\setlist{itemsep=4pt, topsep=0pt, leftmargin=17pt}
\DeclarePairedDelimiter\floor{\lfloor}{\rfloor}
\definecolor{PKU}{cmyk}{0, 1, 1, .45}
\definecolor{BIT}{cmyk}{1, 0, 1, 0}
\newcolumntype{L}{>{$}l<{$}} 
\newcolumntype{R}{>{$}r<{$}} 
\newcolumntype{C}{>{$}c<{$}} 
\crefname{itm}{}{}
\crefname{def}{Def.}{Defs.}
\Crefname{def}{Definition}{Definitions}
\crefname{ineq}{Ineq.}{Ineqs.}
\Crefname{ineq}{Inequality}{Inequalities}
\crefname{step}{Step}{Step}
\newcommand\creflabel[2][\@currentcounter]{%
 \crefalias{\@currentcounter}{#1}\label{#2}}
\newtheorem{theorem}{Theorem}[section]
\newtheorem{lemma}[theorem]{Lemma}
\newtheorem{proposition}[theorem]{Proposition}
\numberwithin{equation}{section}
\tikzset{
vertex/.style={shape=circle, minimum size=0.9mm, ball color=black, inner sep=0.5},
edge/.style={black, very thick},
ball/.style={shape=circle, ball color=black, minimum size=1mm, inner sep=0.5},
ellipsis/.style={shape=circle, fill, inner sep=.5},
range.lr/.style={|{Stealth}-{Stealth}|, thin},
range.l/.style={-{Stealth}|, thin},
range.r/.style={|{Stealth}-, thin},
EEdge/.style={very thick, color=black},
apball/.style={ball color=black}
}
\author[S.Y.M. Gong]{Simon Y.M. Gong}
\address[Simon Y.M. Gong]{School of Mathematics and Statistics, Beijing Institute of Technology, Beijing 102400, P. R. China.}
\email{simon@bit.edu.cn}
\author[D.G.L. Wang]{David G.L. Wang}
\address[David G.L. Wang]{School of Mathematics and Statistics \& MIIT Key Laboratory of Mathematical Theory and Computation in Information Security, Beijing Institute of Technology, Beijing 102400, P. R. China.}
\email{glw@bit.edu.cn}
\author[K. Zhang]{K. Zhang}
\address[K. Zhang]{School of Mathematics and Statistics, Beijing Institute of Technology, Beijing 102400, P.\ R.\ China.}
\email{kai@bit.edu.cn}
\thanks{David Wang is supported by the National Natural Science Foundation of China (Grant No.~12171034).}
\keywords{chromatic symmetric function,
$e_I$-expansion,
$e$-positivity, 
the composition method}
\subjclass[2020]{05E05}
\title{The trinacria graphs $T_{(b+2)b2}$ are $e$-positive}
\begin{document}

\begin{abstract}
In this paper, we identify a new family of $e$-positive graphs, called the trinacria graphs $T_{(b+2)b2}$, thereby providing a partial answer to Stanley's question on which graphs are $e$-positive. The trinacria graph $T_{abc}$ is the graph on $a+b+c+3$ vertices obtained by attaching paths $P_a$, $P_b$ and~$P_c$ to the vertices of a triangle, respectively. Our proof relies on several ad hoc combinatorial ideas, and employs divide-and-conquer techniques, charging arguments, and progressive repair methods.
\end{abstract}
\maketitle
\tableofcontents

\section{Introduction}

In 1995,  \citet{Sta95} introduced the concept of the \emph{chromatic symmetric function} $X_G$ of a graph~$G$, which encodes various types of enumerative information including the number of vertices, edges, and triangles, as well as the girth, the matching polynomial, and more. The positivity problem for symmetric functions has received much attention because of its connections with mathematical physics and representation theory. In this context, \citet{Sta95} reformulated a conjecture of \citet{SS93} as: \emph{The chromatic symmetric function of the incomparability graph of any $(3+1)$-free poset is $e$-positive.} This conjecture has been called \emph{Stanley--Stembridge's conjecture}, and has been one of the most influential conjectures in algebraic combinatorics over the past three decades. It was recently confirmed by \citet{Hik25}. \citet[Section 5]{Sta95} proved that paths and cycles are $e$-positive and posed the following question:
\begin{center}
``\emph{Which $X_G$ is $e$-positive?}''
\end{center}
Our work continues in this direction.

A typical graph family arising in Stanley--Stembridge's conjecture is the \emph{unit interval graphs}, which includes \emph{complete graphs}, \emph{paths}, \emph{lariats}, \emph{lollipops} and \emph{melting lollipops}, \emph{dumbbells}, \emph{generalized bulls}, \emph{$K$-chains}, \emph{triangular ladders}, among others; see \citet{HNY20,GS01,SW16,Tom25-CT,Dv18,Ale21}. \citet{Tom24} obtained a signed $e$-expansion of the chromatic symmetric function of every unit interval graph, thereby implying the $e$-positivity of \emph{melting $K$-chains}.

There is also a substantial collection of $e$-positive graphs that are not unit interval graphs. A large family of such graphs was identified by \citet{TV25X}, who proved that gluing a sequence of unit interval graphs and cycles results in an $e$-positive graph, and that any graph obtained by identifying the first and last vertices of such a sequence is also $e$-positive. Such graphs include \emph{cycles}, \emph{tadpoles}, \emph{hats}, \emph{adjacent cycle chains} (or \emph{hat-chains}), \emph{kayak paddles}, \emph{clique-path-cycle graphs}, \emph{cycle-chords}, and \emph{noncrossing cycle-chords}; see \citet{TV25,QTW25,WZ25,Wang25,TW24X}. Further examples of $e$-positive graphs outside these classes include the \emph{line graphs of tadpoles}, \emph{twinned paths}, \emph{twinned lollipops}, \emph{twinned cycles}, \emph{spiders} of the form $S(b+1,b,1)$ or $S(4m+2,\,2m,\,1)$, and \emph{clocks}; see \citet{BCCCGKKLLS25,TW24X,WW23-JAC,DFv20,TWW24X,CHW26}. The second author~\cite{Wang25} conjectures that all \emph{theta graphs} are $e$-positive.

Some graphs have been shown to be not $e$-positive, or even not Schur positive. A leading conjecture in this direction is due to \citet{DSv20}: \emph{no tree with maximum degree $\Delta\ge4$ is $e$-positive}. It was proved by \citet{Zhe22} for trees with $\Delta\ge 6$, and then by \citet{Tom25-AAM} for trees with $\Delta\ge5$ and spiders with $\Delta=4$. Other graph families that have been shown to be not $e$-positive include \emph{generalized nets}, \emph{saltires} $\mathrm{SA}_{nn}$, \emph{augmented saltires} $\mathrm{AS}_{nn}$ and $\mathrm{AS}_{n(n+1)}$, \emph{triangular towers}~$\mathrm{TT}_{nnn}$, \emph{brooms} except $\mathrm{br}(2,2)$, and \emph{double brooms}; see \citet{DFv20,DSv20,WW23-DAM}. Since the $e$-positivity implies Schur positivity, attention has also been paid to graphs that are not Schur positive; see \citet{Sta98,WW20,LLYZ25,WZ25X}.

In this paper, we study \emph{trinacria graphs} $T_{abc}$, defined as the graph on $a+b+c+3$ vertices obtained by attaching the paths $P_a$, $P_b$, and $P_c$ to the vertices of a triangle, see \cref{fig:trinacria1}.
\begin{figure}[htbp]
\centering
\begin{tikzpicture}[scale=1.5]
\coordinate (a1) at (xyz polar cs:angle=0, radius=.5);
\coordinate (a2) at (xyz polar cs:angle=90, radius=.866);
\coordinate (a3) at (xyz polar cs:angle=180, radius=.5);
\coordinate (b1) at (xyz polar cs:angle=0, radius=1.1);
\coordinate (b2) at (xyz polar cs:angle=0, radius=1.25);
\coordinate (b3) at (xyz polar cs:angle=0, radius=1.4);
\coordinate (c) at (xyz polar cs:angle=0, radius=2);
\coordinate (d) at (0+1.5, .866);
\coordinate (f) at (-2, 0);
\coordinate (e1) at (0.6, .866);
\coordinate (e2) at (0.75, .866);
\coordinate (e3) at (0.9, .866);
\coordinate (g1) at (-1.4, 0);
\coordinate (g2) at (-1.25, 0);
\coordinate (g3) at (-1.1, 0);
\draw[edge] (a1) -- (a2) -- (a3) -- (a1);
\draw[edge] (a1) -- (0.9, 0);
\draw[edge] (a2) -- (0.4, .866);
\draw[edge] (a3) -- (-0.9, 0);
\draw[edge] (d) -- (1.1, .866);
\draw[edge] (c) -- (1.6, 0);
\draw[edge] (f) -- (-1.6, 0);
\foreach \e in {a1, a2, a3, c, d, f}
  \shade[ball](\e)  circle (.035cm);
\foreach \f in {b1, b2, b3, e1, e2, e3, g1, g2, g3}
  \filldraw[ellipsis] (\f) circle (.015cm);
\draw (-1.25, -.3) node {$P_a$};
\draw (1.3, -.3) node {$P_b$};
\draw (0.9, -.4+1) node {$P_c$};
\end{tikzpicture}
\caption{The trinacria graph $T_{abc}$.}
\label{fig:trinacria1}
\end{figure}
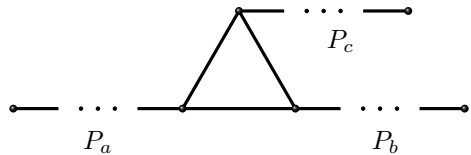
In particular, the graphs~$T_{ab0}$ is the hat graph $H_{a3b}$, and the graph ~$T_{a00}$ is the lariat graph $L_{a+3}$; see \cite{WZ25,Dv18}. The trinacria $T_{111}$, known as the \emph{net}, is the smallest claw-free graph that is not $e$-positive, and hence cannot be the incomparability graph of any $(3+1)$-free poset; see \cite{Sta95}. Consequently, no trinacria $T_{abc}$ is the incomparability graph of a $(3+1)$-free poset, since each contains the net as an induced subgraph. On the other hand, trinacria graphs are special cases of \emph{suns} and \emph{generalized spiders}, which have been investigated by \citet{sV25,FKKMMT20}. \Citet[Theorem~3.14]{MS25} established that $T_{abc}$ is not $e$-positive when the lengths $a$, $b$, and~$c$ satisfy the triangle inequality. In contrast, we prove the following $e$-positivity result:
\begin{theorem}\label{thm:e+.trinacria1}
Every trinacria graph of the form $T_{(b+2)b2}$ is $e$-positive.
\end{theorem}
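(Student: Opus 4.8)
The plan is to establish \cref{thm:e+.trinacria1} by induction on~$b$, with the composition method and the $e_I$-expansion of the chromatic symmetric function as the main tools. The point that makes the induction feasible is that the family $\{T_{(b+2)b2}\}_{b\ge 0}$ is essentially self-contained: shortening the two longer legs of $T_{(b+2)b2}$ by one vertex each produces $T_{(b+1)(b-1)2}$, which is again a member of the family (with parameter $b-1$, since $(b+1)=(b-1)+2$). So it should suffice to treat one or two small base cases directly --- in particular $T_{202}\cong H_{232}$, which is $e$-positive by \citet{WZ25} --- and then to run a recursion that decreases~$b$. The recursion itself can be obtained from the standard linear relations for chromatic symmetric functions under local edge modifications (triple-deletion and modular-type identities), applied at an edge of the triangle and then repeatedly along the pendant paths, or equivalently read off from a combinatorial model for the signed $e_I$-expansion; either way it expresses $X_{T_{(b+2)b2}}$ as an integer combination of $X_H$ in which each~$H$ is either a trinacria of the same family with smaller~$b$ (covered by the inductive hypothesis), a hat graph $T_{ac0}=H_{a3c}$ with $a+c<2b+2$ (covered by \citet{WZ25}), a disjoint union of paths (covered by \citet{Sta95}), or --- possibly --- an intermediate tree-like graph (a spider) that must be carried along in the induction.

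The crux is that the pieces of this recursion enter with \emph{mixed} signs, so knowing each piece is $e$-positive is by itself useless: a relation of the form $X_{T_{(b+2)b2}}=A-B+C-\cdots$ with every summand $e$-positive yields no information. The substantive work is therefore the $e_I$-bookkeeping at the heart of the composition method: write $X_{T_{(b+2)b2}}=\sum_I c_I\,e_I$, with the composition~$I$ recording how a proper coloring is distributed over the triangle and the three legs; convert the recursion into a system of recurrences for the coefficients $c_I$ in~$b$; and prove by induction that each genuine coefficient $c_\lambda=\sum_{\lambda(I)=\lambda}c_I$ is nonnegative. Since $c_\lambda$ is visibly \emph{not} a sum of nonnegative terms, this is where the charging and progressive-repair arguments enter. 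Concretely, one constructs an explicit injection matching each negative contribution (arising at some level of the recursion, at some leg) to a positive contribution --- drawn from the inductive hypothesis for the smaller trinacria, from the hat-graph coefficients, or from the path terms --- and performs this matching incrementally, in divide-and-conquer fashion over the three legs and the successive recursion steps, repairing one problematic coefficient at a time (possibly introducing a smaller deficit elsewhere, with a monotone quantity guaranteeing termination) rather than through a single global estimate.

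The main obstacle I anticipate is precisely this charging argument, and the reason is structural. The legs $(b+2,\,b,\,2)$ form a \emph{degenerate} triangle, $b+2=b+2$; equivalently, $T_{(b+2)b2}$ sits exactly on the boundary of the family for which \citet[Theorem~3.14]{MS25} proves \emph{non}-$e$-positivity (legs satisfying the triangle inequality). So the $e_\lambda$-coefficients carry almost no slack --- many are small and some vanish --- which forces the injection to be essentially optimal: it cannot afford to waste any positive contribution, and so it must exploit the precise arithmetic of the legs (the particular role of the ``$+2$'' on the long leg, and of the short leg having length exactly~$2$). This is presumably why the proof is assembled from several ad hoc combinatorial devices rather than from one uniform inequality, and why some care is needed in setting up the induction --- likely on a slightly enriched statement, e.g.\ about leg-weighted or ``pointed'' versions of $X_{T_{(b+2)b2}}$ --- so that every auxiliary quantity produced by the recursion is under control and the induction closes.
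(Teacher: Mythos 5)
There is a genuine gap at the heart of your proposal: the inductive recursion you rely on is never constructed, and the standard tools you invoke do not produce it. Triple-deletion (\cref{prop:3del}) relates graphs on the \emph{same} vertex set that differ in which edges of a stable triple are present; it does not shorten pendant paths, and there is no identity of the claimed form expressing $X_{T_{(b+2)b2}}$ as an integer combination of $X_{T_{(b+1)(b-1)2}}$, hat graphs, and path unions. What triple-deletion together with \cref{prop:X.spider.abc} actually yields (this is \cref{prop:rec:3sun}) is a \emph{closed-form} signed expression for $X_{T_{abc}}$ in terms of convolutions $X_{P_i}X_{P_{n-i}}$ of path chromatic symmetric functions --- no smaller trinacria appears, and no induction on $b$ is involved. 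Your plan therefore rests on an unproved (and, as far as the known linear relations go, unavailable) identity, and the subsequent description of the ``$e_I$-bookkeeping'' --- constructing an injection from negative to positive contributions across recursion levels --- restates the goal rather than supplying an argument; without the explicit coefficient formulas there is nothing to charge against.

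For contrast, the paper works uniformly in $b$: it substitutes the Shareshian--Wachs weights (\cref{prop:X.path}) into the path-convolution formula to get an explicit signed $e_I$-expansion, splits it as $X_{T_{(b+2)b2}}=Y_2e_1^2+Y_1e_1+Y_0$ according to the power of $e_1$ (\cref{prop:X.3sun:b+2.b.2}), and then proves each $Y_i$ is $e$-positive by concrete composition-level manipulations --- e.g.\ the rotation $\varphi(I\!J)=J\!I$ pairing compositions with the same underlying partition, together with uniform estimates such as $1<r_i\le 2$ --- so that all negative coefficients are absorbed within a single fixed expansion. Your instinct that charging and progressive repair are needed, and that the degeneracy $b+2=b+2$ leaves little slack (making the result sharp against \citet{MS25}), is correct in spirit; but those devices operate in the paper on the explicit $w_I$-coefficients of a closed-form expansion, not on an induction that your proposal does not actually set up.
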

As an immediate corollary, this shows that \citeauthor{MS25}'s result is sharp.

Several methods for proving the $e$-positivity of a graph have been developed. Notable examples include the generating function method of \citet{Sta95}, the set partition method of \citet{GS01} for $(e)$-positivity, the basis transformation method of \citet{LY21}, the composition method of \citet{WZ25}, the forest triple method of \citet{Tom24}, and the Young tableau method of \citet{Hik25}. In this paper, we prove \cref{thm:e+.trinacria1} using the composition method. A key advantage of the composition method lies in establishing the $e$-positivity of a symmetric function: for any $a>b>0$ and compositions $K$ and $H$ with the same underlying partition, we have $ae_K-be_H=(a-b)e_K$. 
This combination facilitates the construction of a positive $e_I$-expansion of a symmetric function, thereby proving its $e$-positivity. On the other hand, an application of the composition method often requires additional care for combining terms in an $e_I$-expansion that share the same underlying partition. Our proof relies on several ad hoc combinatorial ideas, and employs \emph{divide-and-conquer} techniques, \emph{charging arguments}, and \emph{progressive repair methods}. Details are provided in \cref{sec:pf}.

This paper is organized as follows. In \cref{sec:pre}, we recall necessary concepts and known results. In \cref{sec:pf}, we provide a proof of \cref{thm:e+.trinacria1}. We write $X_{T_{(b+2)b2}}=Y_2e_1^2+Y_1e_1+Y_0$, where each symmetric function $Y_i$ is free of a factor $e_1$, and prove that each symmetric function $Y_i$ is $e$-positive. For $Y_2$ and $Y_1$, we apply charging arguments; for $Y_0$, we employ a progressive repair method, in which each round addresses the remaining negative contributions.

\section{Preliminaries}\label{sec:pre}
This section provides necessary background on chromatic symmetric functions. We adopt the terminology system from \citet{GKLLRT95} and \citet{Sta11B}. Let $n$ be a positive integer. Denote $[n]=\{1,2,\dots,n\}$. A \emph{composition} of $n$ is a sequence of positive integers whose sum is~$n$, denoted by $I=i_1 \dotsm i_z\vDash n$, with \emph{size} $\abs{I}=n$, \emph{length} $\ell(I)=z$, and \emph{parts} $i_1,\dots,i_z$. When all parts $i_k$ are equal to a common value $i$, we write $I=i^z$. When a capital letter like $I$ or~$J$ stands for a composition, its small letter counterpart with integer subscripts refers to the parts. A \emph{partition}\index{partition} of~$n$ is a multiset of positive integers $\lambda_i$ whose sum is~$n$, denoted $\lambda=\lambda_1\lambda_2\dotsm\vdash n$, where $\lambda_1\ge \lambda_2\ge\dotsm\ge 1$.

A \emph{symmetric function} of homogeneous degree $n$ over the field $\mathbb{Q}$ of rational numbers is a formal power series
\[
f(x_1, x_2, \dots)
=\sum_{\lambda=\lambda_1 \lambda_2 \dotsm \vdash n}
c_\lambda \cdotp x_1^{\lambda_1} x_2^{\lambda_2} \dotsm
\]
such that $f(x_1, x_2, \dots)=f(x_{\pi(1)}, x_{\pi(2)}, \dots)$ for any permutation $\pi$. Let $\operatorname{Sym}^0=\mathbb{Q}$, and let $\operatorname{Sym}^n$ be the vector space of homogeneous symmetric functions of degree $n$ over~$\mathbb{Q}$. One basis of $\operatorname{Sym}^n$ consists of the elementary symmetric functions $e_\lambda$ for all partitions $\lambda\vdash n$, where
\[
e_\lambda=e_{\lambda_1}e_{\lambda_2}\dotsm
\quad\text{and}\quad
e_k=\sum_{1\le i_1<\dots<i_k} x_{i_1} \dotsm x_{i_k}.
\]
A symmetric function $f\in\mathrm{Sym}$ is said to be \emph{$e$-positive} if every $e_\lambda$-coefficient of $f$ is nonnegative. \Citet{Sta95} introduced the \emph{chromatic symmetric function} for a graph $G=(V,E)$ as
\[
X_G
=\sum_{\kappa\colon V\to\{1,2,\dots\}}
\prod_{v\in V} x_{\kappa(v)},
\]
where $\kappa$ runs over proper colorings of~$G$. 

\citet[Theorem 3.1, Corollaries 3.2 and 3.3]{OS14} established the \emph{triple-deletion} property for chromatic symmetric functions, which is analogous to the \emph{deletion-contraction} recursion for Birkhoff's \emph{chromatic polynomials}; see \citet{DKT05B}. It is widely used in reducing the computation of chromatic symmetric functions to that of smaller graphs; see \cref{prop:X.spider.abc} for example.

\begin{proposition}[\citeauthor{OS14}]\label{prop:3del}
Let $G$ be a graph with a stable set $T$ of order $3$. Denote by $e^1$, $e^2$ and $e^3$ the edges linking the vertices in $T$. For any set $S\subseteq \{1,2,3\}$, denote by $G_S$ the graph with vertex set~$V(G)$ and edge set $E(G)\cup\{e^j\colon j\in S\}$. Then 
\[
X_{G_{12}}=X_{G_1}+X_{G_{23}}-X_{G_3}
\quad\text{and}\quad
X_{G_{123}}=X_{G_{13}}+X_{G_{23}}-X_{G_3}.
\]
\end{proposition}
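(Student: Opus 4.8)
The plan is to prove both identities by a direct sorting of proper colorings. Recall that $X_H$ is the sum of the monomials $\prod_{x\in V(H)}x_{\kappa(x)}$ over all proper colorings $\kappa$ of $H$, and that passing from a graph $H$ to $H+ab$ (for $ab\notin E(H)$) simply discards those colorings with $\kappa(a)=\kappa(b)$ while leaving every surviving monomial unchanged. Since $T=\{u,v,w\}$ is a stable set of $G$, none of $e^1,e^2,e^3$ lies in $E(G)$; I would label them $e^1=vw$, $e^2=uw$, $e^3=uv$. Then for each $S\subseteq\{1,2,3\}$ the graph $G_S$ is simple, and its proper colorings are exactly the proper colorings of $G$ that assign distinct colors to the two endpoints of $e^j$ for every $j\in S$.

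The key step is to partition the proper colorings of $G$ into the five classes determined by the set partition of $T$ induced by the color classes: the class $\mathcal A$ in which $\kappa(u),\kappa(v),\kappa(w)$ are pairwise distinct; for $j\in\{1,2,3\}$, the class $\mathcal B_j$ in which precisely the two endpoints of $e^j$ share a color; and the class $\mathcal C$ in which $\kappa(u)=\kappa(v)=\kappa(w)$. Writing $a,b_1,b_2,b_3,c$ for the generating functions of these classes, a coloring in $\mathcal B_j$ violates the edge $e^j$ and no other $e^i$, while a coloring in $\mathcal C$ violates all three. Hence, for nonempty $S$, the proper colorings of $G_S$ are exactly those lying in $\mathcal A$ together with the classes $\mathcal B_j$ for $j\notin S$, which gives
\[
X_{G_1}=a+b_2+b_3,\qquad X_{G_3}=a+b_1+b_2,\qquad X_{G_{12}}=a+b_3,
\]
\[
X_{G_{13}}=a+b_2,\qquad X_{G_{23}}=a+b_1,\qquad X_{G_{123}}=a.
\]

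The two identities then follow by cancellation:
\[
X_{G_1}+X_{G_{23}}-X_{G_3}=(a+b_2+b_3)+(a+b_1)-(a+b_1+b_2)=a+b_3=X_{G_{12}},
\]
\[
X_{G_{13}}+X_{G_{23}}-X_{G_3}=(a+b_2)+(a+b_1)-(a+b_1+b_2)=a=X_{G_{123}}.
\]
I do not expect a genuine obstacle here; the one point meriting care is the observation that adding an edge to a graph only prunes proper colorings and never alters a monomial, so that each $X_{G_S}$ really is the indicated sub-sum of the coloring generating function of $G$. (Equivalently, one may run inclusion--exclusion over which of $e^1,e^2,e^3$ are monochromatic, using that the endpoints of two of them are simultaneously monochromatic only when all of $T$ is monochromatic; this produces the same five-term bookkeeping.)
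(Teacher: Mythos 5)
Your proof is correct. Note that the paper itself offers no proof of this proposition --- it is quoted from \citet{OS14} as a known result --- so there is no internal argument to compare against. Your five-class bookkeeping (partitioning the proper colorings of $G$ by the coincidence pattern of $\kappa$ on $T$, using that two of the three pairs cannot be simultaneously monochromatic unless all of $T$ is, so only the classes $\mathcal A$, $\mathcal B_1$, $\mathcal B_2$, $\mathcal B_3$, $\mathcal C$ occur) is a sound, self-contained verification, and the cancellation you carry out is exactly the standard inclusion--exclusion argument underlying the original triple-deletion result.
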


For any composition $I$, we define $e_I=e_{\rho(I)}$, where $\rho(I)$ is the underlying partition of $I$. An \emph{$e_I$-expansion} of a symmetric function $f\in\mathrm{Sym}^n$ is an expression
\[
f=\sum_{I\vDash n} c_I e_I.
\]
We call it a \emph{positive $e_I$-expansion} of $f$ if $c_I\ge 0$ for all $I$. \citet[Table~1]{SW16} discovered a positive $e_I$-expansion for paths $P_n$.

\begin{proposition}[\citeauthor{SW16}]\label{prop:X.path}
We have $X_{P_n}=\sum_{I\vDash n}w_Ie_I$, where 
\[
w_I=i_1(i_2-1)(i_3-1)\dotsm(i_z-1)
\quad\text{for $I=i_1i_2\dotsm i_z$}.
\]
\end{proposition}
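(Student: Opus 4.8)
The plan is to route through the power-sum basis and a generating-function identity rather than to argue directly on proper colorings. The first step is to invoke Stanley's power-sum expansion of the chromatic symmetric function~\citep{Sta95}: for a graph $G=(V,E)$ one has $X_G=\sum_{S\subseteq E}(-1)^{\abs{S}}p_{\lambda(S)}$, where $\lambda(S)$ records the sizes of the connected components of the spanning subgraph $(V,S)$. For $G=P_n$, whose $n-1$ edges lie consecutively along the path, a subset $S$ is determined by its complementary set of ``gap'' edges, and deleting the gaps cuts $P_n$ into consecutive blocks; reading the block sizes in order gives a composition $I\vDash n$ with $\ell(I)=n-\abs{S}$, and this correspondence is a bijection. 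Hence
\[
X_{P_n}=\sum_{I\vDash n}(-1)^{\,n-\ell(I)}\,p_{\rho(I)}.
\]

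Next I would assemble the generating function $F(t)=\sum_{n\ge0}X_{P_n}t^n$, with the convention $X_{P_0}=1$. Since a composition is an ordered tuple of parts and $(-1)^{\,n-\ell(I)}=\prod_k(-1)^{\,i_k-1}$, the displayed formula factors as a geometric series, $F(t)=\bigl(1-\sum_{k\ge1}(-1)^{\,k-1}p_kt^k\bigr)^{-1}$. Writing $E(t)=\sum_{n\ge0}e_nt^n$, the classical identity $tE'(t)/E(t)=\sum_{k\ge1}(-1)^{\,k-1}p_kt^k$ rewrites the denominator, and since $E(t)-tE'(t)=1-\sum_{k\ge1}(k-1)e_kt^k$ we obtain
\[
F(t)=\frac{E(t)}{E(t)-tE'(t)}=E(t)\sum_{z\ge0}\Bigl(\sum_{k\ge1}(k-1)e_kt^k\Bigr)^{\!z}.
\]

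The last step is to extract $[t^n]F(t)$. Expanding the geometric series and multiplying by $E(t)=\sum_{i_0\ge0}e_{i_0}t^{i_0}$, the coefficient of $t^n$ is the sum, over tuples $(i_0;i_1,\dots,i_z)$ with $i_0\ge0$, $z\ge0$, each $i_j\ge1$ for $j\ge1$, and $i_0+\dots+i_z=n$, of the term $e_{i_0}\prod_{j\ge1}(i_j-1)\,e_{i_j}$. As $e_0=1$, such a tuple carries weight $\prod_{j\ge1}(i_j-1)$ and, after reading off the $e$-monomial, contributes to the composition $i_1\cdots i_z$ when $i_0=0$ and to the composition $i_0i_1\cdots i_z$ when $i_0\ge1$. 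Fixing a target composition $J=j_1\cdots j_m\vDash n$ with $n\ge1$, exactly two tuples land on it: $(j_1;j_2,\dots,j_m)$, of weight $\prod_{k=2}^{m}(j_k-1)$, and $(0;j_1,\dots,j_m)$, of weight $(j_1-1)\prod_{k=2}^{m}(j_k-1)$. Adding these and using $1+(j_1-1)=j_1$ gives coefficient $j_1\prod_{k=2}^{m}(j_k-1)=w_J$, so $[t^n]F(t)=\sum_{J\vDash n}w_Je_J$, as claimed.

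I expect the only genuinely delicate point to be this last piece of bookkeeping --- checking that each composition is hit by precisely the two indicated tuples, with no stray contributions, and that the degenerate cases ($m=1$, or $n=0$ with the empty composition) behave correctly. Everything before it is a routine manipulation of formal power series in $t$ over $\operatorname{Sym}$, legitimate because every series involved has vanishing constant term. A convenient feature of this route is that it never needs to merge two compositions with a common underlying partition: the expansion emerges already in composition-indexed form. One could instead reformulate the identity $F(t)\bigl(1-\sum_{k\ge1}(k-1)e_kt^k\bigr)=E(t)$ as the recursion $X_{P_n}=n\,e_n+\sum_{k=2}^{n-1}(k-1)e_kX_{P_{n-k}}$ and induct on $n$, but that approach does force the partition-merging step, so the generating-function computation seems cleanest.
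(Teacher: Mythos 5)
Your proof is correct. Note that the paper itself offers no argument for this proposition---it is quoted from Shareshian--Wachs \citep{SW16} as a known result---so there is no internal proof to compare against; what you have written is a self-contained derivation of the cited formula. Your route is the standard one and all the steps check out: Stanley's power-sum expansion $X_G=\sum_{S\subseteq E}(-1)^{\abs{S}}p_{\lambda(S)}$ specializes for $P_n$ to $\sum_{I\vDash n}(-1)^{n-\ell(I)}p_{\rho(I)}$ via the blocks-of-a-path bijection (with $\abs{S}=n-\ell(I)$, so the sign factors over parts as you say); the geometric-series packaging, the identity $tE'(t)/E(t)=\sum_{k\ge1}(-1)^{k-1}p_kt^k$ (from $\log E(t)=\sum_{k\ge1}(-1)^{k-1}p_kt^k/k$), and the rewriting $E(t)-tE'(t)=1-\sum_{k\ge1}(k-1)e_kt^k$ are all legitimate formal-power-series manipulations; and the final bookkeeping is right: each composition $J=j_1\cdots j_m$ receives exactly the two contributions $(j_1;j_2,\dots,j_m)$ and $(0;j_1,\dots,j_m)$, giving $1+(j_1-1)=j_1$ times $\prod_{k\ge2}(j_k-1)$, i.e.\ $w_J$, with the cases $m=1$ and $n=0$ behaving as you claim (and terms with a part $1$ in a weighted slot vanishing harmlessly). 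Your closing remark is also accurate: the equivalent recursion $X_{P_n}=ne_n+\sum_{k=2}^{n-1}(k-1)e_kX_{P_{n-k}}$ would prove the same statement by induction, and the generating-function extraction indeed delivers the expansion already indexed by compositions, which is exactly the form the rest of the paper uses.
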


A \emph{spider} is a tree formed by several paths that share a common endpoint. More precisely, for any partition $\lambda=\lambda_1\dotsm\lambda_d\vdash n-1$, the spider $S(\lambda)$ is the $n$-vertex tree consisting of the paths $P_{1+\lambda_1}$, $\dots$, $P_{1+\lambda_d}$ which all meet at a single vertex of degree $d$. \Citet[Lemma 4.4]{Zhe22} expressed the chromatic symmetric function of $S(abc)$ in terms of partial convolutions of the chromatic symmetric functions of paths, using \cref{prop:3del}.

\begin{proposition}[\citeauthor{Zhe22}]\label{prop:X.spider.abc}
If $abc\vdash n-1$, then 
\[
X_{S(abc)}
=X_{P_n}+\sum_{i=1}^c \brk1{X_{P_i}X_{P_{n-i}}-X_{P_{b+i}} X_{P_{n-b-i}}}.
\]
\end{proposition}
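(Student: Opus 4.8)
The plan is to derive the formula by a single application of the triple-deletion property (\cref{prop:3del}) followed by induction on the length $c$ of a shortest leg.

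Set up $S(abc)$ with center $u$ and legs $u\sim a_1\sim\dots\sim a_a$, $u\sim b_1\sim\dots\sim b_b$, $u\sim c_1\sim\dots\sim c_c$, and assume $c\ge 1$. Apply \cref{prop:3del} with $G=S(abc)\setminus\{b_1u,\,uc_1\}$ and the stable triple $T=\{b_1,u,c_1\}$, taking $e^1=b_1u$, $e^2=uc_1$, $e^3=b_1c_1$. Then $G_{12}=S(abc)$; the graphs $G_1=P_{a+b+1}\sqcup P_c$ and $G_3=P_{a+1}\sqcup P_{b+c}$ are disjoint unions of two paths, where $P_{a+1}$ denotes the component on $\{u,a_1,\dots,a_a\}$; and $G_{23}=S(a+1,b,c-1)$ is a smaller spider, obtained by promoting $c_1$ to a degree-three vertex (when $c=1$ this degenerates to the path $P_{a+b+2}=S(a+1,b,0)$). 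Since $X$ is multiplicative over disjoint unions, the identity $X_{G_{12}}=X_{G_1}+X_{G_{23}}-X_{G_3}$ gives the recursion
\[
X_{S(abc)}=X_{P_{a+b+1}}X_{P_c}-X_{P_{a+1}}X_{P_{b+c}}+X_{S(a+1,\,b,\,c-1)}.
\]

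Now induct on $c$. The base case $c=0$ is immediate, since $S(ab0)=P_{a+b+1}=P_n$ and the asserted sum is empty. For $c\ge 1$, feed the recursion into the inductive hypothesis applied to $S(a+1,b,c-1)$: the move $(a,b,c)\mapsto(a+1,b,c-1)$ just shifts one vertex from leg $c$ to leg $a$, so this spider has the same value $n=a+b+c+1$ and the same shift $b$, only the upper summation limit dropping from $c$ to $c-1$. Using $a+b+1=n-c$ and $a+1=n-b-c$, the two extra terms $X_{P_{a+b+1}}X_{P_c}$ and $-X_{P_{a+1}}X_{P_{b+c}}$ are exactly the $i=c$ summands of $\sum_{i=1}^{c}X_{P_i}X_{P_{n-i}}$ and of $-\sum_{i=1}^{c}X_{P_{b+i}}X_{P_{n-b-i}}$, which closes the induction.

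I do not anticipate a genuine obstacle here; the argument is bookkeeping once the stable triple is chosen correctly, and that choice is the one point that matters. Taking $T=\{b_1,u,c_1\}$ is precisely what makes the recursion shorten leg $c$ while keeping the shift in the subtracted convolution equal to $b$, so that iterating telescopes directly into the claimed sum; a different triple would reproduce the formula only after relabeling the legs. The remaining care lies in reading off $G_1$, $G_3$, and in particular $G_{23}=S(a+1,b,c-1)$, correctly, and in the index bookkeeping of the telescoping.
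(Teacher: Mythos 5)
Your proof is correct. The paper does not reproduce a proof of this proposition—it cites it from \citet[Lemma 4.4]{Zhe22} and notes only that it is obtained via the triple-deletion property (\cref{prop:3del})—and that is exactly the route you take: the stable triple $\{b_1,u,c_1\}$ with $e^1=b_1u$, $e^2=uc_1$, $e^3=b_1c_1$ correctly yields $G_1=P_{a+b+1}\sqcup P_c$, $G_3=P_{a+1}\sqcup P_{b+c}$, $G_{23}=S(a+1,b,c-1)$, and the telescoping induction on $c$ (which stays within partitions since $a\ge b\ge c$ gives $a+1\ge b\ge c-1$) closes without issue.
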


We consider the free associative algebra $\mathcal F=\bigoplus_{n\ge 0}\mathcal V_n$, where $\mathcal V_n=\mathbb Q\brk[a]{e_I\colon I\vDash n}$ and $e_Ie_J=e_{I\!J}$. Then the vector space $\mathrm{Sym}^n$ is embeddable into $\mathcal V_n$ naturally since any partition can be considered as a composition with parts  from large to small. For any element $f\in\mathcal F$ and any composition~$K$, we use the notation $f\bigl|_K$ to denote the coefficient of~$K$ in~$F$. For any set $\mathcal C$ of compositions, denote 
\[
f\big|_{\mathcal C}=\sum_{K\in\mathcal C}f\big|_K.
\]
We often consider $f\in\mathcal F$ as a symmetric function by treating $e_I$ as $e_{\rho(I)}$. Denote by $[e_\lambda]f$ the $e_\lambda$-coefficient of $f$, for any partition $\lambda$. For instance, for $f=ae_{211}+be_{121}+ce_{13}\in\mathcal V_4$, 
\[
f\bigl|_{211}=a
\quad\text{and}\quad
[e_{211}]f=f\big|_{\{211, \,121,\,112\}}=a+b.
\]

\section{Proof of the $e$-positivity of $T_{(b+2)b2}$}\label{sec:pf}

This section consists of a proof of \cref{thm:e+.trinacria1}. First of all, we express $X_{T_{abc}}$ in terms of partial convolutions of chromatic symmetric functions of paths.

\begin{proposition}\label{prop:rec:3sun}
For any $a\ge b\ge c\ge 1$, 
\begin{equation}\label{Tabc:path}
X_{T_{abc}}
=2\mathscr P_0
+\sum_{i=1}^{c+1}(\mathscr P_i-\mathscr P_{b+i})
+\sum_{i=1}^c(\mathscr P_i-\mathscr P_{b+i+1})
-\mathscr P_{a+1},
\end{equation}
where $\mathscr P_i=X_{P_i}X_{P_{n-i}}$.
\end{proposition}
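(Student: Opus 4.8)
The plan is to derive \eqref{Tabc:path} by applying the triple-deletion property (\cref{prop:3del}) once, and then feeding the resulting spiders into Zheng's formula (\cref{prop:X.spider.abc}). Label the triangle of $T_{abc}$ by $u_1u_2u_3$ so that $P_a$, $P_b$, $P_c$ are attached at $u_1$, $u_2$, $u_3$ respectively, and write the attached paths as $v_1v_2\cdots v_a$ (with $v_1u_1\in E$), $w_1w_2\cdots w_b$ (with $w_1u_2\in E$), and $z_1z_2\cdots z_c$ (with $z_1u_3\in E$). Let $G$ be obtained from $T_{abc}$ by deleting the three triangle edges. Since those are the only edges of $T_{abc}$ inside $\{u_1,u_2,u_3\}$, the set $T=\{u_1,u_2,u_3\}$ is a stable set of order $3$ in $G$; taking $e^1=u_2u_3$, $e^2=u_1u_3$, $e^3=u_1u_2$ in \cref{prop:3del} we have $T_{abc}=G_{123}$, hence $X_{T_{abc}}=X_{G_{13}}+X_{G_{23}}-X_{G_3}$. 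Identifying the three graphs on the right is the heart of the argument.

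I would next describe $G_{13}$, $G_{23}$, $G_3$ explicitly. In $G_{13}$ the vertex $u_2$ has degree $3$, and the three pendant paths hanging from it are $u_1v_1\cdots v_a$, $u_3z_1\cdots z_c$ and $w_1\cdots w_b$, with $a+1$, $c+1$ and $b$ vertices; thus $G_{13}$ is the spider $S(\lambda)$ whose legs have these lengths, a graph on $n=a+b+c+3$ vertices. Likewise $G_{23}$ is the spider centred at $u_1$ whose legs $v_1\cdots v_a$, $u_3z_1\cdots z_c$, $u_2w_1\cdots w_b$ have $a$, $c+1$, $b+1$ vertices, again on $n$ vertices. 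Finally $G_3$ is the disjoint union of the path $v_a\cdots v_1u_1u_2w_1\cdots w_b$ on $a+b+2$ vertices and the path $u_3z_1\cdots z_c$ on $c+1$ vertices, so by multiplicativity of $X$ over disjoint unions, $X_{G_3}=X_{P_{a+b+2}}X_{P_{c+1}}=\mathscr P_{c+1}$ since $n-(c+1)=a+b+2$.

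Now apply \cref{prop:X.spider.abc}, which is symmetric in the three legs. For $G_{13}$, taking the ``middle'' leg (the one whose length enters the inner shift) to be the one with $b$ vertices and the ``last'' leg (the one bounding the summation) to be the one with $c+1$ vertices gives $X_{G_{13}}=\mathscr P_0+\sum_{i=1}^{c+1}(\mathscr P_i-\mathscr P_{b+i})$, where $\mathscr P_0=X_{P_0}X_{P_n}=X_{P_n}$. For $G_{23}$, taking the ``middle'' leg to be the one with $b+1$ vertices and the ``last'' leg to be the one with $c+1$ vertices gives $X_{G_{23}}=\mathscr P_0+\sum_{i=1}^{c+1}(\mathscr P_i-\mathscr P_{b+1+i})$. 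Splitting off the $i=c+1$ term of this last sum and using $\mathscr P_{b+c+2}=X_{P_{b+c+2}}X_{P_{a+1}}=\mathscr P_{a+1}$ (because $n-(b+c+2)=a+1$), I rewrite $X_{G_{23}}=\mathscr P_0+\sum_{i=1}^{c}(\mathscr P_i-\mathscr P_{b+i+1})+\mathscr P_{c+1}-\mathscr P_{a+1}$.

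Substituting these three expressions into $X_{T_{abc}}=X_{G_{13}}+X_{G_{23}}-X_{G_3}$, the two occurrences of $\mathscr P_{c+1}$ cancel and what remains is precisely the right-hand side of \eqref{Tabc:path}. The computation is routine once the set-up is in place; the only points demanding care are the correct identification of $G_{13}$ and $G_{23}$ as spiders, the choice of which leg plays the ``middle'' role in \cref{prop:X.spider.abc} so that the re-indexing lands exactly on the stated sums, and the two elementary simplifications $\mathscr P_0=X_{P_n}$ and $\mathscr P_{b+c+2}=\mathscr P_{a+1}$. I do not expect a genuine obstacle here: this proposition is the combinatorial preamble to the substantive positivity estimates that occupy the rest of \cref{sec:pf}.
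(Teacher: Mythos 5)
Your argument is correct and follows essentially the same route as the paper: a single application of \cref{prop:3del} to the triangle, followed by \cref{prop:X.spider.abc} for the two resulting spiders and multiplicativity over components for the third graph. The only difference is your choice of which triangle edge plays the role of $e^3$, which yields the intermediate decomposition $X_{S(a+1,\,b,\,c+1)}+X_{S(a,\,b+1,\,c+1)}-\mathscr P_{c+1}$ rather than the paper's $X_{S(a+1,\,b,\,c+1)}+X_{S(a+1,\,b+1,\,c)}-\mathscr P_{a+1}$; both reduce to \eqref{Tabc:path}, and your appeal to the symmetry of \cref{prop:X.spider.abc} in the three legs (needed because, e.g., $b\ge c+1$ may fail when $b=c$) is legitimate since it follows from $\mathscr P_j=\mathscr P_{n-j}$, though it would be worth stating that one-line verification explicitly.
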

\begin{proof}
Let $G=T_{abc}$ and $n=a+b+c+3$. Applying \cref{prop:3del}, we obtain
\[
X_G=X_{S(a+1,\,b,\,c+1)}+X_{S(a+1,\,b+1,\,c)}-\mathscr P_{a+1}.
\]
Using \cref{prop:X.spider.abc}, it is routine to derive the desired formula.
\end{proof}

Second, we produce an $e_I$-expansion for the chromatic symmetric function of the trinacria $T_{(b+2)b2}$. Denote by $\mathcal W_n$ the set of compositions without parts $1$, namely,
\[
\mathcal W_n=\{I\vDash n\colon i_1,i_2,\dots\ge 2\}.
\]
Let $\mathcal W_n(p)$ be the set of compositions in $\mathcal W_n$ with a prefix of size~$p$. The \emph{indicator function} $\chi(\cdot)$ is defined for propositions $P$ by 
\[
\chi(P)=\begin{dcases*}
1,& if $P$ is true,\\
0,& otherwise.
\end{dcases*}
\]

\begin{proposition}\label{prop:X.3sun:b+2.b.2}
We have $X_{T_{(b+2)b2}}=Y_2e_1^2+Y_1e_1+Y_0$, where
\begin{align}
\creflabel[def]{def:Y2}
Y_2&=\sum_{K\in\mathcal W_{n-2}}\brk1{2+\chi(k_1=2)} w_{1K} e_K
-\sum_{l=1}^3\sum_{K\in\mathcal W_{2b+5}(b+6-l)} l\cdot w_{1K} e_K,\\
\creflabel[def]{def:Y1}
Y_1&=Y_{11}+Y_{12}+Y_{13}
-\sum_{l=1}^6\sum_{K\in\mathcal W_{2b+6}(b+l)}\min(l,\,7-l) w_K e_K, 
\quad\text{and}\\
\creflabel[def]{def:Y0}
Y_0&=\sum_{K\in\mathcal W_{2b+7}} f(K) e_K\prod_{i\ge 1}(k_i-1).
\end{align}
Here
\begin{align*}
Y_{11}&=\sum_{K\in\mathcal W_{2b+6}}(2w_K+w_{2K})e_K, 
\quad
Y_{12}=\sum_{I\in\mathcal W_{2b+4}}(w_{4I}+w_I)e_{2I}, 
\quad
Y_{13}=\sum_{I\in\mathcal W_{2b+3}} w_{3I} e_{3I},
\end{align*}
and for $r_i=i/(i-1)$,
\begin{equation}\creflabel[def]{def:f}
f(K)=2r_{k_1}+4r_{k_2}\chi(k_1=2)+\frac{3}{2}r_{k_2}\chi(k_1=3)
-r_{k_1}\sum_{l=1}^3 l\cdotp r_{j_1}\cdotp \chi(\text{$K=I\!J$ with $J\vDash b+l$}).
\end{equation}
\end{proposition}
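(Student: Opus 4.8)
The plan is to compute $X_{T_{(b+2)b2}}$ directly from \cref{prop:rec:3sun} and then to sort the resulting $e_I$-expansion according to the power of $e_1$. First, assuming $b\ge 2$ (the remaining cases being small enough to check directly), specializing \cref{prop:rec:3sun} to $(a,b,c)=(b+2,b,2)$ with $n=2b+7$, and collecting the repeated convolutions, we obtain
\[
X_{T_{(b+2)b2}}=2X_{P_n}+2\mathscr P_1+2\mathscr P_2+\mathscr P_3-\mathscr P_{b+1}-2\mathscr P_{b+2}-3\mathscr P_{b+3}.
\]
Next, \cref{prop:X.path} shows that $w_I=i_1(i_2-1)\dotsm(i_z-1)$ vanishes unless every part of $I$ other than the first is at least $2$; hence each path chromatic symmetric function splits uniquely as $X_{P_m}=e_1A_m+B_m$ with $A_m=\sum_{K\in\mathcal W_{m-1}}w_{1K}e_K$ and $B_m=\sum_{K\in\mathcal W_m}w_Ke_K$ each free of a factor $e_1$, so that no monomial of $X_{P_m}$ carries more than one factor $e_1$. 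This formula also subsumes the small factors, since $A_1=1$, $B_1=0$, $A_2=0$, $B_2=2e_2$, $A_3=e_2$, and $B_3=3e_3$. Substituting it into each product $\mathscr P_i=(e_1A_i+B_i)(e_1A_{n-i}+B_{n-i})$ and expanding, the coefficient of $e_1^2$ is assembled from the pieces $A_iA_{n-i}$, the coefficient of $e_1$ from the pieces $A_iB_{n-i}+B_iA_{n-i}$, and the $e_1$-free part from the pieces $B_iB_{n-i}$; these three collections are precisely $Y_2$, $Y_1$, and $Y_0$.

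What remains is reindexing. The $e$-monomials occurring in $A_m$ and $B_m$ concatenate, $e_{K_1}e_{K_2}=e_{K_1K_2}$ with $K_1K_2\in\mathcal W_{|K_1|+|K_2|}$ having a prefix of size $|K_1|$, and the relevant weights are multiplicative via $w_{1(K_1K_2)}=w_{1K_1}w_{1K_2}$ and $w_{K_1K_2}=w_{K_1}w_{1K_2}$. Using $e_{K_1K_2}=e_{K_2K_1}$ as symmetric functions, one reindexes each product so that its prefix sizes and the accompanying weight forms match the claimed expressions; this forces the prefix sizes $b+6-l$ in $Y_2$, the prefix sizes $b+l$ in $Y_1$, and a suffix $J\vDash b+l$ in $Y_0$. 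The coefficient $\min(l,7-l)$ in $Y_1$ appears because the two pieces $A_{b+i}B_{b+7-i}$ and $B_{b+i}A_{b+7-i}$ of the $e_1$-coefficient of $\mathscr P_{b+i}$ reindex to prefix sizes $b+(7-i)$ and $b+i$, each with weight $i$, and the pairs $\{i,7-i\}$ for $i\in\{1,2,3\}$ exhaust $\{1,\dots,6\}$. For $Y_0$ one pulls the common factor $\prod_i(k_i-1)=w_{1K}$ out of every weight: $w_K=r_{k_1}\prod_i(k_i-1)$ yields the term $2r_{k_1}$ (from $2X_{P_n}$); the pieces $e_2B_{n-2}$ and $e_3B_{n-3}$ (from $2\mathscr P_2$ and from $\mathscr P_3$) yield $4r_{k_2}\chi(k_1=2)$ and $\tfrac32r_{k_2}\chi(k_1=3)$ once the leftover factor $k_1-1\in\{1,2\}$ is absorbed; and $w_{K_1}w_{K_2}=r_{k_1}r_{j_1}\prod_i(k_i-1)$ yields the last sum of \eqref{def:f}. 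The parallel, lighter bookkeeping identifies $Y_{11}$ with the contribution of $2X_{P_n}$ and $2\mathscr P_1$ to $Y_1$, $Y_{12}$ with the contribution of $2\mathscr P_2$ together with the $e_2B_{n-3}$ part of the contribution of $\mathscr P_3$, $Y_{13}$ with the $3e_3A_{n-3}$ part of the contribution of $\mathscr P_3$, and the displayed $Y_2$ with the contributions of $2\mathscr P_1$, $\mathscr P_3$, and the negative terms.

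I expect the principal obstacle to be not conceptual but the volume and precision of the bookkeeping: keeping the count of $e_1$-factors exact, handling the identification $e_I=e_{\rho(I)}$ so that the stated composition-indexed formulas — and not merely the $e_\lambda$-coefficients — come out verbatim, and, above all, verifying the rational-coefficient function \eqref{def:f}, where one must simultaneously track the decompositions $K=I\!J$ with $J\vDash b+l$ and the factors $r_{k_i}$ produced by the various weights. Everything else reduces to routine substitution and collection of terms.
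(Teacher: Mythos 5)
Your proposal is correct and follows essentially the same route as the paper: apply \cref{prop:rec:3sun} with $(a,b,c)=(b+2,b,2)$ to get $X_G=2X_{P_n}+2\mathscr P_1+2\mathscr P_2+\mathscr P_3-\mathscr P_{b+1}-2\mathscr P_{b+2}-3\mathscr P_{b+3}$, expand via \cref{prop:X.path}, and sort by powers of $e_1$; your splitting $X_{P_m}=e_1A_m+B_m$ with $A_m,B_m$ supported on $\mathcal W$ is just a tidy packaging of the paper's ``sorting according to the power of $e_1$,'' and your term-by-term identifications (including the $\min(l,7-l)$ count and the extraction of $\prod_i(k_i-1)$ in \cref{def:f}) all check out.
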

\begin{proof}
Let $G=T_{(b+2)b2}$ and $n=2b+7$. By \cref{prop:rec:3sun},
\[
X_G=2X_{P_n}+2\mathscr P_1+2\mathscr P_2+\mathscr P_3
-\mathscr P_{b+1}-2\mathscr P_{b+2}-3\mathscr P_{b+3}.
\]
Since $X_{P_1}=e_1$, $X_{P_2}=2e_2$, $X_{P_3}=3e_3+e_{21}$ and
\[
\mathscr P_i=X_{P_i}X_{P_{n-i}}
=\sum_{I\vDash i,\ J\vDash n-i} w_I w_J e_{I\!J},
\]
we find
\begin{align*}
X_G&=\sum_{K\vDash 2b+7} 2w_K e_K
+\sum_{I\vDash 2b+6} 2w_I e_{1I}
+\sum_{I\vDash 2b+5} 4w_I e_{I2}
+\sum_{I\vDash 2b+4}(3w_I e_{I3}+w_I e_{1I2})\\
&\qquad
-\sum_{l=1}^3\sum_{I\vDash b+7-l, \, J\vDash b+l} 
l\cdotp w_I w_J e_{I\!J}.
\end{align*}
Sorting it according to the power of $e_1$, we obtain the desired formula for $Y_2$ and $Y_1$, and 
\[
Y_0=\sum_{K\in\mathcal W_{2b+7}} 2w_K e_K
+\sum_{I\in\mathcal W_{2b+5}} 4w_I e_{2I}
+\sum_{I\in\mathcal W_{2b+4}} 3w_I e_{3I}
-\sum_{l=1}^3 \sum_{(I,J)\in\mathcal W_{b+7-l}\times\mathcal W_{b+l}}
l\cdotp w_I w_J e_{I\!J}.
\]
Expressing $Y_0$ in terms of $r_i$, we obtain \cref{def:Y0}.
\end{proof}

By \cref{prop:X.3sun:b+2.b.2}, we divide the $e$-positivity problem into those for $Y_2$, $Y_1$ and $Y_0$. We shall conquer them in \cref{sec:Y2,sec:Y1,sec:Y0}, respectively.

\subsection{$Y_2$ is $e$-positive}\label{sec:Y2}
We come up with a positive $e_I$-expansion of $Y_2$.

\begin{lemma}\label{lem:e+:trinacria1.Y2}
The symmetric function $Y_2$ defined by \cref{def:Y2} has the positive $e_I$-expansion
\[
Y_2=\sum_{K\in\mathcal A\cup \mathcal B} w_{1K}e_K
+\sum_{K\in\mathcal C}\brk1{1+\chi(k_1=2)} w_{1K}e_K
+\sum_{K\in\mathcal D}\brk1{2+\chi(k_1=2)} w_{1K}e_K, 
\]
where 
\begin{align*}
\mathcal A&=
\{I\!J\in\mathcal W_{2b+5}\colon\abs{I}=b+3,\ i_1\ge 4\},\\
\mathcal B&=
\{I\!J\in\mathcal W_{2b+5}\colon\abs{I}=b+4,\ i_1=2,\ i_{-1}\ge 3\},\\
\mathcal C&=
\{I\!J\in\mathcal W_{2b+5}\colon\abs{I}=b+5,\ i_{-1}\ge 4\}, 
\end{align*}
and $\mathcal D$ is the set of compositions in $\mathcal W_{2b+5}$ with no prefix of size $b+\delta$ for any $\delta\in\{2, 3, 4, 5\}$.
\end{lemma}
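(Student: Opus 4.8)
The plan is to prove the stated identity by checking that the two sides agree as symmetric functions; positivity is then free, since every $K\in\mathcal W_{2b+5}$ has all parts $\ge 2$, so $w_{1K}=\prod_i(k_i-1)\ge 1>0$ and each coefficient on the right is a positive multiple of some $w_{1K}$. The only tool beyond bookkeeping is the composition-method identity $e_K=e_{K'}$ whenever $\rho(K)=\rho(K')$, which I will use to charge the negative contributions of \cref{def:Y2} against positive contributions at compositions sharing the same underlying partition; note also that $w_{1K}$ depends only on $\rho(K)$.

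First I would record, for each $K\in\mathcal W_{2b+5}$, the set $P(K)=\{\delta\in\{2,3,4,5\}\colon K\text{ has a prefix of size }b+\delta\}$. Since $K$ has no part $1$ and $|K|=2b+5$, its partial sums are strictly increasing with gaps $\ge 2$; hence the prefix of size $b+\delta$ is unique when it exists, and $P(K)$ contains no two consecutive integers, so $P(K)$ is one of the eight sets $\emptyset,\{2\},\{3\},\{4\},\{5\},\{2,4\},\{2,5\},\{3,5\}$. Reading off \cref{def:Y2}, the coefficient of $e_K$ in $Y_2$ before merging terms with equal underlying partition is
\[
c_K=w_{1K}\bigl(2+\chi(k_1=2)-3\chi(3\in P(K))-2\chi(4\in P(K))-\chi(5\in P(K))\bigr).
\]
When $2\notin P(K)$ and $3\notin P(K)$, I would check case by case that $c_K$ already equals the coefficient the lemma assigns: $P(K)=\emptyset$ gives $K\in\mathcal D$; $P(K)=\{5\}$ forces the last part of the $(b+5)$-prefix to be $\ge 4$, giving $K\in\mathcal C$; and $P(K)=\{4\}$ forces that part to be $\ge 3$, so $c_K=\chi(k_1=2)w_{1K}$, which is $w_{1K}$ when $K\in\mathcal B$ and $0$ otherwise.

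The heart of the argument is the remaining compositions, those with $2\in P(K)$ or $3\in P(K)$. Writing such a $K$ as $PQ$ with $P$ its unique prefix of size $b+2$ or $b+3$, the swap $\sigma\colon PQ\mapsto QP$ is a fixed-point-free involution pairing each $(b+2)$-prefix composition with a $(b+3)$-prefix composition and satisfying $\rho(\sigma K)=\rho(K)$, hence $e_K=e_{\sigma K}$ and $w_{1K}=w_{1\sigma K}$. For a pair $L=IJ$ with $|I|=b+3$ and $K_0=JI$ with $|J|=b+2$, evaluating the displayed formula — splitting on $i_1\in\{2,3,\ge 4\}$ and on $j_1\in\{2,\ge 3\}$ — gives $c_L+c_{K_0}=w_{1L}\,\chi(i_1\ge 4)$, the dependence on $j_1$ cancelling. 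On the other side, $L\in\mathcal A$ exactly when $i_1\ge 4$, whereas $K_0$, and every $L$ with $i_1\le 3$, lies in none of $\mathcal A,\mathcal B,\mathcal C,\mathcal D$, so the lemma assigns the pair total coefficient $w_{1L}\,\chi(i_1\ge 4)$ as well. Thus each $\sigma$-pair contributes equally to both sides, and summing over all singletons and all $\sigma$-pairs yields the asserted expansion.

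I expect the main obstacle to be entirely organizational: confirming that the eight cases for $P(K)$ are exhaustive and correctly controlled, that the defining conditions of $\mathcal A,\mathcal B,\mathcal C,\mathcal D$ (namely $i_1\ge 4$; $i_1=2$ with $i_{-1}\ge 3$; $i_{-1}\ge 4$; no prefix of size $b+\delta$ for $\delta\in\{2,3,4,5\}$) line up precisely with the sub-cases produced by $\sigma$ and by the singleton analysis, and that nothing is double-counted or missed. Each individual coefficient comparison is a one-line computation, but the proof works only if this case bookkeeping is airtight.
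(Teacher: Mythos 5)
Your proposal is correct and follows essentially the same route as the paper: both reduce to the coefficient formula $c_K=2+\chi(k_1=2)-\sum_{l\in L_K}l$ and then pair each composition with a prefix of size $b+3$ against its swap $IJ\mapsto JI$ with a prefix of size $b+2$ (your $\sigma$ is the paper's $\varphi$), checking the remaining cases directly. Your organization by the eight possible prefix-sets $P(K)$ is a slightly more systematic packaging of the paper's explicit eleven-set decomposition, and your coefficient computations (e.g.\ $c_L+c_{\sigma L}=w_{1L}\chi(i_1\ge 4)$) match the paper's.
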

\begin{proof}
Let $\mathcal W=\mathcal W_{2b+5}$.
For $K\in\mathcal W$, let $L_K$ be the set of numbers $l\in[3]$ such that $K$ has a prefix of size $b+6-l$. By \cref{def:Y2}, 
\begin{equation}\label{pf:Y2}
Y_2=\sum_{K\in\mathcal W}c_K w_{1K}e_K,
\end{equation}
where
\begin{equation}\label{pf:f.Y2}
c_K=2+\chi(k_1=2)-\sum_{l\in L_K}l.
\end{equation}
We can decompose~$\mathcal W$ into $11$ pairwise disjoint sets:
\[
\mathcal W
=\mathcal A_2
\cup
(\mathcal A_{30}\cup\mathcal A_{31}\cup\mathcal A_{32}\cup\mathcal A)
\cup
(\mathcal A_{40}\cup \mathcal A_{41}\cup\mathcal B)
\cup
(\mathcal A_5\cup\mathcal C)
\cup
\mathcal D,
\]
where
\begin{align*}
\mathcal A_2
&=\{I\!J\in\mathcal W\colon\abs{I}=b+2,\ j_1\ge 4\},\\
\mathcal A_{30}
&=\{I\!J\in\mathcal W\colon\abs{I}=b+3,\ i_1=2,\ j_1\ge 3\},\\
\mathcal A_{31}
&=\{I\!J\in\mathcal W\colon\abs{I}=b+3,\ i_1=2,\ j_1=2\},\\
\mathcal A_{32}
&=\{I\!J\in\mathcal W\colon\abs{I}=b+3,\  i_1=3\},\\
\mathcal A_{40}
&=\{I\!J\in\mathcal W\colon\abs{I}=b+4,\ i_1\ge 3\},\\
\mathcal A_{41}
&=\{I\!J\in\mathcal W\colon\abs{I}=b+4,\ i_1=2,\ i_{-1}=2\},\quad\text{and}\\
\mathcal A_5
&=\{I\!J\in\mathcal W\colon\abs{I}=b+5,\ i_{-1}=3\}.
\end{align*}
Consider the map $\varphi\colon\mathcal W(b+3)\to\mathcal W(b+2)$
defined by $\varphi(I\!J)=J\!I$, where $\abs{I}=b+3$. It is clear that~$\varphi$ is a bijection. Moreover, 
\[
\varphi(\mathcal A_{31})=\mathcal A_{41},
\quad
\varphi(\mathcal A_{32})=\mathcal A_5,
\quad\text{and}\quad 
\varphi(\mathcal A)=\mathcal A_2.
\]
Let $H=\varphi(K)$. Then $w_{1H}e_H=w_{1K}e_K$ as symmetric functions. By \cref{pf:f.Y2}, 
\[
c_K+c_H=\begin{dcases*}
0,& if $K\in\mathcal A_{31}\cup \mathcal A_{32}$,\\
1,& if $K\in\mathcal A$,
\end{dcases*}
\quad\text{and}\quad
c_K=\begin{dcases*}
0,& if $K\in\mathcal A_{30}\cup \mathcal A_{40}$,\\
1,& if $K\in\mathcal B$,\\
1+\chi(k_1=2),& if $K\in\mathcal C$,\\
2+\chi(k_1=2),& if $K\in\mathcal D$.
\end{dcases*}
\]
Substituting them into \cref{pf:Y2}, we obtain the desired formula.
\end{proof}

\subsection{$Y_1$ is $e$-positive}\label{sec:Y1}
In this section, we let $L_K$ be the set of numbers $l\in[6]$ such that $K$ has a prefix of size $b+l$. Let 
\[
l_K=\sum_{l\in L_K}\min(l,\,7-l)
\quad\text{and}\quad
N=\sum_{K\in\mathcal W_{2b+6}} l_K w_K e_K.
\]
Then $Y_1=Y_{11}+Y_{12}+Y_{13}-N$.
For any set $S\in 2^{[6]}$, let $\mathcal E_S$ be the set of compositions $K\in\mathcal W_{2b+6}$ such that $L_K=S$. Then $\mathcal E_S=\emptyset$ if $\{i,\, i+1\}\subseteq S$ for any $i\in[5]$. We write $\mathcal E_{\{s_1,\,\dots,\,s_r\}}=\mathcal E_{s_1\dotsm s_r}$ for compactness. Then
\[
\bigcup_{i=1}^6\mathcal W_{2b+6}(b+i)
=\brk4{\bigsqcup_{i=1}^6 \mathcal E_i}
\bigsqcup
\brk4{\bigsqcup_{\substack{1\le i<j\le 6\\
j\ge i+2}}\mathcal E_{ij}}
\bigsqcup
\brk1{\mathcal E_{135}\sqcup\mathcal E_{136}
\sqcup\mathcal E_{146}\sqcup\mathcal E_{246}}.
\]
The $20$ pairwise disjoint subsets on the right side constitute the set of compositions that appear in $N$.

Suppose that $\abs{S}\le 1$ or $S\in\{\{1, 5\}, \,\{1, 6\}, \,\{2, 6\}\}$. For any composition $K\in\mathcal E_S$,
\[
(Y_{11}-N)\big|_K=(2-l_K)w_K+w_{2K}\ge -w_K+w_{2K}\ge 0.
\]
We decompose the union of the remaining $11$ sets into the following $4$ families:
\[
\mathcal F_3=\mathcal E_{13},\quad
\mathcal F_4=\mathcal E_{14}\cup\mathcal E_{24},\quad
\mathcal F_5=\mathcal E_{25}\cup\mathcal E_{35}\cup\mathcal E_{135},
\quad\text{and}\quad
\mathcal F_6=\mathcal E_{36}\cup\mathcal E_{46}
\cup\mathcal E_{136}\cup\mathcal E_{146}\cup\mathcal E_{246},
\]
according to the maximum number in the subscripts. Recall that $\rho(K)$ is the underlying partition of~$K$. Our strategy for showing the $e$-positivity of $Y_1$ is the following \emph{charging argument}:

\begin{description}
\item[Step 1. Grouping]
Decompose each family $\mathcal F_k$ into pairwise disjoint subfamilies
\[
\mathcal A(I,J;\theta)\subseteq\{\alpha I\beta J\in\mathcal F_k\colon \rho(\alpha\beta)=\theta\},
\]
where $I$ and $J$ are compositions such that $\abs{J}=b+6-k$, $\theta$ is a partition such that $\rho(\alpha\beta)=\theta$, and each of the factors~$\alpha$ and $\beta$ is allowed to be empty. Namely,
\begin{equation}\label{decomp:Y1.Fk}
\mathcal F_k=\bigsqcup\mathcal A(I,J;\theta),
\end{equation}
where the number of triples $(I,J;\theta)$ in the decomposition above is finite.
\item[Step 2. Matching]
For any set $\mathcal A=\mathcal A(I,J;\theta)$,
let~$\mathcal A(i)$ be the set of compositions that are obtained from a composition $\alpha I\beta J\in\mathcal A$ by removing a part~$i$ from $\alpha$ or $\beta$. Find two sets $\mathcal B\subseteq \mathcal A(2)$ and $\mathcal C\subseteq\mathcal A(3)$ such that $s(\mathcal A,\mathcal B,\mathcal C)\ge 0$, where
\[
s(\mathcal A,\mathcal B,\mathcal C)
=(Y_{11}-N)\big|_{\mathcal A}
+Y_{12}\big|_{\{2H\colon H\in\mathcal B\}}
+Y_{13}\big|_{\{3H\colon H\in\mathcal C\}}.
\]
\item[Step 3. Distinctness verification]
Show that the sets $\mathcal B$ for distinct sets $\mathcal A$ are pairwise disjoint, and that the sets $\mathcal C$ for distinct sets $\mathcal A$ are pairwise disjoint.
\end{description}

First of all, we produce a lemma for fast computing the sum $s(\mathcal A,\mathcal B,\mathcal C)$.

\begin{lemma}\label{lem:computation.Y1}
Fix a partition $\theta$ and two compositions $I$ and $J$.
Let $\mathcal A\subseteq\{\alpha I\beta J\in\mathcal W_{2b+6}\colon \rho(\alpha\beta)=\theta\}$. For any composition $K=\alpha I\beta J\in\mathcal A\cup\mathcal A(2)$, let $K(i_1)$ be the composition obtained by replacing the factor $I$ with the part $i_1$ and by removing the suffix $J$. Then for any sets $\mathcal B\subseteq \mathcal A(2)$ and $\mathcal C\subseteq \mathcal A(3)$, we have $s(\mathcal A,\mathcal B,\mathcal C)=c(\mathcal A, \mathcal B, \mathcal C)w_{I\!J}'$, where
\[
c(\mathcal A, \mathcal B, \mathcal C)=
\sum_{K\in\mathcal A}(2-l_K)w_{K(i_1)}
+\sum_{H\in\mathcal B}w_{H(i_1)}
+\brk3{2\abs{\mathcal A}+4\abs{\mathcal B}+\frac{3}{2}\abs{\mathcal C}}
(i_1-1)w_{1\theta}
\]
and $w_{k_1\dotsm k_z}'=(k_2-1)\dotsm(k_z-1)$.
\end{lemma}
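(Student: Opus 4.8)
The plan is to derive the whole identity from one elementary multiplicativity property of the path weights in \cref{prop:X.path}: for a nonempty composition $A$ and an arbitrary composition $B$,
\[
w_{A\!B}=w_A\cdot\prod_{\text{parts }x\text{ of }B}(x-1).
\]
This is immediate from $w_{k_1\dotsm k_z}=k_1\prod_{i\ge2}(k_i-1)$, since the first part of $A\!B$ is the first part of $A$, while the remaining parts of $A\!B$ are the remaining parts of $A$ together with all parts of $B$. Two bookkeeping remarks will be used repeatedly: $\prod_{\text{parts }x\text{ of }B}(x-1)=w_{1B}$, and $w'_{IJ}$ is the product of $(x-1)$ over all parts $x$ of $I\!J$ other than the first, so $w'_{IJ}=(i_2-1)\dotsm(i_z-1)\prod_{\text{parts }x\text{ of }J}(x-1)$ for $I=i_1\dotsm i_z$.

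Next I would expand the weights attached to the compositions in $\mathcal A$, $\mathcal B$ and $\mathcal C$. I write a generic $K\in\mathcal A\cup\mathcal A(2)$ as $K=\alpha I\beta J$, with $\alpha,\beta$ possibly empty, where $\rho(\alpha\beta)=\theta$ if $K\in\mathcal A$ and $\rho(\alpha\beta)$ equals $\theta$ with one part $2$ deleted if $K\in\mathcal A(2)$. Applying the displayed identity to $\alpha$ (or directly, when $\alpha$ is empty) together with the factorization $\prod_{\text{parts of }I\beta J}(x-1)=(i_1-1)\,\prod_{\text{parts of }\beta}(x-1)\,w'_{IJ}$, and using $K(i_1)=\alpha\,i_1\,\beta$ and $w_{p\alpha}=p\prod_{\text{parts of }\alpha}(x-1)$, I obtain
\[
w_K=w_{K(i_1)}\,w'_{IJ}
\qquad\text{and}\qquad
w_{pK}=p\,(i_1-1)\,w_{1\rho(\alpha\beta)}\,w'_{IJ}\quad\text{for every part }p\ge2.
\]
Specializing the second identity: $w_{2K}=2(i_1-1)w_{1\theta}w'_{IJ}$ for $K\in\mathcal A$; $w_{4H}=4(i_1-1)w_{1\theta}w'_{IJ}$ for $H\in\mathcal B\subseteq\mathcal A(2)$, since deleting a part $2$ from $\theta$ multiplies $\prod(\text{parts}-1)$ by $2-1=1$; and $w_{3H}=\tfrac32(i_1-1)w_{1\theta}w'_{IJ}$ for $H\in\mathcal C\subseteq\mathcal A(3)$, since deleting a part $3$ divides that product by $3-1=2$.

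Finally I would unwind the definitions of $Y_{11}$, $Y_{12}$, $Y_{13}$ and $N$ to write
\[
s(\mathcal A,\mathcal B,\mathcal C)=\sum_{K\in\mathcal A}\brk1{(2-l_K)w_K+w_{2K}}+\sum_{H\in\mathcal B}(w_{4H}+w_H)+\sum_{H\in\mathcal C}w_{3H},
\]
substitute the displayed identities (including $w_H=w_{H(i_1)}w'_{IJ}$ for $H\in\mathcal B$), and factor the common $w'_{IJ}$ out of every term; what is left inside the bracket is exactly $\sum_{K\in\mathcal A}(2-l_K)w_{K(i_1)}+\sum_{H\in\mathcal B}w_{H(i_1)}+\brk1{2\abs{\mathcal A}+4\abs{\mathcal B}+\tfrac32\abs{\mathcal C}}(i_1-1)w_{1\theta}$, that is, $c(\mathcal A,\mathcal B,\mathcal C)$. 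I do not expect a genuine obstacle here; the fiddliest points, which I would spell out, are treating the empty-prefix case $\alpha=\emptyset$ on the same footing as the general case, and correctly tracking how deleting a part of value $2$ or $3$ rescales $w_{1\theta}$.
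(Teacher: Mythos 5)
Your proposal is correct and follows essentially the same route as the paper's proof: both unwind $s(\mathcal A,\mathcal B,\mathcal C)$ into the three weighted sums, exploit the multiplicativity $w_K=w_{K(i_1)}w'_{I\!J}$ and $w_{pK}=p(i_1-1)w_{1\rho(\alpha\beta)}w'_{I\!J}$, account for the rescaling of $w_{1\theta}$ by $k-1$ when a part $k\in\{2,3\}$ is deleted, and factor out $w'_{I\!J}$. No gap.
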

\begin{proof}
Write $s=s(\mathcal A,\mathcal B,\mathcal C)$. By \cref{def:Y1},
\[
s=\sum_{K\in\mathcal A}(2w_K+w_{2K}-l_K w_K)
+\sum_{H\in\mathcal B}(w_{4H}+w_H)
+\sum_{H\in\mathcal C}w_{3H}.
\]
A key fact for simplifying this formula is that if $K=\alpha I\beta J$, then
\[
w_K=w_{\alpha i_1\beta}=w_{K(i_1)}w_{I\!J}'
\quad\text{and}\quad
w_{uK}=w_{u\alpha I\beta J}=u\cdotp (i_1-1)w_{1\theta}w_{I\!J}'.
\]
On the other hand, for any $H\in\mathcal A(k)$, we can write $H=\alpha' I\beta' J'$, where $\alpha'$ and $\beta'$ are compositions such that $\rho(k\alpha'\beta')=\rho(\alpha\beta)$. Then
\[
w_{vH}=w_{v\alpha' I\beta' J}
=vw_{1\alpha'\beta'}\cdot (i_1-1)w_{I\!J}'
=v\cdot\frac{w_{1\theta}}{k-1}\cdot (i_1-1)w_{I\!J}'.
\]
Therefore, 
\[
\frac{s}{w_{I\!J}'}=\sum_{K\in\mathcal A}
\brk1{(2-l_K)w_{K(i_1)}+2(i_1-1)w_{1\theta}}
+\sum_{H\in\mathcal B}
\brk1{4(i_1-1)w_{1\theta}+w_{H(i_1)}}+
\sum_{H\in\mathcal C}\frac{3}{2}(i_1-1)w_{1\theta}.
\]
Since $w_{1\theta}$ is independent of $K$, we can transform the formula above to the desired one.
\end{proof}

By \cref{lem:computation.Y1}, the goal $s\ge 0$ in Step 2 reduces to $c(\mathcal A,\mathcal B,\mathcal C)\ge 0$ for each set $\mathcal A=\mathcal F_k(I,J;P)$. 

We handle the family~$\mathcal F_3$, $\dots$, $\mathcal F_6$ in \cref{lem3,lem4,lem5,lem6}, respectively. For compactness, we will omit the set braces for $\mathcal A$, $\mathcal B$ and $\mathcal C$ when the set is a singleton. 

\begin{lemma}\label{lem3}
We have the following.
\begin{enumerate}
\item
For any composition $I2J\in\mathcal E_{k(k+2)}$ with $\abs{J}=b+4-k$ for some $k\in[4]$, 
\[
c(I2J,\,I\!J,\,\emptyset)
=\brk1{9-4\chi(k\in\{1,4\})-5\chi(k\in\{2,3\})}i_1-6.
\]
\item
For any composition $I3J\in\mathcal E_{k(k+3)}$ with $\abs{J}=b+3-k$ for some $k\in[3]$, 
\[
c(I3J,\,\emptyset,\,I\!J)=3i_1-7.
\]
\item
For any composition $I22J\in\mathcal E_{k(k+2)(k+4)}$ with $\abs{J}=b+2-k$ for some $k\in[2]$, 
\[
c(I22J,\,I2J,\,\emptyset)=3(i_1-2).
\]
\end{enumerate}
\end{lemma}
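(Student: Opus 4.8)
The plan is to prove all three identities by direct computation using \cref{lem:computation.Y1}, which reduces each claim $s\ge 0$ to evaluating the explicit quantity $c(\mathcal A,\mathcal B,\mathcal C)$. In every case the sets $\mathcal A$, $\mathcal B$, $\mathcal C$ are singletons, so the three sums in the formula for $c$ collapse to single terms, and the combinatorial bookkeeping amounts to (i) identifying the value $l_K$ for the relevant composition $K\in\mathcal E_S$, (ii) computing the one-part weights $w_{K(i_1)}$ and $w_{H(i_1)}$, which are $w$-values of very short compositions, and (iii) reading off $w_{1\theta}$. The point of the lemma is precisely that $w_{I\!J}'$ factors out, so only these small pieces remain.

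For part (1), I would note that for $K=I2J\in\mathcal E_{k(k+2)}$ we have $L_K=\{k,k+2\}$, hence $l_K=\min(k,7-k)+\min(k+2,5-k)$; evaluating this for $k=1,2,3,4$ gives $l_K=1+3$, $2+4$, $3+3$, $4+2$, i.e.\ $l_K=4,6,6,6$ — which is exactly the case split $4\chi(k\in\{1\})+\dots$ written in the statement (with $k=4$ matching the $k\in\{1,4\}$ branch since $\min(4,3)=3$ and $\min(6,1)=1$ sum to... let me re-check: actually $4+2=6$, so $k=4$ gives $l_K=6$, matching the $\{2,3\}$-type value; I'd verify the stated coefficient $9-4\chi(\cdot)-5\chi(\cdot)$ is just $2-l_K$ rescaled plus the $2\abs{\mathcal A}+4\abs{\mathcal B}$ contribution). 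Here $\mathcal A=\{I2J\}$, $K(i_1)=i_1\,2$ so $w_{K(i_1)}=i_1\cdot 1=i_1$; $\mathcal B=\{I\!J\}\subseteq\mathcal A(2)$ with $H(i_1)=i_1$ so $w_{H(i_1)}=i_1$; and $\theta=\rho(2)=(2)$ so $w_{1\theta}=w_{12}=1\cdot 1=1$. Plugging into $c(\mathcal A,\mathcal B,\mathcal C)=(2-l_K)w_{K(i_1)}+w_{H(i_1)}+(2\abs{\mathcal A}+4\abs{\mathcal B}+\tfrac32\abs{\mathcal C})(i_1-1)w_{1\theta}$ gives $(2-l_K)i_1+i_1+(2+4)(i_1-1)=(9-l_K)i_1-6$, and substituting $l_K\in\{4,6\}$ according to $k$ yields the claimed formula. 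Parts (2) and (3) are entirely analogous: for (2), $K=I3J\in\mathcal E_{k(k+3)}$ has $l_K=\min(k,7-k)+\min(k+3,4-k)$, which equals $1+3=4$ for $k=1$, $2+2=4$ for $k=2$, $3+1=4$ for $k=3$, so $l_K=4$ uniformly; with $\mathcal C=\{I\!J\}$, $K(i_1)=i_1\,3$ giving $w_{K(i_1)}=2i_1$, $\theta=(3)$ giving $w_{1\theta}=w_{13}=1\cdot 2=2$, hence $c=(2-4)(2i_1)+(2\cdot1+\tfrac32\cdot1)\cdot(i_1-1)\cdot 2=-4i_1+7(i_1-1)=3i_1-7$. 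For (3), $K=I22J\in\mathcal E_{k(k+2)(k+4)}$: here $\mathcal A$ is built from a composition with factor $I$ followed by $22$ then $J$, so $\alpha\beta$ contributes the partition $\theta=\rho(22)=(2,2)$, $w_{1\theta}=w_{122}=1\cdot1\cdot1=1$; $l_K=\min(k,7-k)+\min(k+2,5-k)+\min(k+4,3-k)$ which for $k=1$ is $1+3+2$... I would carefully verify each $l_K$ value here and confirm the final arithmetic collapses to $3(i_1-2)$.

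The main obstacle is not conceptual but bookkeeping: correctly computing $l_K=\sum_{l\in L_K}\min(l,7-l)$ for each allowed value of $k$ in each part, and correctly tracking which short composition $K(i_1)$ or $H(i_1)$ arises — in particular making sure that $\mathcal B$ really lies in $\mathcal A(2)$ (i.e.\ that removing a part $2$ from the $\alpha\beta$-block of a composition in $\mathcal A$ produces exactly the listed set) and $\mathcal C$ in $\mathcal A(3)$, and that the $w'$-factor genuinely cancels. Once the $l_K$ table is pinned down, each formula follows by substituting into the displayed expression for $c(\mathcal A,\mathcal B,\mathcal C)$ and simplifying a linear expression in $i_1$; I would present the computation as a short case analysis on $k$ for parts (1) and (2), and a two-line check for part (3). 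No inequality needs to be proven in this lemma — it only records the values of $c$; the sign analysis (showing $c\ge 0$, e.g.\ using $i_1\ge 2$ or $i_1\ge 3$) is deferred to the subsequent application.
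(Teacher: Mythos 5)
Your overall approach is exactly the paper's: substitute into \cref{lem:computation.Y1} with singleton sets, identify $l_K$, and simplify. Part (2) is carried out correctly and matches the paper's computation verbatim, and your general reduction $c(I2J,\,I\!J,\,\emptyset)=(9-l_K)i_1-6$ in part (1) is also right. However, part (1) contains a concrete arithmetic error in the $l_K$ table that, as written, fails to verify the stated formula. You compute $l_K=\min(k,7-k)+\min(k+2,7-(k+2))$ as $4,6,6,6$ for $k=1,2,3,4$, but the correct values are $4,5,5,4$: for $k=2$ you need $\min(4,3)=3$ (you wrote $4$), for $k=3$ you need $\min(5,2)=2$ (you wrote $3$), and for $k=4$ you need $\min(4,3)+\min(6,1)=3+1=4$ (you first wrote this correctly, then ``re-checked'' it to the wrong value $6$). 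These errors matter: substituting your $l_K=6$ into $(9-l_K)i_1-6$ gives $3i_1-6$, not the claimed $4i_1-6$ for $k\in\{2,3\}$, and your table places $k=4$ in the wrong branch of the indicator functions, contradicting the statement's grouping $k\in\{1,4\}$ versus $k\in\{2,3\}$. With the corrected values $l_K=4\chi(k\in\{1,4\})+5\chi(k\in\{2,3\})$ the formula follows at once.

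Part (3) is only sketched (``I would carefully verify\dots''); to complete it you need $l_K=6$ for both $k=1$ ($1+3+2$) and $k=2$ ($2+3+1$), together with $w_{K(i_1)}=w_{i_122}=i_1$, $w_{H(i_1)}=w_{i_12}=i_1$ and $w_{1\theta}=w_{122}=1$, giving $c=(2-6)i_1+i_1+6(i_1-1)=3(i_1-2)$. Everything else in your setup --- the identification of $\theta$, the membership $\mathcal B\subseteq\mathcal A(2)$ and $\mathcal C\subseteq\mathcal A(3)$, and the observation that no sign analysis is needed in this lemma --- is correct.
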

\begin{proof}
We show them by \cref{lem:computation.Y1}. 
\begin{enumerate}
\item
By definition, $l_{I2J}=4\chi(k\in\{1,4\})+5\chi(k\in\{2,3\})$.
Then 
\[
c(I2J,\,I\!J,\,\emptyset)=(2-l_{I2J})w_{i_1}+w_{i_1}+(2+4)(i_1-1),
\]
which simplifies to the desired formula.
\item
In this case, we have
$c(I3J, \,\emptyset, \,I\!J)=(2-4)w_{i_13}+(2+3/2)(i_1-1)w_{13}=3i_1-7$.
\item
In this case, we have
$c(I22J, \,I2J, \,\emptyset)=(2-6)w_{i_1}+w_{i_1}+(2+4)(i_1-1)=3(i_1-2)$.
\end{enumerate}
This completes the proof.
\end{proof}

\begin{lemma}\label{lem4}
There exists a decomposition \cref{decomp:Y1.Fk} for $k=4$, such that for each $\mathcal A=\mathcal A(I,J;\theta)$, there exist sets $\mathcal B$ and $\mathcal C$ with $c(\mathcal A, \mathcal B, \mathcal C)>0$. Moreover, all the sets $\mathcal B$ and $\mathcal C$ are pairwise disjoint, and
\begin{align*}
\bigcup \mathcal B
&=\{I\!J\in\mathcal W_{2b+4}\colon\abs{J}=b+2,\ j_1\ge 3\}, 
\quad\text{and}\\
\bigcup \mathcal C
&=\{I\!J\in\mathcal W_{2b+3}\colon\abs{J}=b+2,\ j_1\ge 3,\ i_1\ge 3\}.
\end{align*}
\end{lemma}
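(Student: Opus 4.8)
The plan is to make the structure of $\mathcal F_4=\mathcal E_{14}\cup\mathcal E_{24}$ explicit, peel off as singletons the compositions that \cref{lem3} already handles, and group the remaining ones so that the borrowed $e_I$-terms aggregate to exactly the two sets in the statement. A composition lies in $\mathcal E_{14}$ precisely when its partial sums attain $b+1$ and $b+4$ and miss $b+2,b+3,b+5,b+6$, so every member of $\mathcal E_{14}$ is uniquely $P\,3\,J$ with $P\in\mathcal W_{b+1}$, $J\in\mathcal W_{b+2}$, $j_1\ge 3$; likewise every member of $\mathcal E_{24}$ is uniquely $P\,2\,J$ with $P\in\mathcal W_{b+2}$, $J\in\mathcal W_{b+2}$, $j_1\ge 3$. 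By \cref{lem3} the singleton $\mathcal A=\{P2J\}$ with $\mathcal B=\{PJ\}$ has $c(\mathcal A,\mathcal B,\emptyset)=4i_1-6>0$, while the singleton $\mathcal A=\{P3J\}$ with $\mathcal C=\{PJ\}$ has $c(\mathcal A,\emptyset,\mathcal C)=3i_1-7$, which is positive exactly when $i_1\ge 3$; moreover $PJ$ is admissible for $\bigcup\mathcal C$ only when it begins with a part $\ge 3$. So I would declare a singleton every $P3J\in\mathcal E_{14}$ with $i_1\ge 3$ and $P$ not ending in a part $2$, and every $P2J\in\mathcal E_{24}$ with $P$ not ending in a part $3$; these produce, respectively, exactly the compositions $I\!J$ with $\abs I=b+1$, $i_1\ge 3$ and $I$ not ending in $2$, and the compositions $I\!J$ with $\abs I=b+2$ and $I$ not ending in $3$.

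What remains is the family of $\mathcal E_{14}$-compositions whose $P$ begins or ends with a $2$, together with the $\mathcal E_{24}$-compositions whose $P$ ends with a $3$. Here $2P'3J\leftrightarrow P'32J$ (with $P'\in\mathcal W_{b-1}$) is a size-preserving bijection between the two extreme cases, and both of these demand the same $Y_{12}$-term $P'3J$, so merging is unavoidable. The plan is to group the leftover compositions by a common core: from a leftover $\mathcal E_{14}$-composition delete the forced $3$ and all the $2$'s adjacent to it on either side to obtain $Q\in\mathcal W$ which is empty or has both end parts $\ge 3$; from a leftover $\mathcal E_{24}$-composition $P'\,3\,2\,J$ delete the $3$ at position $b+2$ and the $2$'s adjacent to it on either side to obtain a core of the same kind; writing $r$ for the number of $2$'s deleted, place the composition into the group $\mathcal A=\mathcal A(Q,J;32^{r})$, consisting of those $\alpha Q\beta J\in\mathcal F_4$ with $\rho(\alpha\beta)=32^{r}$ whose distinguished $3$ and $2$'s are exactly the parts forming $\alpha$ and $\beta$. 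Inside such a group I would take $\mathcal C$ to be the unique member of $\mathcal A(3)$ that still begins with the first part of $Q$, namely $Q\,2^{r}\,J$, which lands in $\bigcup\mathcal C$ and whose $I$-part $Q\,2^{r}$ ends in $2$ (so it was not claimed by any $\mathcal E_{14}$-singleton), and $\mathcal B$ to be the members of $\mathcal A(2)$ whose $I$-part ends in $3$, that is, exactly the $Y_{12}$-terms withheld from the $\mathcal E_{24}$-members of the group. Then \cref{lem:computation.Y1} turns $c(\mathcal A,\mathcal B,\mathcal C)>0$ into an elementary inequality in the first part of $Q$ and the sizes $\abs{\mathcal A},\abs{\mathcal B},\abs{\mathcal C}$; the reason for stripping \emph{every} leading $2$ is precisely that the leading-part parameter is then $\ge 3$ rather than at its worst value $2$, which is what makes the inequality strict. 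The degenerate case $Q=\emptyset$, which occurs only for odd $b$ with $P=2^{(b+1)/2}$, has to be argued separately, since \cref{lem:computation.Y1} is stated for nonempty~$I$.

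The step I expect to be the real obstacle is the distinctness verification of Step 3, together with pinning the two unions exactly. Because the condition $i_1\ge 3$ in $\bigcup\mathcal C$ is inflexible, every $Y_{13}$-term borrowed by a leftover group must be rerouted to a composition beginning with a part $\ge 3$, and one has to check that the candidate $Q\,2^{r}\,J$ is genuinely free and that, as $(Q,J,r)$ ranges, these rerouted terms together with the $\mathcal E_{14}$-singleton $\mathcal C$'s tile $\{I\!J\in\mathcal W_{2b+3}\colon \abs J=b+2,\ j_1\ge 3,\ i_1\ge 3\}$ with no overlaps — and symmetrically that the $\mathcal B$'s tile $\{I\!J\in\mathcal W_{2b+4}\colon \abs J=b+2,\ j_1\ge 3\}$. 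The crux is that whether an arrangement $\alpha Q\beta J$ of the parts $32^{r}$ around a fixed core $Q$ belongs to $\mathcal F_4$ is sensitive to fine features of $Q$ and $J$, not just their first parts, so verifying that the assignment of leftover compositions to the groups $\mathcal A(Q,J;32^{r})$ is an honest partition of what is left of $\mathcal F_4$, and that the resulting $\mathcal B$'s and $\mathcal C$'s are pairwise disjoint, requires a careful case analysis of where the distinguished $3$ and the $2$'s can sit.
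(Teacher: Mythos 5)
Your reading of $\mathcal F_4=\mathcal E_{14}\cup\mathcal E_{24}$ is correct, and you correctly isolate the real obstruction: an $\mathcal E_{14}$-composition with leading part $2$ cannot be a singleton, both because $3i_1-7<0$ for $i_1=2$ and because the borrowed term would violate the constraint $i_1\ge 3$ in $\bigcup\mathcal C$, so such compositions must be merged with $\mathcal E_{24}$-compositions in order to share $Y_{12}$-terms. But your grouping rule, as stated, does not cover these very compositions. You form the core $Q$ by deleting the distinguished $3$ \emph{and the $2$'s adjacent to it}; for a leftover composition such as $2\,Q'\,3\,J$ with $q'_1\ge 3$ and $Q'$ not ending in $2$, no part $2$ is adjacent to the $3$, so your rule returns the core $2Q'$, which begins with a $2$ — contradicting your own claim that every core has both end parts $\ge 3$, and forcing the candidate $\mathcal C$-element $2Q'J$ outside the admissible union. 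Your later aside about ``stripping every leading $2$'' points to a different rule (delete the $3$, the $2$'s adjacent to it, \emph{and} all leading $2$'s of the whole composition), but the proof never states it, and the two rules assign compositions to different groups. Moreover, the two claims the lemma actually makes are left unverified: you never compute $c(\mathcal A,\mathcal B,\mathcal C)$ for a merged group, and you explicitly defer both the all-$2$'s degenerate case and the partition/disjointness check. For the record, with the corrected rule the group is $\{2^aQ2^{r-a}3J\colon 0\le a\le r\}\cup\{2^aQ2^{r-1-a}32J\colon 0\le a\le r-1\}$ with $\mathcal B$ the $r$ withheld $Y_{12}$-terms and $\mathcal C=\{Q2^rJ\}$, and \cref{lem:computation.Y1} gives $c=7q_1-15$; this is positive precisely because $q_1\ge 3$, so the missing computation genuinely depends on the leading $2$'s having been stripped into the group.

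For comparison, the paper's decomposition is leaner and avoids multi-element cores altogether: it keeps \emph{every} $I3J$ with $i_1\ge 3$ as a singleton (an $I$ ending in $2$ causes no harm, since $\bigcup\mathcal C$ constrains only $i_1$ and $j_1$), and forms only the two-element groups $\{2^kI3J,\ I32^kJ\}$ with the single shared term $\mathcal B=\{I32^{k-1}J\}$, for which \cref{lem:computation.Y1} gives $c=4(i_1-2)>0$; all remaining $\mathcal E_{24}$-compositions are singletons handled by \cref{lem3}. Your plan can very likely be completed along the lines you sketch, but as written it is a sketch with an inconsistent definition at its center and with the lemma's substantive assertions — the positivity of $c$ for the merged groups and the exact identification of $\bigcup\mathcal B$ and $\bigcup\mathcal C$ — not established.
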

\begin{proof}
We can decompose $\mathcal F_4$ into pairwise disjoint sets $\mathcal A_0,\dots,\mathcal A_{d+1}$, where $d=\floor{(b+1)/2}$, 
\begin{align*}
\mathcal A_0&=\{I3J\colon i_1\ge 3\},\\
\mathcal A_k&=\{2^kI3J,\ I32^kJ\colon i_1\ge 3\},\quad\text{for $1\le k\le d$},\quad\text{and}\\
\mathcal A_{d+1}&=\mathcal E_{24}\backslash \cup_{k=1}^d\{I32^kJ\colon i_1\ge 3\}.
\end{align*}
Here we omitted the common conditions $\mathcal A_i\subset\mathcal F_4$, $\abs{J}=b+2$, and $j_1\ge 3$ for brevity. 
\begin{itemize}
\item
For $\mathcal A_0$, we have $c(I3J,\,\emptyset,\,I\!J)=3i_1-7>0$ by \cref{lem3}.
\item
For $\mathcal A_k$ with $k\in[d]$, by \cref{lem:computation.Y1},
\[
c\brk1{\{2^kI3J,\,I32^kJ\},\,\{I32^{k-1}J\},\,\emptyset}
=-2w_{2i_13}-3w_{i_13}+w_{i_13}+(2\cdotp 2+4)(i_1-1)w_{13}=4(i_1-2)>0.
\]
\item
For $\mathcal A_{d+1}$, we have $c(I2J,\,I\!J,\,\emptyset)=4x-6>0$ by \cref{lem3}.
\end{itemize}
It is routine to check that the sets $\mathcal B$ and $\mathcal C$ are disjoint, and the unions have the desired form. 
\end{proof}

\begin{lemma}\label{lem5}
There exists a decomposition \cref{decomp:Y1.Fk} for $k=5$, such that for each $\mathcal A=\mathcal A(I,J;\theta)$, there exist sets $\mathcal B$ and $\mathcal C$ with $c(\mathcal A, \mathcal B, \mathcal C)\ge 0$. Moreover, all the sets $\mathcal B$ and $\mathcal C$ are pairwise disjoint, and
\begin{align*}
\bigcup \mathcal B
&=\{I\!J\in\mathcal W_{2b+4}\colon\abs{J}=b+1\}
\quad\text{and}\\
\bigcup \mathcal C
&=\{I\!J\in\mathcal W_{2b+3}\colon\abs{J}=b+1,\ i_1\ge 3,\ i_{-1}\ge 3\}.
\end{align*}
\end{lemma}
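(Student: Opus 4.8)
The plan is to mimic the structure of the proof of \cref{lem4}. First I would compute, via \cref{lem:computation.Y1}, the coefficients $c(\mathcal A,\mathcal B,\mathcal C)$ for the ``atomic'' shapes that will appear: for a composition $I3J\in\mathcal E_{5}$ with $\abs{J}=b+1$ one gets $l_{I3J}=\min(5,2)=2$, so $c(I3J,\emptyset,I\!J)=(2-2)w_{i_13}+(2+\tfrac32)(i_1-1)w_{13}=\tfrac72(i_1-1)$, which is $\ge 0$ and is $>0$ once $i_1\ge 3$; for $I2J\in\mathcal E_{5}$ (so $l=2$ again) one gets $c(I2J,I\!J,\emptyset)=(2-2)w_{i_1}+w_{i_1}+(2+4)(i_1-1)=7i_1-6>0$; and for the ``shifted'' pieces $2^kI3J,\ I32^kJ\in\mathcal E_{S}$ with $5\in S$, matched against $\{I32^{k-1}J\}$, \cref{lem:computation.Y1} gives $c=-2w_{2i_13}-l\cdot w_{i_13}+w_{i_13}+(4+4)(i_1-1)w_{13}$; using $l\le\min(2,7-2)+\min(3,4)=2+3=5$ for $S=\{2,5\}$ or $l\le 2+\min(3,4)+\min(5,2)=2+3+2=7$ for $S=\{3,5\}$ or $S=\{1,3,5\}$, this is $\ge 8(i_1-1)-3w_{2i_13}-4w_{i_13}$, and since $w_{2i_13}=2(i_1-1)\cdot 2=4(i_1-1)$ and $w_{i_13}=i_1\cdot 2$ wait — I must recompute $w$ carefully: $w_{i_13}=i_1(3-1)=2i_1$, $w_{2i_13}=2(i_1-1)(3-1)=4(i_1-1)$, so $c\ge 8(i_1-1)-12(i_1-1)-8i_1=-4(i_1-1)-8i_1<0$, which is wrong — so the matching multiset must instead be chosen so that the removed part is a $2$ adjacent to the $3$, giving $H(i_1)=i_13$ rather than $2i_13$; then $c=-2w_{2i_13}-l\,w_{i_13}+w_{i_13}+8(i_1-1)w_{13}$ with $w_{13}=1\cdot 2=2$, i.e.\ $c=-8(i_1-1)-(2l-1)\cdot 2i_1+16(i_1-1)=8(i_1-1)-(2l-1)2i_1$; for $l=5$ this is $8i_1-8-18i_1<0$ still, so the correct bookkeeping is that $\mathcal E_{\{2,5\}}$-elements of the form $I32^kJ$ with the $2$'s \emph{between} $I3$ and $J$ carry $l=\min(2,5)+\min(5,2)$? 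No: a prefix of size $b+2$ and a prefix of size $b+5$ — but $\abs{J}=b+1$ so there is no prefix of size $b+2$ unless $j_1=1$, impossible. Hence for these shifted pieces only $5\in S$ contributes among $\{4,5,6\}$ and at most one of $\{1,3\}$, so $l\le 2+3=5$, and the genuinely dangerous case is $S=\{3,5\}$ with $l=5$; I would handle it by matching it against \emph{two} auxiliary compositions (one in $\mathcal A(2)$, one in $\mathcal A(3)$) or by absorbing it into a larger $\mathcal A$ containing its ``$I3J$-core.''

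With those computations in hand, the second step is to exhibit the decomposition \eqref{decomp:Y1.Fk} for $k=5$. Writing $d=\floor{(b+1)/2}$ or the analogous bound, I would set $\mathcal A_0=\{I3J:i_1\ge 3,\ i_{-1}\ge 3\}\cap\mathcal F_5$, then $\mathcal A_t=\{2^tI3J,\ I32^tJ:\dots\}$ for $1\le t\le d$, and a final ``remainder'' set $\mathcal A_{d+1}$ collecting whatever of $\mathcal E_{25}\cup\mathcal E_{35}\cup\mathcal E_{135}$ is left — which should consist of compositions whose ``core'' after stripping the shift is an $I2J$ (so that the $c(I2J,I\!J,\emptyset)=7i_1-6>0$ bound applies), together possibly with compositions where $i_1=2$ or $i_{-1}=2$ that must be reassigned. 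For each block I pick $\mathcal B$ to be the set of cores with a single $2$ removed and $\mathcal C$ the set of cores with a $3$ removed, exactly paralleling \cref{lem4}; the coefficient inequalities above give $c(\mathcal A_t,\mathcal B_t,\mathcal C_t)\ge 0$.

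The third step is the distinctness verification: the sets $\mathcal B_t$ must be pairwise disjoint and their union must be exactly $\{I\!J\in\mathcal W_{2b+4}:\abs{J}=b+1\}$, and likewise $\bigcup\mathcal C_t=\{I\!J\in\mathcal W_{2b+3}:\abs{J}=b+1,\ i_1\ge 3,\ i_{-1}\ge 3\}$. This is bookkeeping: a composition $I\!J\in\mathcal W_{2b+4}$ with $\abs{J}=b+1$ arises as $\mathcal B_t$-image of exactly one block, namely the one indexed by how many $2$'s sit in the designated slot of its preimage, and the $i_1\ge 3$, $i_{-1}\ge 3$ constraints on $\bigcup\mathcal C$ are forced because a $3$ can be stripped only from a core whose first and last parts are genuine (not part of the stripped shift).

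I expect the main obstacle to be \textbf{Step 1 (grouping)} together with the boundary cases of Step 2: unlike in \cref{lem4}, here $\abs{J}=b+1$ is odd-shifted relative to the prefix sizes $b+2,\dots,b+6$, so the sets $\mathcal E_{25}$, $\mathcal E_{35}$, $\mathcal E_{135}$ interact differently and the simple ``core is $I2J$ or $I3J$'' dichotomy may fail for compositions where stripping the $2$'s would violate $i_1\ge 2$ or create a new prefix of a forbidden size; getting a single uniform decomposition that makes every $c(\mathcal A,\mathcal B,\mathcal C)\ge 0$ while keeping the $\mathcal B$'s and $\mathcal C$'s disjoint and hitting exactly the stated unions is where the real care lies. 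The arithmetic ($c=7i_1-6$, $c=4(i_1-2)$, $c=\tfrac72(i_1-1)$, etc.) is routine once the blocks are correctly defined.
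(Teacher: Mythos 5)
There is a genuine gap, and it sits exactly where you predicted: the grouping. Two concrete problems. First, your ``atomic'' computations miscount $l_K$. A composition $I3J$ with $\abs{J}=b+1$ has $\abs{I}=b+2$, hence prefixes of sizes $b+2$ \emph{and} $b+5$, so it lies in $\mathcal E_{25}$ (not $\mathcal E_5$, which is not even part of $\mathcal F_5$) and $l_K=\min(2,5)+\min(5,2)=4$, giving $c(I3J,\emptyset,I\!J)=3i_1-7$ by \cref{lem3}, not $\tfrac72(i_1-1)$; similarly $I2J$ with $\abs{J}=b+1$ lies in $\mathcal E_{35}$ with $l_K=5$ and $c(I2J,I\!J,\emptyset)=4i_1-6$, not $7i_1-6$. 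These errors matter because $3i_1-7>0$ forces $i_1\ge 3$, which is why the correct $\bigcup\mathcal C$ carries the constraints $i_1\ge 3$, $i_{-1}\ge 3$ and why the compositions with $i_1=2$ or $i_{-1}\le 3$ must be routed into other blocks.

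Second, and decisively: the direct transplant of the $\mathcal A_t=\{2^tI3J,\ I32^tJ\}$ blocks from \cref{lem4} does fail here (your own arithmetic shows the resulting $c$ is negative), and you do not replace it with a working decomposition --- you end with ``match against two auxiliary compositions or absorb into a larger $\mathcal A$,'' which is the entire content of the lemma left unproved. The decomposition that works is of a different shape: besides $\mathcal A_0=\{I3J\colon i_1\ge 3,\ i_{-1}\ge 3\}$, $\mathcal A_1=\{I2J\colon i_{-1}\ge 4\}$, and the block $\mathcal E_{135}=\{I22J\}$ matched with $I2J$ (giving $3(i_1-2)\ge 0$), one groups \emph{three} compositions $\{2Ik3J,\ kI23J,\ kI32J\}$ for each $k\ge 3$ (and the pair $\{2I23J,\ 2I32J\}$ for $k=2$) and matches each group against the single composition $kI3J\in\mathcal A(2)$; \cref{lem:computation.Y1} then gives $c=4(i_1-1)(k-3)\ge 0$ (and $c=0$ for $k=2$). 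Note that the matched composition is obtained by deleting a $2$ adjacent to $J$, not a $2$ from the prefix, which is also what makes the $\mathcal B$'s disjoint and their union exactly $\{I\!J\colon\abs{J}=b+1\}$ (the cases $i_{-1}\ge 4$, $i_{-1}=3$, $i_{-1}=2$ coming from $\mathcal A_1$, the $\mathcal A_k$'s, and $\mathcal E_{135}$ respectively). Without this specific regrouping the charging argument does not close, so the proposal as written does not establish the lemma.
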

\begin{proof}
We can decompose $\mathcal E_{25}\cup\mathcal E_{35}$ into pairwise disjoint sets $\mathcal A_0,\dots,\mathcal A_{b}$, where
\begin{align*}
\mathcal A_0&=\{I3J\colon i_1\ge 3,\ i_{-1}\ge 3\},
&
\mathcal A_1&=\{I2J\colon i_{-1}\ge 4\},
\\
\mathcal A_2&=\{2I23J,\ 2I32J\colon I, J\},
&
\mathcal A_k&=\{2Ik3J,\ kI23J,\ kI32J\colon I, J\},\quad\text{for $3\le k\le b$}.
\end{align*}
Here we omitted the condition that the composition pair $(I,J)$ runs over all possibilities such that $\mathcal A_i\subset\mathcal F_5$ and $\abs{J}=b+1$. We deal with $\mathcal F_5=\mathcal E_{25}\cup\mathcal E_{35}\cup\mathcal E_{135}$ in $5$ classes.
\begin{enumerate}
\item
For $\mathcal A_0$, we have $c(I3J,\,\emptyset,\,I\!J)=3i_1-7>0$ by \cref{lem3}.
\item
For $\mathcal A_1$, we have $c(I2J,\,I\!J,\,\emptyset)=4i_1-6>0$ by \cref{lem3}.
\item
For $\mathcal A_2$, by \cref{lem:computation.Y1}, 
\[
c(\{2I23J,\ 2I32J\},\,2I3J,\,\emptyset)
=-2w_{2i_13}-3w_{2i_13}+w_{2i_13}+(2\cdotp 2+4)(i_1-1)w_{13}=0.
\]
\item
For $\mathcal A_k$ with $3\le k\le b$, by \cref{lem:computation.Y1}, 
\begin{align*}
c(\{2Ik3J,\ kI23J,\ kI32J\},\,kI3J,\,\emptyset)
&=-2w_{2i_1k3}-2w_{ki_13}-3w_{ki_13}+w_{ki_13}+(2\cdotp 3+4)(i_1-1)w_{1k3}\\
&=4(i_1-1)(k-3)\ge 0.
\end{align*}
\item
For $\mathcal E_{135}$, we have $c(I22J,\,I2J,\,\emptyset)=3(i_1-2)\ge 0$ by \cref{lem3}.
\end{enumerate}
It is routine to check that the sets $\mathcal B$ and $\mathcal C$ are disjoint, and the unions have the desired form. 
\end{proof}

\begin{lemma}\label{lem6}
There exists a decomposition \cref{decomp:Y1.Fk} for $k=6$, such that for each $\mathcal A=\mathcal A(I,J;\theta)$, there exist sets $\mathcal B$ and $\mathcal C$ with $c(\mathcal A, \mathcal B, \mathcal C)\ge 0$. Moreover, all the sets $\mathcal B$ and $\mathcal C$ are pairwise disjoint, and
\[
\bigcup \mathcal B
=\{I\!J\in\mathcal W_{2b+4}\colon\abs{J}=b\}
\quad\text{and}\quad
\bigcup \mathcal C
=\{I\!J\in\mathcal W_{2b+3}\colon\abs{J}=b\}.
\]
\end{lemma}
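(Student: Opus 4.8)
I would run the same three-step charging argument that settles \cref{lem4,lem5}, now for $\mathcal F_6=\mathcal E_{36}\cup\mathcal E_{46}\cup\mathcal E_{136}\cup\mathcal E_{146}\cup\mathcal E_{246}$. Every composition of $\mathcal F_6$ has a suffix $J$ of size $b$; once it is removed, the head of size $b+6$ is rigid, so that a member of $\mathcal E_{36}$ reads $I3J$ with $\abs I=b+3$ and $i_{-1}\ge 3$, one of $\mathcal E_{46}$ reads $I2J$ with $\abs I=b+4$, one of $\mathcal E_{136}$ reads $I23J$ with $\abs I=b+1$, one of $\mathcal E_{146}$ reads $I32J$ with $\abs I=b+1$, and one of $\mathcal E_{246}$ reads $I22J$ with $\abs I=b+2$. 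First I would dispose, as singletons, of the families whose one-composition contribution is already nonnegative: by \cref{lem3} one gets $c(I2J,I\!J,\emptyset)=5i_1-6>0$ on $\mathcal E_{46}$, $c(I22J,I2J,\emptyset)=3i_1-6\ge 0$ on $\mathcal E_{246}$, and $c(I3J,\emptyset,I\!J)=3i_1-7>0$ on the part of $\mathcal E_{36}$ with $i_1\ge 3$. What is left is the part of $\mathcal E_{36}$ with $i_1=2$ together with all of $\mathcal E_{136}$ and $\mathcal E_{146}$.

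These I would handle by grouping. Fix compositions $I,J$ with $\abs I=b+1$ and $\abs J=b$, put $\theta=\rho(32)$, and let $\mathcal A=\mathcal A(I,J;\theta)$ be the set of those of the six words $\alpha I\beta J$ with $\rho(\alpha\beta)=\theta$ that lie in $\mathcal F_6$; generically — when $i_{-1}\ge 4$ and the tail of $I$ is otherwise unexceptional — exactly $2I3J$, $I23J$, $I32J$, $3I2J$ do, with $2I3J\in\mathcal E_{36}$, $3I2J\in\mathcal E_{46}$, $I23J\in\mathcal E_{136}$ and $I32J\in\mathcal E_{146}$. Take $\mathcal B=\{I3J,\ 3I\!J\}$, obtained by deleting the part $2$, and $\mathcal C=\{2I\!J,\ I2J\}$, obtained by deleting the part $3$. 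A short computation with \cref{lem:computation.Y1} then gives
\[
c(\mathcal A,\mathcal B,\mathcal C)=17i_1-27>0.
\]
The point is that \emph{both} slots of $\mathcal A(2)$ and \emph{both} slots of $\mathcal A(3)$ are spent: each of the two $\mathcal B$-elements contributes $4(i_1-1)w_{1\theta}$ and each of the two $\mathcal C$-elements contributes $\tfrac32(i_1-1)w_{1\theta}$ to the last summand of \cref{lem:computation.Y1}, and this is exactly enough to beat the penalty $l_K=5$ carried by $I23J$ and $I32J$; at $i_1=2$, where $I=2I'$ and all the relevant $w$'s collapse, $c$ is only barely positive. Using the full $\mathcal C$-budget is affordable because, unlike in \cref{lem4,lem5}, the prescribed $\bigcup\mathcal C$ for $\mathcal F_6$ carries no side conditions — it is all of $\{I\!J\in\mathcal W_{2b+3}\colon\abs J=b\}$, and likewise $\bigcup\mathcal B=\{I\!J\in\mathcal W_{2b+4}\colon\abs J=b\}$. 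The boundary cases $i_{-1}=3$ (so $3I2J\in\mathcal E_{146}$), $i_{-1}=2$ (so $2I3J\in\mathcal E_{136}$ and $3I2J\in\mathcal E_{246}$, and both leave this grouping), and the special tails of $I$ that push $23I\!J$ or $32I\!J$ into $\mathcal F_6$, are absorbed by shrinking or enlarging $\mathcal A$, $\mathcal B$, $\mathcal C$ and recomputing $c$ in the same way.

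The step I expect to be the real obstacle is Step 3, the distinctness verification. A composition of $\mathcal F_6$ can be written as $\alpha I\beta J$ in several ways — for instance, when $i_{-1}=2$, the word $2I3J$ coincides with a word $\widetilde I23J$ for the factor $\widetilde I$ obtained by shifting the leading $2$ into the middle block — and whether $2I3J$, $3I2J$, $23I\!J$ or $32I\!J$ falls into $\mathcal E_{36}$, $\mathcal E_{46}$, $\mathcal E_{136}$, $\mathcal E_{146}$ or $\mathcal E_{246}$ is governed not by its gross shape but by the last one or two parts of $I$. So the decomposition \cref{decomp:Y1.Fk} has to be refined according to these tails, a canonical representative chosen for each composition, and then one must check that across all groupings and singletons (i) every composition of $\mathcal F_6$ is used exactly once, (ii) no $\mathcal B$- or $\mathcal C$-element is used twice, and (iii) the two unions come out precisely as claimed. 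Organising this finite but intricate bookkeeping — with the $i_1=2$ stratum, where the margin in $c$ is smallest and both $\mathcal C$-slots must be spent, as the pressure point — is the heart of the proof.
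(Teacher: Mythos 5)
Your overall strategy is the paper's (singletons for the cheap families via \cref{lem3}, groupings via \cref{lem:computation.Y1} for the rest), and your generic computation $c(\mathcal A,\mathcal B,\mathcal C)=17i_1-27$ for $\mathcal A=\{2I3J,\,I23J,\,I32J,\,3I2J\}$ with $\abs{I}=b+1$ checks out. But the plan as written is internally inconsistent in a way that breaks the charging argument. You first dispose of \emph{all} of $\mathcal E_{46}$ as singletons $I2J$ with $\mathcal B=\{I\!J\}$, and then your grouping re-uses $3I2J\in\mathcal E_{46}$ as one of its four members. That composition is then counted twice in the decomposition of $\mathcal F_6$, and---worse---the $\mathcal B$-element $3I\!J$ is charged twice: once as the element deleted from the singleton $3I2J$, and once as a member of the grouping's $\mathcal B=\{I3J,\,3I\!J\}$. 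Since each coefficient of $Y_{12}$ can be spent only once, this is not a notational slip but a genuine overdraft; you must either carve the words $3I2J$ (equivalently, the members of $\mathcal E_{46}$ whose first part is $3$) out of the singleton treatment, or redo the grouping without them, and in either case recompute $c$. A similar collision occurs among your $\mathcal C$-elements: the sets $\{2I\!J,\,I2J\}$ taken over all groupings are not pairwise disjoint (e.g.\ $2I''2J$ arises both as $2I\!J$ for $I=I''2$ and as $I'2J$ for $I'=2I''$), whereas the lemma requires $\bigcup\mathcal C$ to be tiled exactly once.

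The second gap is the one you name yourself: Step 3 is deferred entirely, together with all the boundary cases ($i_{-1}=3$, $i_{-1}=2$, and the re-parsings that move $2I3J$ or $3I2J$ into $\mathcal E_{136}$, $\mathcal E_{146}$ or $\mathcal E_{246}$), with the remark that they are ``absorbed by shrinking or enlarging $\mathcal A$, $\mathcal B$, $\mathcal C$.'' That is precisely where the content of this lemma lies. The paper resolves it by refining each $\mathcal E_S$ according to the last part $k$ of the relevant block (\cref{tab:decomp6}), producing seven classes whose groupings are indexed by $k\ge 4$ or by $k\in\{2,3\}$ and whose $\mathcal B$'s and $\mathcal C$'s are tailored---e.g.\ $\mathcal C=\{2IkJ,\,kI2J\}$ for $k\ge4$, $\{2I3J,\,3I2J\}$, and a single $\{2I2J\}$---so that the unions partition $\{I\!J\in\mathcal W_{2b+4}\colon\abs{J}=b\}$ and $\{I\!J\in\mathcal W_{2b+3}\colon\abs{J}=b\}$ with no repetition. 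Until you exhibit an analogous explicit refinement and verify the disjointness and the two unions, the lemma is not proved.
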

\begin{proof}
We decompose each set $\mathcal E_S$ in $\mathcal F_6$ as $\mathcal E_S^{1}\sqcup\mathcal E_S^{2}\sqcup\mathcal E_S^{3}$ in the way demonstrated in \cref{tab:decomp6}. Here we omitted the condition that the composition pair $(I,J)$ runs over all possibilities such that $\mathcal E_S^i\subseteq \mathcal F_6$ and $\abs{J}=b$.
\begin{table}[htbp]
\centering
\caption{The decompositions $\mathcal E_S=\mathcal E_S^{1}\sqcup\mathcal E_S^{2}\sqcup\mathcal E_S^{3}$.}
\label{tab:decomp6}
\begin{tabular}{@{} CLLLLL @{}}
\toprule
\addlinespace
i
& \mathcal E_{36}^{i} 
& \mathcal E_{46}^{i} 
& \mathcal E_{136}^{i} 
& \mathcal E_{146}^{i} 
& \mathcal E_{246}^{i}  \\ 
\addlinespace
\midrule
\addlinespace
1
& \underline{\{2Ik3J\colon k\ge 4\}}
& \underline{\{3Ik2J\colon k\ge 4\}}
& \underline{\{kI23J\colon k\ge 4\}}
& \underline{\{kI32J\colon k\ge 4\}}
& \{2I22J\}
\\[3pt]
\addlinespace
2
& \uwave{\{2I33J\}}
& \uuline{\{2Ik2J\colon k\ge 4\}}
& \uwave{\{3I23J\}}
& \uwave{\{3I32J\}}
& \uuline{\{kI22J\colon k\ge 4\}}
\\[3pt]
\addlinespace
3
& \{I3J\colon i_1,\,i_{-1}\ge 3\}
& \{I2J\colon i_1,\,i_{-1}\ge 4\}
& \dashuline{\{2I23J\}}
& \dashuline{\{2I32J\}}
& \dashuline{\{3I22J\}}
\\
\bottomrule
\end{tabular}
\end{table}
We deal with $\mathcal F_6$ in $7$ classes. In \cref{tab:decomp6}, we mark $4$ of the classes by underlines, double underlines, waves, and dashes, respectively, and leave each of the remaining $3$ classes $\mathcal E_S^i$ unmarked. In each case, we fix the composition pair $(I,J)$ and write $c=c(\mathcal A,\mathcal B,\mathcal C)$.
\begin{enumerate}
\item
Let $k\ge 4$. For $\mathcal A=\{2Ik3J,\ 3Ik2J,\ kI23J,\ kI32J\}$, we take \[
\mathcal B=\{3IkJ,\ kI3J\}
\quad\text{and}\quad
\mathcal C=\{2IkJ,\ kI2J\}.
\]
By \cref{lem:computation.Y1},
\begin{align*}
c&=
-2w_{2i_1k3}-2w_{3i_1k}-3w_{ki_13}-3w_{ki_13}+w_{3i_1k}+w_{ki_13}+\brk3{2\cdotp 4+4\cdotp 2+\frac{3}{2}\cdotp 2}(i_1-1)w_{1k3}\\
&=(17k-27)(i_1-1)>0.
\end{align*}
\item
Let $k\ge 4$. For $\mathcal A=\{2Ik2J,\,kI22J\}$, we take $\mathcal B=\{2IkJ,\,kI2J\}$ and $\mathcal C=\emptyset$. By \cref{lem:computation.Y1}, 
\[
c=-2w_{2i_1k}-4w_{ki_1}+w_{2i_1k}+w_{ki_1}+(2\cdotp 2+4\cdotp 2)(i_1-1)w_{1k}
=(7k-10)(i_1-1)>0.
\]
\item
For $\mathcal A=\{2I33J,\ 3I23J,\ 3I32J\}$, we take $\mathcal B=\{3I3J\}$ and $\mathcal C=\{2I3J,\ 3I2J\}$. By \cref{lem:computation.Y1},
\[
c=-2w_{2i_133}-3w_{3i_13}-3w_{3i_13}+w_{3i_13}+\brk3{2\cdotp 3+4+\frac{3}{2}\cdotp2}(i_1-1)w_{133}
=6(i_1-1)>0.
\]
\item
For $\mathcal A=\{2I23J,\ 2I32J,\ 3I22J\}$, we take $\mathcal B=\{2I3J,\ 3I2J\}$ and $\mathcal C=\{2I2J\}$. By \cref{lem:computation.Y1},
\[
c=-3w_{2i_13}-3w_{2i_13}-4w_{3i_1}+w_{2i_13}+w_{3i_1}+\brk3{2\cdotp 3+4\cdotp 2+\frac{3}{2}}(i_1-1)w_{13}
=2(i_1-1)>0.
\]
\item
For $\mathcal A\in\mathcal E_{36}^3$, we have $c(I3J,\,\emptyset,\,I\!J)=3i_1-7>0$ by \cref{lem3}.
\item
For $\mathcal A\in\mathcal E_{46}^{3}$, we have $c(I2J,\,I\!J,\emptyset)=5i_1-6>0$ by \cref{lem3}.
\item
For $\mathcal A\in\mathcal E_{246}^{1}$, we have $c(2I22J,\,2I2J,\,\emptyset)=0$ by \cref{lem3}.
\end{enumerate}
It is routine to check that the sets $\mathcal B$ and $\mathcal C$ are disjoint, and the unions have the desired form. 
\end{proof}

Now we can set up the $e$-positivity of $Y_1$.
\begin{lemma}\label{lem:e+:trinacria1.Y1}
The symmetric function $Y_1$ in \cref{prop:X.3sun:b+2.b.2} is $e$-positive.
\end{lemma}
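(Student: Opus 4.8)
The plan is to assemble the $e$-positivity of $Y_1$ from the pieces already prepared in \cref{lem3,lem4,lem5,lem6}, following the three-step charging argument (Grouping, Matching, Distinctness verification) laid out above. Recall that $Y_1=Y_{11}+Y_{12}+Y_{13}-N$ and that $N$ is supported on the $20$ pairwise disjoint sets $\mathcal E_S$. The compositions $K\in\mathcal E_S$ with $\abs S\le 1$ or $S\in\{\{1,5\},\{1,6\},\{2,6\}\}$ have already been handled directly, since for those $(Y_{11}-N)|_K=(2-l_K)w_K+w_{2K}\ge -w_K+w_{2K}\ge0$. So it remains to treat the union of the other $11$ sets, which is exactly $\mathcal F_3\sqcup\mathcal F_4\sqcup\mathcal F_5\sqcup\mathcal F_6$.

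First I would dispose of $\mathcal F_3=\mathcal E_{13}$. Here $\abs J=b+3$, and the relevant bound comes directly from \cref{lem3}: for a composition $I3J\in\mathcal E_{13}$ we get $c(I3J,\emptyset,I\!J)=3i_1-7\ge0$ whenever $i_1\ge3$, and for $I2J\in\mathcal E_{13}$ we get $c(I2J,I\!J,\emptyset)=5i_1-6>0$; the only remaining compositions in $\mathcal E_{13}$ have $i_1=2$ and the part adjacent to the size-$(b+1)$ prefix boundary equal to $2$, which forces a nested pair that I can pull off in a $\{2\}$-matching with one of the cases of \cref{lem3}$(1)$ or \cref{lem3}$(3)$, all of which give $c\ge0$. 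Then, for $k=4,5,6$, I invoke \cref{lem4}, \cref{lem5}, and \cref{lem6} respectively: each supplies a decomposition \cref{decomp:Y1.Fk} of $\mathcal F_k$ into subfamilies $\mathcal A(I,J;\theta)$ together with choices of $\mathcal B\subseteq\mathcal A(2)$ and $\mathcal C\subseteq\mathcal A(3)$ satisfying $c(\mathcal A,\mathcal B,\mathcal C)\ge0$, hence $s(\mathcal A,\mathcal B,\mathcal C)\ge0$ by \cref{lem:computation.Y1}.

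Summing over all subfamilies in all four families, the grouped contribution to $Y_1$ restricted to these compositions is
\[
\sum_{k=3}^{6}\ \sum_{(I,J;\theta)} s\bigl(\mathcal A(I,J;\theta),\mathcal B,\mathcal C\bigr)\ge 0.
\]
The crucial bookkeeping point — and the main obstacle — is the Distinctness verification step: I must check that the $\mathcal B$-sets borrowed from $Y_{12}$ across \emph{all} subfamilies of \emph{all four} families $\mathcal F_3,\dots,\mathcal F_6$ are pairwise disjoint, and likewise for the $\mathcal C$-sets borrowed from $Y_{13}$, so that no term of $Y_{12}$ or $Y_{13}$ is used twice. \cref{lem4,lem5,lem6} already record that within each family the $\mathcal B$'s and $\mathcal C$'s are disjoint with explicit unions: $\bigcup\mathcal B$ consists of $I\!J\in\mathcal W_{2b+4}$ with $\abs J=b+2$ and $j_1\ge3$ (for $k=4$), with $\abs J=b+1$ (for $k=5$), and with $\abs J=b$ (for $k=6$); the analogous statements hold for $\bigcup\mathcal C$ in $\mathcal W_{2b+3}$. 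Since these three union-sets are indexed by distinct values $\abs J\in\{b,b+1,b+2\}$ of the suffix size, they are manifestly pairwise disjoint, and similarly the $\mathcal C$-unions; $\mathcal F_3$ contributes $\mathcal C$-sets with $\abs J=b+3$ and a $\mathcal B$-set likewise disjoint from the others. Finally, each of these unions is a subset of the full index set of $Y_{12}$ (compositions $2I$ with $I\in\mathcal W_{2b+4}$) and of $Y_{13}$ (compositions $3I$ with $I\in\mathcal W_{2b+3}$), so the discarded positive remainder $Y_{12}|_{\text{rest}}+Y_{13}|_{\text{rest}}$ is itself $e$-positive. Combining the directly-handled compositions, the four families, and this leftover, every $e_\lambda$-coefficient of $Y_1$ is nonnegative, which is the assertion.

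Given all the heavy lifting in \cref{lem3}--\cref{lem6}, the write-up of \cref{lem:e+:trinacria1.Y1} is essentially an aggregation argument; I would keep it short, emphasizing only the cross-family disjointness check via the suffix-size invariant $\abs J$ and the observation that the unused parts of $Y_{12}$ and $Y_{13}$ have manifestly nonnegative coefficients.
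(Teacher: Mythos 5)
Your overall route is the same as the paper's: aggregate \cref{lem3,lem4,lem5,lem6} and verify that the $\mathcal B$-sets (and the $\mathcal C$-sets) borrowed from $Y_{12}$ (and $Y_{13}$) do not collide across the families $\mathcal F_3,\dots,\mathcal F_6$. However, two points need repair. First, your treatment of $\mathcal F_3=\mathcal E_{13}$ invents cases that do not occur. Every $K\in\mathcal E_{13}$ has prefixes of sizes exactly $b+1$ and $b+3$ and no others among $b+1,\dots,b+6$, so the part between the two prefix boundaries is forced to equal $2$ and $K=I2J$ with $\abs{I}=b+1$, $\abs{J}=b+3$, $j_1\ge 4$. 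This is precisely \cref{lem3}(1) with $k=1$, giving $c(I2J,I\!J,\emptyset)=5i_1-6>0$ for the whole family; there is no ``$I3J\in\mathcal E_{13}$'' case (that shape lives in $\mathcal E_{k(k+3)}$), and no leftover ``$i_1=2$ nested pair''. In particular $\mathcal F_3$ contributes no $\mathcal C$-set at all, contrary to what you assert in the disjointness discussion.

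Second, and more seriously, your justification of the distinctness step --- the step you yourself single out as the main obstacle --- is invalid as stated. A composition $H\in\mathcal W_{2b+4}$ may admit several factorizations $H=I\!J$ with different suffix sizes $\abs{J}$ (any two suffix sizes differing by at least $2$ can coexist), so ``indexed by distinct values of $\abs{J}$, hence manifestly pairwise disjoint'' is false as a general principle: without the clause $j_1\ge 3$ in the $k=4$ union, the sets $\{\abs{J}=b+2\}$ and $\{\abs{J}=b\}$ would intersect. The disjointness does hold, but only through the interplay of three facts: consecutive suffix sizes of a composition with all parts $\ge 2$ differ by at least $2$ (separating $b$ from $b+1$, and $b+1$ from $b+2$, and $b+2$ from $b+3$); the condition $j_1\ge 3$ for $k=4$ excludes a coexisting suffix of size $b$; and the condition $j_1\ge 4$ for $k=3$ excludes coexisting suffixes of sizes $b+1$ and $b$. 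The analogous checks are needed for the $\mathcal C_k$. Once these verifications are spelled out, your argument closes and coincides with the paper's proof of \cref{lem:e+:trinacria1.Y1}.
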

\begin{proof}
\Cref{lem3,lem4,lem5,lem6} completes the first two steps of our strategy at the beginning of \cref{sec:Y1}. For $3\le k\le 6$, denote by $\mathcal B_k$ (resp., $\mathcal C_k$) the union $\bigcup\mathcal B$ (resp., $\bigcup\mathcal C$) for $\mathcal F_k$. By \cref{lem3,lem4,lem5,lem6},
\begin{align*}
\mathcal B_3
&=\{I\!J\in\mathcal W_{2b+4}\colon\abs{J}=b+3,\ j_1\ge 4\},
&
\mathcal C_3
&=\emptyset,
\\
\mathcal B_4
&=\{I\!J\in\mathcal W_{2b+4}\colon\abs{J}=b+2,\ j_1\ge 3\},
&
\mathcal C_4
&=\{I\!J\in\mathcal W_{2b+3}\colon\abs{J}=b+2,\ j_1\ge 3,\ i_1\ge 3\},
\\
\mathcal B_5
&=\{I\!J\in\mathcal W_{2b+4}\colon\abs{J}=b+1\},
&
\mathcal C_5
&=\{I\!J\in\mathcal W_{2b+3}\colon\abs{J}=b+1,\ i_1\ge 3,\ i_{-1}\ge 3\},
\\
\mathcal B_6
&=\{I\!J\in\mathcal W_{2b+4}\colon\abs{J}=b\},
&
\mathcal C_6
&=\{I\!J\in\mathcal W_{2b+3}\colon\abs{J}=b\}.
\end{align*}
It is clear that these sets $\mathcal B_k$ and $\mathcal C_k$ are pairwise disjoint. This completes the proof.
\end{proof}

\subsection{$Y_0$ is $e$-positive}\label{sec:Y0}
In this section, we write $\mathcal W=\mathcal W_{2b+7}$, and let $L_K$ be the set of numbers $l\in[3]$ such that $K$ has a suffix of size $b+l$. Then $L_K\in\brk[c]1{\{1\},\, \{2\},\, \{3\}, \,\{1, 3\}}$. Let 
\[
\mathcal K_0=\{K\in\mathcal W\colon L_K\cap\{2, 3\}\ne \emptyset\}.
\]
If $K\not\in\mathcal K_0$, then \cref{def:f} gives 
\[
f(K)\ge 2r_{k_1}-r_{k_1}r_{j_1}\chi(1\in L_K)
\ge r_{k_1}(2-r_{j_1})\ge 0.
\]
When $K\in\mathcal K_0$, 
we can factorize $K$ uniquely as 
\begin{equation}\creflabel[def]{def:K=IJ:Y0}
K=I\!J, \quad\text{where $J\vDash b+l_K$ and $l_K\in\{2, 3\}$}.
\end{equation} 
It follows that $\abs{I}=b+7-l_K\ge b+4$. Therefore, if $k_1\le 3$, then $k_2=i_2$. Then \cref{def:f} reduces to
\begin{equation}\label{def:f.l=23}
f(K)=2r_{i_1}+\delta_{i_1} r_{i_2}-l_K r_{i_1}r_{j_1}-r_{i_1} r_{j_2}\chi(1\in L_K), 
\end{equation}
where the condition $1\in L_K$ is equivalent to ``$l_K=3$ and $j_1=2$,'' and 
\[
\delta_x=4\chi(x=2)+\frac{3}{2}\chi(x=3).
\]
Define a map $\varphi\colon\mathcal K_0\to\mathcal W$ by $\varphi(K)=J\!I$, where $I$ and $J$ are defined by \cref{def:K=IJ:Y0}. Then $\varphi(K)$ and~$K$ have the same underlying partition, and $\varphi$ is injective. Let 
\[
F(K)=f(K)+f\brk1{\varphi(K)}.
\]
The \emph{preimage} $\varphi^{-1}(H)$ is the set of compositions $K$ such that $\varphi(K)=H$. Let \[
\mathcal K^-=\{K\in\mathcal W\colon f(K)<0\}, 
\quad
\mathcal K^+=\{K\in\mathcal W\colon f(K)\ge 0\}, 
\quad\text{and}\quad
\mathcal H=\{K\in\mathcal K^+\colon\varphi^{-1}(K)=\emptyset\}.
\] 

Our proof for the $e$-positivity of $Y_0$ adopts the following \emph{progressive repair method}. First, we give a coarse solution for each composition $K\in\mathcal K^-$: possibly $F(K)\ge 0$. Second, we provide a modified solution for those with $F(K)<0$: possibly $F(K)+f(U)\ge 0$ for some $U\in\mathcal H$. Thirdly, for the remaining compositions $K$, we modify our solution again: possibly $F(K)+F(K')\ge 0$ for another composition $K'\in\mathcal K^-$. At last, for each composition $K$ that is left unsolved, we will find an extra composition $V\in\mathcal H$ and show that $F(K)+F(K')+f(V)\ge 0$. Briefly speaking, we shall construct an involution $\xi$ on $\mathcal K^-$ and show the following:
\begin{description}
\item[Step 1. Solving coarsely]
If $\xi(K)=K$, then $F(K)\ge 0$ or $F(K)+f(U)>0$ for some $U\in\mathcal H$. 
\item[Step 2. Repairing]
If $\xi(K)=K'\ne K$, then either $F(K)+F(K')>0$, or $F(K)+F(K')+f(V)\ge 0$ for some $V\in\mathcal H$.
\item[Step 3. Distinctness verification] 
All the compositions $U$ and $V$ are distinct.
\end{description}
Since $\varphi$ is injective, proving these statements yields the $e$-positivity of $Y_0$.

Now let us start the proof. Note that $\mathcal K^-\subseteq \mathcal K_0$. We characterize $\mathcal K^-$ first.

\begin{proposition}\label{prop:f<0.Y0}
We have $\mathcal K^-=\mathcal A\sqcup \mathcal B\sqcup \mathcal C\sqcup \mathcal D\sqcup \mathcal E$, where
\begin{align*}
\mathcal A
&=\{K=I\!J\in\mathcal W\colon J\vDash b+3,\ (i_1, j_1)=(2, 3),\ i_2\ge 6\},\\
\mathcal B
&=\{K=I\!J\in\mathcal W\colon J\vDash b+2,\ (i_1, j_1)=(3, 2)\},\\
\mathcal C
&=\{K=I\!J\in\mathcal W\colon J\vDash b+3,\ i_1=3,\ j_1\le 3i_2-3\},\\
\mathcal D
&=\{K=I\!J\in\mathcal W\colon J\vDash b+3,\ (i_1, j_1)=(2, 2)\},\\
\mathcal E
&=\{K=I\!J\in\mathcal W\colon J\vDash b+l_K,\ l_K\in\{2,3\},\ i_1\ge 4\}.
\end{align*}
\end{proposition}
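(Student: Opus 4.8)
The plan is to carry out a direct case analysis on the sign of $f(K)$, using the reduced formula \cref{def:f.l=23} for $K\in\mathcal K_0$. Since $\mathcal K^-\subseteq\mathcal K_0$, every $K\in\mathcal K^-$ factorizes uniquely as $K=I\!J$ with $J\vDash b+l_K$ and $l_K\in\{2,3\}$, so we may organize the analysis by the value of $l_K$ and the size of $i_1$. First I would dispose of the case $i_1\ge 4$: here $r_{i_1}\le 4/3$, and I claim $f(K)<0$ \emph{always}, which identifies the set $\mathcal E$. Indeed, from \cref{def:f.l=23} one has $f(K)=r_{i_1}\brk1{2-l_K r_{j_1}-r_{j_2}\chi(1\in L_K)}+\delta_{i_1}r_{i_2}$, and $\delta_{i_1}=0$ when $i_1\ge 4$; since $l_K\ge 2$ and $r_{j_1}\ge 1$, the bracket is at most $2-2r_{j_1}\le 0$, and it is strictly negative because $r_{j_1}>1$ (as $j_1\ge 2$). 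So $f(K)<0$ precisely when $i_1\ge 4$, giving $\mathcal E$.

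Next I would treat $i_1=2$ and $i_1=3$ separately, each split further according to $l_K$ and the first part $j_1$ of the suffix. For $i_1=2$ we have $r_{i_1}=2$ and $\delta_{i_1}=4$, so \cref{def:f.l=23} becomes $f(K)=4+4r_{i_2}-2l_Kr_{j_1}-2r_{j_2}\chi(1\in L_K)$; since $r_{i_2}\ge 1$ the first two terms contribute at least $8$, and one checks this can only be overwhelmed when $l_K=3$. Within $l_K=3$, if $j_1\ge 3$ then $1\notin L_K$ and $2l_Kr_{j_1}=6r_{j_1}\le 9$, still not enough unless we push $r_{i_2}$ down to nearly $1$, i.e. $i_2$ large, and $j_1=3$ (so $r_{j_1}=3/2$), which is exactly the set $\mathcal A$; one verifies $f(K)<0\iff i_2\ge 6$ there. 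If instead $j_1=2$, then $1\in L_K$, $r_{j_1}=2$, $r_{j_2}=r_{j_2}$, and $f(K)=4+4r_{i_2}-12-2r_{j_2}<0$ automatically since $4r_{i_2}\le 8$ forces $4+8-12-2r_{j_2}=-2r_{j_2}<0$ — wait, $r_{i_2}$ can be $2$ only if $i_2=2$, giving exactly $0-2r_{j_2}<0$; for larger $i_2$ it is even smaller — so every such $K$ lies in $\mathcal K^-$, which is $\mathcal D$. The case $l_K=2$ with $i_1=2$ should always give $f(K)\ge 0$. Symmetrically, for $i_1=3$ we have $r_{i_1}=3/2$, $\delta_{i_1}=3/2$, and \cref{def:f.l=23} reads $f(K)=3+\tfrac32 r_{i_2}-\tfrac32 l_Kr_{j_1}-\tfrac32 r_{j_2}\chi(1\in L_K)$. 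When $l_K=2$ this is $3+\tfrac32 r_{i_2}-3r_{j_1}$, which is negative exactly when $3r_{j_1}>3+\tfrac32 r_{i_2}$, i.e. $r_{j_1}>1+\tfrac12 r_{i_2}$; rewriting in terms of $j_1$ and $i_2$ via $r_x=x/(x-1)$ yields the condition $j_1\le 3i_2-3$ after clearing denominators, which is the set $\mathcal C$ (here $J\vDash b+2$). When $l_K=3$ and $i_1=3$: if $j_1\ge 3$ one gets $3+\tfrac32 r_{i_2}-\tfrac92 r_{j_1}$, and since $r_{i_2}\le 2$ and $r_{j_1}\ge 3/2$ this is at most $3+3-\tfrac{27}{4}<0$, so all such $K$ are in $\mathcal K^-$ — this is $\mathcal B$ (the constraint $(i_1,j_1)=(3,2)$ in $\mathcal B$ has $J\vDash b+2$, so I should double-check the bookkeeping: $\mathcal B$ as stated has $l_K=2$, $i_1=3$, $j_1=2$, which from the $l_K=2$ formula gives $3+\tfrac32 r_{i_2}-6<0$ automatically since $\tfrac32 r_{i_2}\le 3$, confirming $\mathcal B\subseteq\mathcal K^-$); and if $j_1=2$ with $l_K=3$ then $1\in L_K$ and one checks the resulting composition actually has $i_1=3,j_1\le 3i_2-3$ trivially or lands in $\mathcal C$/$\mathcal D$ after re-factorization — I must make sure each $K\in\mathcal K^-$ is counted exactly once.

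The routine but delicate part will be verifying that the five sets $\mathcal A,\mathcal B,\mathcal C,\mathcal D,\mathcal E$ are pairwise disjoint and that their union is \emph{all} of $\mathcal K^-$ with no composition missed or double-counted. Disjointness is largely automatic from the defining data (the pair $(i_1,j_1)$ together with $|J|$ distinguishes $\mathcal A,\mathcal B,\mathcal C,\mathcal D$ in most cases, and $\mathcal E$ is the unique block with $i_1\ge 4$), but the overlap between $\mathcal C$ (with $i_1=3$, $j_1=3$) and any $l_K=3$ sub-case needs care since a composition with $i_1=3$ and a short suffix might admit two legal factorizations — however, \cref{def:K=IJ:Y0} \emph{pins down} $l_K$ uniquely as the largest suffix-size in $\{b+2,b+3\}$, so the factorization and hence the classification is canonical. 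I expect the main obstacle to be exactly this exhaustiveness check: translating each inequality $f(K)<0$ into the clean arithmetic conditions stated (especially the "$j_1\le 3i_2-3$" in $\mathcal C$ and the "$i_2\ge 6$" in $\mathcal A$), handling the boundary cases where $i_2$ or $j_2$ equals $2$ and $r$ attains its maximum, and confirming that no $K$ with $i_1\in\{2,3\}$ and $l_K=2$ and the complementary range of $j_1$ sneaks in. Once the case tree is laid out, each leaf is a one-line inequality, so the write-up reduces to tabulating the six $(i_1,l_K)$-regimes and reading off the sign.
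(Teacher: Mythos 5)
Your overall route is the same as the paper's: restrict to $\mathcal K_0$, apply the reduced formula \cref{def:f.l=23}, and run a case analysis on $(i_1,l_K,j_1)$ using the bounds $1<r_i\le 2$. Your treatment of $i_1\ge 4$ (giving $\mathcal E$) and of $i_1=2$ (giving $\mathcal A$ when $(l_K,j_1)=(3,3)$ with the threshold $i_2\ge 6$, and $\mathcal D$ when $(l_K,j_1)=(3,2)$, and nonnegativity when $l_K=2$) agrees with the paper's proof.

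The $i_1=3$ branch, however, is genuinely wrong, in two ways. First, you attach the defining conditions to the wrong value of $l_K$. For $l_K=2$ the inequality $r_{j_1}>1+\tfrac12 r_{i_2}$ does not clear to $j_1\le 3i_2-3$; it clears to $j_1<3-2/i_2$, i.e.\ $j_1=2$ (with $i_2\ge 3$), which is essentially the set $\mathcal B$ — note $\mathcal B$ has $J\vDash b+2$. The condition $j_1\le 3i_2-3$ comes from the $l_K=3$ inequality $3r_{j_1}>2+r_{i_2}$ and defines $\mathcal C$, which has $J\vDash b+3$. Second, your claim that every $K$ with $(l_K,i_1)=(3,3)$ and $j_1\ge 3$ lies in $\mathcal K^-$ rests on the false inequality $r_{j_1}\ge 3/2$; since $x\mapsto r_x$ is decreasing, $r_{j_1}\le 3/2$ there, and the claim fails: for $i_2=2$, $j_1=4$ one gets $f(K)=3+3-\tfrac92\cdot\tfrac43=0$. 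This is precisely the regime where the threshold $j_1\le 3i_2-3$ is indispensable, so it cannot be absorbed into a crude bound; and the identification of this subcase with $\mathcal B$ is incorrect. You also defer $(l_K,i_1,j_1)=(3,3,2)$ to a ``re-factorization'' that does not exist — the factorization \cref{def:K=IJ:Y0} is canonical, and such $K$ simply lie in $\mathcal C$ because $j_1=2\le 3i_2-3$ holds for every $i_2\ge 2$. Finally, be careful about deducing strict negativity from non-strict bounds: for $(l_K,i_1,j_1)=(2,3,2)$ your own formula gives $f(K)=\tfrac32 r_{i_2}-3$, which equals $0$ (not $<0$) when $i_2=2$, so this boundary composition is a genuine issue for the membership $\mathcal B\subseteq\mathcal K^-$ and not something ``automatic''.
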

\begin{proof}
For $K\in\mathcal K^-$, we will compute $f(K)$ by using \cref{def:f.l=23}, and determine its sign using the uniform bounds $1<r_i\le 2$ for all $i$. If $i_1\ge 4$, then
\[
f(K)
=2r_{i_1}
-l_K r_{i_1}r_{j_1}
-r_{i_1}r_{j_2}\chi(1\in L_K)
\le 2r_{i_1}(1-r_{j_1})<0.
\]
Below we can suppose that $i_1\le 3$. We proceed according to the value of $l_K$. Suppose that $l_K=2$. 
\begin{itemize}
\item
If $i_1=2$, then $f(K)=4+4r_{i_2}-4r_{j_1}>4+4-4\cdotp 2=0$.
\item
If $i_1=3$, then $f(K)=3+r_3r_{i_2}-3r_{j_1}-r_3r_{j_2}\chi(j_1=2)$.
In this case, 
\vskip 4pt
\begin{itemize}
\item
if $j_1=2$, then $f(K)=r_3(r_{i_2}-r_{j_2})-3<r_3(2-1)-3<0$; 
\item
if $j_1\ge 3$, then $f(K)>3+r_3-3r_3=0$.
\end{itemize}
\end{itemize}
Now suppose that $l_K=3$.
\vskip 4pt
\begin{itemize}
\item
If $i_1=2$, then $f(K)=4+4r_{i_2}-6r_{j_1}-2r_{j_2}\chi(1\in L_K)$. In this case,
\vskip 4pt
\begin{itemize}
\item
if $j_1=2$, then 
$f(K)=4r_{i_2}-8-2r_{j_2}<4\cdotp 2-8-2<0$;
\item
if $j_1=3$, then $f(K)=4r_{i_2}-5$, which is negative if and only if $i_2\ge 6$;
\item
if $j_1\ge 4$, then $f(K)=4+4r_{i_2}-6r_{j_1}>4+4-6r_4=0$.
\end{itemize}
\item
If $i_1=3$, then $f(K)=3+r_3r_{i_2}-3r_3r_{j_1}-r_3r_{j_2}\chi(j_1=2)$. In this case,
\vskip 4pt
\begin{itemize}
\item
if $j_1=2$, then $f(K)=r_3(r_{i_2}-r_{j_2}-4)<r_3(2-1-4)<0$;
\item
if $j_1\ge 3$, then $f(K)=r_3(2+r_{i_2}-3r_{j_1})$, which is negative if and only if $j_1\le 3i_2-3$.
\end{itemize}
\end{itemize}
In summary, we obtain the desired characterization for $\mathcal K^-$.
\end{proof}

\begin{lemma}\label{lem:f(varphi)>=0}
We have $\varphi(\mathcal K^-)\subseteq\mathcal K^+$.
\end{lemma}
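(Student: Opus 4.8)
The plan is to verify, for every $K\in\mathcal K^-$, that $f(\varphi(K))\ge 0$, by running through the five classes of the partition $\mathcal K^-=\mathcal A\sqcup\mathcal B\sqcup\mathcal C\sqcup\mathcal D\sqcup\mathcal E$ furnished by \cref{prop:f<0.Y0}. Throughout, write $K=I\!J$ as in \cref{def:K=IJ:Y0}, so that $\varphi(K)=J\!I$, the first part of $\varphi(K)$ equals $j_1$, and $|I|=b+7-l_K\in\{b+4,\,b+5\}$. The crucial preliminary remark is that, since $|I|\ge b+4>b+3$, every suffix of $\varphi(K)=J\!I$ of size in $\{b+1,b+2,b+3\}$ is automatically a suffix of $I$; hence both the question whether $\varphi(K)\in\mathcal K_0$ and---when it is---the shape of the canonical factorization of $\varphi(K)$ depend only on the decreasing sequence of suffix sizes of $I$, which in turn is governed by $i_1$ (and, for class $\mathcal A$, also by $i_2$). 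I would split the five classes into two groups accordingly.

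First consider classes $\mathcal C$ and $\mathcal E$, where $i_1\ge 3$. In class $\mathcal C$ one has $l_K=3$ and $i_1=3$, so the suffix sizes of $I$ run $b+4,\ b+1,\ \dots$; in class $\mathcal E$ one has $i_1\ge 4$, so they run $|I|,\ |I|-i_1\le b+1,\ \dots$. In both cases the sizes $b+2$ and $b+3$ are skipped, so $\varphi(K)$ has no suffix of size $b+2$ or $b+3$, i.e.\ $\varphi(K)\notin\mathcal K_0$. The observation recorded just before \cref{def:K=IJ:Y0}, that $f(K)\ge 0$ whenever $K\notin\mathcal K_0$, then applies verbatim to $\varphi(K)$ and gives $f(\varphi(K))\ge 0$.

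Next consider classes $\mathcal A$, $\mathcal B$, $\mathcal D$, where $i_1\in\{2,3\}$ and, in each case, $|I|-i_1=b+2$. Then $I$ has a unique suffix of size $b+2$, namely $(i_2,i_3,\dots)$, and the next smaller suffix size is $b+2-i_2\le b$, so $I$---and hence $\varphi(K)$---has no suffix of size $b+1$ or $b+3$. Therefore $\varphi(K)\in\mathcal K_0$ with $l_{\varphi(K)}=2$ and canonical factorization $\varphi(K)=I'J'$, where $I'$ is the composition $J$ with the part $i_1$ appended and $J'=(i_2,i_3,\dots)$; in particular the first part of $\varphi(K)$ is $j_1\le 3$ and the first part of $J'$ is $i_2$. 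It then suffices to feed these data into the $l_K=2$ analysis carried out inside the proof of \cref{prop:f<0.Y0}: for $\mathcal B$ and $\mathcal D$ we have $j_1=2$, which is the sub-case ``$l_K=2$, $i_1=2$'' there, giving $f(\varphi(K))>0$; for $\mathcal A$ we have $j_1=3$ while the first part of $J'$ equals $i_2\ge 6\ge 3$, which is the sub-case ``$l_K=2$, $i_1=3$, $j_1\ge 3$'', again giving $f(\varphi(K))>0$. (Alternatively, evaluating \cref{def:f.l=23} at $\varphi(K)$ directly yields $f(\varphi(K))=3+\tfrac{3}{2} r_{j_2}-3r_{i_2}\ge\tfrac{9}{10}>0$ for $\mathcal A$, using $r_{i_2}\le\tfrac{6}{5}$, and $f(\varphi(K))=4+4r_{j_2}-4r_{i_2}>0$ for $\mathcal B$ and $\mathcal D$.) The finitely many boundary configurations---$J$ a single part, or $\ell(I)=2$---occur only for small $b$ and are checked directly.

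I expect the main obstacle to be not any single estimate but the bookkeeping just sketched: one must extract, from the defining condition of the class of $\mathcal K^-$ containing $K$, the exact decreasing list of suffix sizes of $I$, and from it deduce both the status of $\varphi(K)$ relative to $\mathcal K_0$ and, when $\varphi(K)\in\mathcal K_0$, which parts of $K$ play the roles of $i_1,i_2,j_1,j_2$ in the formula \cref{def:f.l=23} for $\varphi(K)$. The one spot where a quantitative hypothesis genuinely does work is class $\mathcal A$: the assumption $i_2\ge 6$---which is exactly what forced $f(K)<0$---is precisely what makes the new ``$j_1$'' of $\varphi(K)$ at least $3$ (equivalently, keeps $r_{i_2}\le\tfrac{6}{5}$), so that $f(\varphi(K))$ stays nonnegative; everywhere else the crude bounds $1<r_i\le 2$ suffice.
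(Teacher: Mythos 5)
Your argument is correct and is essentially the paper's own proof in contrapositive form: both hinge on the observation that every suffix of $\varphi(K)=J\!I$ of size at most $b+3$ is a suffix of $I$ (so the membership of $\varphi(K)$ in $\mathcal K_0$ and its canonical factorization are controlled by $i_1$), and both then invoke \cref{prop:f<0.Y0}. The paper assumes $f(\varphi(K))<0$, derives $i_1=7-l_K-l_{\varphi(K)}$ and reaches a contradiction, whereas you verify $f(\varphi(K))\ge 0$ directly class by class; the content is the same, and your explicit estimates (e.g.\ $f(\varphi(K))=3+\tfrac32 r_{h_2}-3r_{i_2}$ on $\mathcal A$) check out, including the degenerate case $\ell(J)=1$.
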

\begin{proof}
Let $K=I\!J$, where $I$ and $J$ are defined by \cref{def:K=IJ:Y0}. Let $H=\varphi(K)=J\!I$, where $\abs{I}=b+7-l_K\in\{b+4, \,b+5\}$. Assume to the contrary that $f(H)<0$. Then $H\in\mathcal K_0$, and there exists $l_H\in\{2, 3\}$ such that 
\begin{equation}\label{pf.fH>0:i1}
i_1=\abs{I}-(b+l_H)=7-l_K-l_H.
\end{equation}

If $l_K=3$, then \cref{pf.fH>0:i1} implies $l_H=i_1=2$. On one hand, since $f(K)<0$ and $(l_K, i_1)=(3, 2)$, \cref{prop:f<0.Y0} gives either $j_1=2$ or ``$j_1=3$ and $i_2\ge 6$''. On the other hand, since $f(H)<0$ and $l_H=2$, \cref{prop:f<0.Y0} gives either $j_1\ge 4$ or ``$j_1=3$ and $i_2=2$'', a contradiction. 

Otherwise $l_K=2$. In this case, if $l_H=3$, then \cref{pf.fH>0:i1} implies $i_1=2$. Then \cref{prop:f<0.Y0} gives $f(K)\ge 0$, a contradiction. Thus $l_H=2$, and \cref{pf.fH>0:i1} implies $i_1=3$. Then \cref{prop:f<0.Y0} gives $j_1=2$. On the other hand, since $f(H)<0$ and $l_H=2$, \cref{prop:f<0.Y0} gives $j_1=h_1\ge 3$, a contradiction. 

In conclusion, the assumption $f(H)<0$ is false and we obtain a proof.
\end{proof}

\cref{lem:F} gives a formula for $F(K)$.

\begin{lemma}\label{lem:F}
Let $K=I\!J\in\mathcal K_0$ be defined by \cref{def:K=IJ:Y0}. If $l_K+i_1\ge 5$, then
\[
F(K)
=\sum_{h\in\{i,\, j\}}(2r_{h_1}+\delta_{h_1}r_{h_2})
-l_K r_{i_1}r_{j_1}-r_{i_1}r_{j_2}\chi(1\in L_K)
-(7-l_K-i_1)r_{j_1}r_{i_2}\chi(i_1\le 4).
\]
\end{lemma}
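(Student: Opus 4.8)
The plan is to compute $F(K) = f(K) + f(\varphi(K))$ directly from the reduced formula \cref{def:f.l=23} for $f$, exploiting the hypothesis $l_K + i_1 \ge 5$ to control which terms of \cref{def:f} survive. First I would record that, since $K = I\!J$ with $J \vDash b+l_K$, $l_K\in\{2,3\}$, we have $\abs{I} = b + 7 - l_K \ge b+4$, so $I$ has length at least two (as $I\in\mathcal W$ forces $i_1\le 5$); in particular $k_2 = i_2$ whenever $i_1\le 3$, which is what licensed \cref{def:f.l=23} in the first place. Writing $H = \varphi(K) = J\!I$, the composition $H$ is factored by \cref{def:K=IJ:Y0} (if it lies in $\mathcal K_0$) as $H = J I'$ where the suffix of size $b + l_H$ is a suffix of $I$; the point of the hypothesis $l_K + i_1 \ge 5$ is that it pins down $i_1 = 7 - l_K - l_H$ exactly as in \cref{pf.fH>0:i1}, equivalently $l_H = 7 - l_K - i_1 \le 2$, so that when $i_1 \le 4$ the composition $H$ genuinely lies in $\mathcal K_0$ with $h_1 = j_1$, and when $i_1 \ge 5$ (forcing $l_K + i_1 \ge 7$, impossible here unless... ) one checks $H \notin \mathcal K_0$ and the last term drops out — this is the source of the factor $\chi(i_1 \le 4)$.

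Next I would expand each summand. For $f(K)$ use \cref{def:f.l=23} verbatim: $f(K) = 2r_{i_1} + \delta_{i_1} r_{i_2} - l_K r_{i_1} r_{j_1} - r_{i_1} r_{j_2}\chi(1\in L_K)$, noting $\delta_{i_1} = 4\chi(i_1=2) + \tfrac32\chi(i_1=3)$ matches the $\delta_x$ of the preceding display and is zero when $i_1 \ge 4$, consistent with \cref{def:f.l=23} degenerating appropriately. For $f(H)$, apply \cref{def:f} (the master formula) to $H = J I'$: its first part is $j_1$, its second part is $j_2$, so the leading terms give $2 r_{j_1} + \delta_{j_1} r_{j_2}$ (here one must check that the "$k_2$" in \cref{def:f} for $H$ is indeed $j_2$, i.e. $J$ has length $\ge 2$ when $j_1 \le 3$ — which follows since $\abs{J} = b + l_K$ is large); and the subtracted sum in \cref{def:f} for $H$, namely $r_{j_1}\sum_{l=1}^3 l\cdot r_{(\text{first part of suffix})}\chi(\dots)$, contributes exactly $-(7 - l_K - i_1) r_{j_1} r_{i_2}$ when $H \in \mathcal K_0$ and $i_1 \le 4$ (the suffix of $H$ of the relevant size begins with $i_2$, and its "$l$" value is $l_H = 7 - l_K - i_1$), and contributes nothing otherwise. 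Adding $f(K) + f(H)$ and collecting the two "leading" pairs into $\sum_{h\in\{i,j\}}(2r_{h_1} + \delta_{h_1}r_{h_2})$ yields the claimed identity, provided one also verifies there is no extra $\chi(1\in L_H)$ term surviving — which holds because $l_H \le 2$ forces $1 \notin L_H$.

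The main obstacle I anticipate is the bookkeeping around \emph{which composition plays the role of $J$ in the second-part term} of \cref{def:f} when we feed in $H = \varphi(K)$, and relatedly confirming that the reduced form \cref{def:f.l=23} (which assumed $i_1 \le 3 \Rightarrow k_2 = i_2$) applies to $H$; this needs the length bounds on $I$ and $J$ induced by $\abs{I} \ge b+4$ and $\abs{J} = b + l_K \ge b+2$, together with the parity/size constraints forcing lengths $\ge 2$ in the relevant ranges. A secondary subtlety is the boundary case $i_1 \ge 5$: then $l_K + i_1 \ge 5$ is automatic, but one must separately argue $H \notin \mathcal K_0$ (so $f(H) = f\big|_{\text{non-}\mathcal K_0 \text{ branch}}$, which the formula still accommodates via $\delta_{h_1} = 0$ and the vanishing $\chi(i_1\le 4)$) — I would handle this by noting $\abs{I} = b + 7 - l_K \le b + 5$, so $i_1 \ge 5$ with $I\in\mathcal W$ forces $I = (i_1)$ a single part, whence $H = J(i_1)$ has a suffix of size $i_1 \in\{5,\dots\}$, incompatible with the suffix sizes $b+2, b+3$ that define $\mathcal K_0$ for all $b \ge 1$. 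Once these structural facts are in place, the identity is a direct substitution with no real inequality work, matching the lemma's purely algebraic character.
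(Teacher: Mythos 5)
Your overall route is the same as the paper's: compute $f(\varphi(K))$ from \cref{def:f} using the structure of $\varphi(K)=J\!I$ and add it to \cref{def:f.l=23}. The key structural observations (every suffix of $J\!I$ of size at most $b+3$ lies inside $I$, so the only candidate index is $l_H=7-l_K-i_1$, and the hypothesis $l_K+i_1\ge 5$ rules out a second candidate arising from a prefix of $I$ of size $i_1+i_2=4$) are the right ones, and the displayed formula you arrive at is correct. However, your rule for when the subtracted sum of \cref{def:f} contributes is miscalibrated, and it fails in exactly the cases that matter downstream. You assert that for $i_1\le 4$ the composition $H=\varphi(K)$ lies in $\mathcal K_0$, that the sum contributes $-(7-l_K-i_1)r_{j_1}r_{i_2}$ ``when $H\in\mathcal K_0$ and $i_1\le 4$'' and ``nothing otherwise,'' and that ``$l_H\le 2$ forces $1\notin L_H$.'' All three claims break when $7-l_K-i_1=1$, i.e.\ $(l_K,i_1)\in\{(2,4),(3,3)\}$: there $L_H=\{1\}$, so $H\notin\mathcal K_0$ (membership requires $L_H\cap\{2,3\}\ne\emptyset$) and $1\in L_H$, yet the $l=1$ term of \cref{def:f} still fires and supplies precisely the $-r_{j_1}r_{i_2}$ appearing in the lemma. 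Followed literally, your dichotomy would drop this term and give $f(H)=2r_{j_1}+\delta_{j_1}r_{j_2}$, hence a wrong $F(K)$, in the very cases $(l_K,i_1)=(2,4)$ and $(3,3)$ that drive \cref{prop:F.l=2,prop:F.l=3}. The correct statement is that the sum contributes $-l\cdot r_{j_1}r_{i_2}$ whenever $l=7-l_K-i_1\in[3]$, irrespective of $\mathcal K_0$-membership, and this is what yields $-(7-l_K-i_1)r_{j_1}r_{i_2}\chi(i_1\le 4)$ uniformly.

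Two smaller slips: ``$I\in\mathcal W$ forces $i_1\le 5$'' is false ($i_1$ can be as large as $\abs{I}$), and your treatment of $i_1\ge 5$ via ``$I$ must be a single part'' is also false (take $I=5\,2\dotsm$, say). Neither claim is needed: for $i_1\le 4$ the factor $I$ has length at least $2$ simply because $i_1\le 4<b+4\le\abs{I}$ (so $i_2$ exists and $k_2=i_2$ when $i_1\le 3$), and for $i_1\ge 5$ one has $7-l_K-l\le 4<i_1$ for every $l\in[3]$, so $I$ has no prefix of the required size, $L_H=\emptyset$, and the last term vanishes — which is exactly why it carries the factor $\chi(i_1\le 4)$.
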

\begin{proof}
Since $\abs{I}=b+7-l_K\in\{b+4,\,b+5\}$ and since $l_K+i_1\ge 5$, we can infer from \cref{def:f.l=23} that
\[
f(\varphi(K))
=2r_{j_1}+\delta_{j_1}r_{j_2}-(7-l_K-i_1)r_{j_1}r_{i_2}\chi(i_1\le 4).
\]
Adding it with \cref{def:f.l=23} yields the desired formula.
\end{proof}

We then focus on the set $\mathcal S=\{K\in\mathcal K_0\colon f(K)<0,\ F(K)<0\}=\mathcal S_2\sqcup\mathcal S_3$, where 
\[
\mathcal S_i=\{K\colon\mathcal S\colon l_K=i\}.
\]
We characterize $\mathcal S_2$ and $\mathcal S_3$ in \cref{prop:F.l=2,prop:F.l=3}, respectively.

\begin{proposition}\label{prop:F.l=2}
Let $K=I\!J\in\mathcal K_0$ be defined by \cref{def:K=IJ:Y0}. Then $K\in\mathcal S_2$ if and only if one of the following happens:
\begin{enumerate}
\item
$(i_1, i_2)=(4, 2)$ and $j_1\ge 4$, 
\item
$(i_1, i_2)=(4, 3)$ and $j_1\in\{4, 5\}$.
\end{enumerate}
Moreover, in either of these cases, we have $F(K)=(8+3r_{i_2}r_{j_1}-2r_{j_1})/3$.
\end{proposition}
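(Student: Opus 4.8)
The plan is to reduce the statement to the closed formula for $F(K)$ supplied by \cref{lem:F} and then run a short sign analysis governed by the uniform bounds $1<r_m\le 2$ and the fact that $r_m$ decreases in $m$. Since $\mathcal S_2\subseteq\mathcal K^-$ and $l_K=2$ on all of $\mathcal S_2$, \cref{prop:f<0.Y0} already confines the search: every $K\in\mathcal S_2$ lies in $\mathcal B$ (whence $i_1=3$) or in the part of $\mathcal E$ with $J\vDash b+2$ (whence $i_1\ge 4$); in both cases $l_K+i_1\ge 5$, so \cref{lem:F} applies. Specialising that lemma to $l_K=2$ — so that $\chi(1\in L_K)=0$ and the term $\delta_{i_1}r_{i_2}$ survives only for $i_1\in\{2,3\}$ — gives
\[
F(K)=2r_{i_1}+\delta_{i_1}r_{i_2}+2r_{j_1}+\delta_{j_1}r_{j_2}-2r_{i_1}r_{j_1}-(5-i_1)r_{j_1}r_{i_2}\chi(i_1\le 4),
\]
and everything that follows is arithmetic on this right-hand side.

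I would first clear the two outer regimes. For $K\in\mathcal B$ one has $(i_1,j_1)=(3,2)$; inserting $r_3=\tfrac32$, $r_2=2$, $\delta_3=\tfrac32$, $\delta_2=4$ collapses the formula to $F(K)=1+4r_{j_2}-\tfrac52 r_{i_2}>1+4-5=0$ (using $r_{j_2}>1$ and $r_{i_2}\le 2$). For $K$ with $J\vDash b+2$ and $i_1\ge 5$ one has $\delta_{i_1}=0$ and $\chi(i_1\le 4)=0$, so $F(K)=2r_{i_1}(1-r_{j_1})+2r_{j_1}+\delta_{j_1}r_{j_2}$; bounding the positive $r_{i_1}$ by $r_5=\tfrac54$ against the negative $1-r_{j_1}$ gives $F(K)\ge\tfrac52-\tfrac12 r_{j_1}+\delta_{j_1}r_{j_2}\ge\tfrac32>0$. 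Hence $\mathcal S_2$ can only contain compositions with $l_K=2$, $J\vDash b+2$, and $i_1=4$.

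It remains to analyse $i_1=4$, where $\delta_{i_1}=0$, $r_4=\tfrac43$ and $5-i_1=1$, so \cref{lem:F} reduces to $F(K)=\tfrac83-\tfrac23 r_{j_1}+\delta_{j_1}r_{j_2}-r_{j_1}r_{i_2}$; moreover $f(K)=2r_4(1-r_{j_1})<0$ automatically here, so the requirement $f(K)<0$ imposes nothing beyond $i_1=4$. For $j_1=2$ this is $\tfrac43+4r_{j_2}-2r_{i_2}>\tfrac43>0$ and for $j_1=3$ it is $\tfrac53+\tfrac32(r_{j_2}-r_{i_2})>\tfrac16>0$, so those values of $j_1$ contribute nothing. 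For $j_1\ge 4$ we have $\delta_{j_1}=0$, and $F(K)<0$ becomes $r_{j_1}\,(2+3r_{i_2})>8$, i.e.\ $r_{j_1}>8/(2+3r_{i_2})$. Running through the admissible $i_2$: for $i_2=2$ the threshold is $1$, always met since $r_{j_1}>1$, which yields case~(1); for $i_2=3$ the threshold is $16/13$, and the crux of the whole proposition is the comparison $r_6=\tfrac65<\tfrac{16}{13}<\tfrac54=r_5$, so exactly $j_1\in\{4,5\}$ qualify, which yields case~(2); and for $i_2\ge 4$ the threshold is at least $8/6=\tfrac43\ge r_{j_1}$, so nothing qualifies. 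The converse inclusion is immediate (any $K$ of type (1) or (2) has $i_1=4$ and $J\vDash b+2$, hence sits in $\mathcal E$ with $f(K)<0$ and negative $F(K)$), and substituting $\delta_{j_1}=0$ into the $i_1=4$ formula records the value of $F(K)$ in the two surviving cases.

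I expect the only genuinely delicate step to be that numerical chain $r_6<16/13<r_5$, which pins the boundary $j_1\in\{4,5\}$; everything else is substitution into \cref{lem:F} together with the bounds $1<r_m\le 2$. One should also check that the parts $i_2$ and $j_2$ occurring in the formulas really exist ($i_2$ always, since $|I|=b+5>i_1$; $j_2$ whenever $j_1\le 3$ and $b$ is not too small) and dispatch the small values of $b$ first, for which no composition with $J\vDash b+2$ and $j_1\ge 4$ exists and hence $\mathcal S_2=\emptyset$.
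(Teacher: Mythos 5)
Your proof is correct and follows essentially the same route as the paper's: reduce to the $l_K=2$ specialization of \cref{lem:F} via \cref{prop:f<0.Y0}, dispatch $(i_1,j_1)=(3,2)$ and $i_1\ge 5$ by the bounds $1<r_m\le 2$, and then case on $j_1$ and $i_2$ for $i_1=4$, with the same decisive comparison at $j_1\in\{4,5\}$ when $i_2=3$. One remark: the expression you correctly derive is $F(K)=(8-3r_{i_2}r_{j_1}-2r_{j_1})/3$, so the sign $+3r_{i_2}r_{j_1}$ in the stated formula is a typo (that version would always be positive, contradicting $F(K)<0$, and the later use in \cref{lem:24>=4.i2=23} confirms the minus sign).
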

\begin{proof}
By \cref{prop:f<0.Y0}, we find either $(i_1, j_1)=(3, 2)$ or $i_1\ge 4$.
Then by \cref{lem:F}, 
\begin{equation}\label{pf:F.l=2}
F(K)=\sum_{h\in\{i,\, j\}}(2r_{h_1}+\delta_{h_1}r_{h_2})
-2r_{i_1}r_{j_1}-(5-i_1)r_{j_1}r_{i_2}\chi(i_1\le 4).
\end{equation}
If $(i_1, j_1)=(3, 2)$, then \cref{pf:F.l=2} gives
\begin{equation}\label{F.232}
F(K)=1+4r_{j_2}-\frac{5}{2}r_{i_2}>1+4-\frac{5}{2}\cdotp 2=0.
\end{equation}
It is routine to check that $F>0$ for when $i_1\ge 5$. Let $i_1=4$. Then
\begin{equation}\label{pf:F.34}
F(K)=\frac{8}{3}+\delta_{j_1}r_{j_2}-\frac{2}{3}r_{j_1}-r_{j_1}r_{i_2}.
\end{equation}
In this case, if $j_1=2$, then \cref{pf:F.34} gives
\[
F(K)=\frac{8}{3}+4r_{j_2}-\frac{2}{3}\cdotp 2-2r_{i_2}
>\frac{4}{3}+4-2\cdotp 2>0;
\]
if $j_1=3$, then \cref{pf:F.34} gives
\begin{equation}\label{F.243}
F(K)=\frac{5}{3}+\frac{3}{2}(r_{j_2}-r_{i_2})
>\frac{5}{3}+\frac{3}{2}(1-2)>0.
\end{equation}
Suppose that $j_1\ge 4$. Then $F(K)$ becomes the desired form. If $i_2=2$, then
\[
F(K)=\frac{8}{3}(1-r_{j_1})<0.
\]
If $i_2=3$, then 
\[
F(K)=\frac{8}{3}-\frac{13}{6}r_{j_1}, 
\]
which is negative if and only if $j_1\le 5$. If $i_2\ge 4$, then 
\[
F(K)\ge \frac{8}{3}-\brk2{\frac{2}{3}+\frac{4}{3}}\cdotp \frac{4}{3}=0.
\]
In summary, we obtain the desired statement.
\end{proof}

\begin{proposition}\label{prop:F.l=3}
Let $K=I\!J\in\mathcal K_0$ be defined by \cref{def:K=IJ:Y0}. Then $K\in\mathcal S_3$ if and only if one of the following happens:
\begin{enumerate}
\item
$(i_1, j_1)=(3, 2)$, $5i_2-j_2-3\le 0$, and $F(K)=(5r_{j_2}-r_{i_2}-4)/2$, 
\item
$(i_1, j_1)=(3, 4)$, $i_2\ge 3$, and $F(K)=(r_{i_2}-2)/6$, 
\item
$(i_1, j_1)=(2, 2)$, $j_2\ge 3$, and $F(K)=2(r_{j_2}-2)$.
\end{enumerate}
\end{proposition}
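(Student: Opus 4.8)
The plan is to follow the template of the proof of \cref{prop:F.l=2}. Fix $K=I\!J\in\mathcal K_0$ with $l_K=3$; then $\abs{I}=b+4\ge 5$, so $l_K+i_1\ge 5$ holds automatically and \cref{lem:F} is available. Since the condition $1\in L_K$ amounts to $j_1=2$ when $l_K=3$, \cref{lem:F} gives
\[
F(K)=2r_{i_1}+\delta_{i_1}r_{i_2}+2r_{j_1}+\delta_{j_1}r_{j_2}
-3r_{i_1}r_{j_1}-r_{i_1}r_{j_2}\chi(j_1=2)-(4-i_1)r_{j_1}r_{i_2}\chi(i_1\le 4).
\]
By \cref{prop:f<0.Y0}, a composition with $l_K=3$ has $f(K)<0$ exactly when it lies in $\mathcal A\sqcup\mathcal C\sqcup\mathcal D$ or in the part of $\mathcal E$ with $J\vDash b+3$, so in particular $i_1\in\{2,3\}$ or $i_1\ge 4$. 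The whole argument then reduces to evaluating $F(K)$ on these shapes and deciding its sign with the uniform estimate $1<r_m\le 2$ together with the sharper bounds $r_m\le 3/2,\,4/3,\,5/4$ valid for $m\ge 3,\,4,\,5$, respectively.

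I would organize the verification by the value of $i_1$. For $i_1\ge 5$ we have $\delta_{i_1}=0$ and the last term of $F(K)$ drops (so $r_{i_2}$ does not even appear), and for $i_1=4$ we have $\delta_4=0$ and $4-i_1=0$; in both situations a short computation in each of the subcases $j_1=2$, $j_1=3$, $j_1\ge 4$ shows $F(K)\ge 0$. Hence no composition with $i_1\ge 4$ lies in $\mathcal S_3$, which is why only $i_1\in\{2,3\}$ survives in the statement.

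Next, for $i_1=3$ — which forces $K\in\mathcal C$ — I would substitute $r_{i_1}=3/2$ and run through $j_1=2,\,3,\,4$ and $j_1\ge 5$. The subcases $j_1=3$ and $j_1\ge 5$ give $F(K)>0$; the subcase $j_1=2$ gives $F(K)=(5r_{j_2}-r_{i_2}-4)/2$, which, writing $r_m=1+1/(m-1)$, is negative precisely when $5(i_2-1)<j_2-1$, i.e.\ when $5i_2-j_2-3\le 0$ — this is case~(1); and the subcase $j_1=4$ gives $F(K)=(r_{i_2}-2)/6$, negative precisely when $i_2\ge 3$ — this is case~(2), which is moreover consistent with the membership condition $j_1\le 3i_2-3$ of $\mathcal C$. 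For $i_1=2$ the only relevant sets are $\mathcal A$, with $(i_1,j_1)=(2,3)$ and $i_2\ge 6$, where $F(K)=-2+r_{i_2}+(3/2)r_{j_2}>0$, and $\mathcal D$, with $(i_1,j_1)=(2,2)$, where $F(K)=2(r_{j_2}-2)$, negative precisely when $j_2\ge 3$ — this is case~(3). For the converse, each of (1)--(3) describes compositions lying in $\mathcal C$ or in $\mathcal D$ by \cref{prop:f<0.Y0}, hence with $f(K)<0$, and the displayed formula for $F(K)$ together with the accompanying inequality gives $F(K)<0$, so they all belong to $\mathcal S_3$.

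I expect the only real obstacle to be bookkeeping rather than any genuine difficulty: evaluating the $\chi$- and $\delta$-terms correctly in the small cases $i_1\in\{2,3,4\}$, translating each strict inequality $F(K)<0$ into the stated integer condition while carefully excluding the boundary cases where $F(K)=0$, and checking that the subcases on $(i_1,i_2,j_1,j_2)$ really do exhaust $\mathcal K^-$ restricted to $l_K=3$. This is the same type of finite case check already carried out in \cref{prop:f<0.Y0,prop:F.l=2}.
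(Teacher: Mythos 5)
Your proposal is correct and follows essentially the same route as the paper: apply \cref{lem:F} with $l_K=3$, case on $i_1$ (eliminating $i_1\ge 4$ via $F(K)\ge 0$), then on $j_1$, and translate each strict inequality $F(K)<0$ into the stated integer condition using $r_m=1+1/(m-1)$. The only cosmetic difference is that you organize the $i_1=2$ analysis by the sets $\mathcal A$ and $\mathcal D$ of \cref{prop:f<0.Y0} rather than directly by $j_1\le 3$, and your evaluation $-2+r_{i_2}+\tfrac{3}{2}r_{j_2}$ in the $(i_1,j_1)=(2,3)$ subcase is the accurate one (the paper's displayed simplification there is a harmless slip); both expressions are positive, so the conclusion is unaffected.
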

\begin{proof}
By \cref{lem:F}, 
\begin{equation}\label{F.l=3}
F(K)=\sum_{h\in\{i,\, j\}}(2r_{h_1}+\delta_{h_1}r_{h_2})
-3r_{i_1}r_{j_1}-r_{i_1}r_{j_2}\chi(j_1=2)
-(4-i_1)r_{j_1}r_{i_2}\chi(i_1\le 3).
\end{equation}
We proceed according to the value of $i_1$.
\begin{itemize}
\item
If $i_1\ge 4$, then \cref{F.l=3} reduces to
\[
F(K)=2r_{i_1}+2r_{j_1}+\delta_{j_1}r_{j_2}
-3r_{i_1}r_{j_1}-r_{i_1}r_{j_2}\chi(j_1=2).
\]
It is routine to check that $F\ge 0$ according to the value of $j_1$, if one notices $1<r_i\le 2$ for all $i$.
\item
If $i_1=3$, then \cref{F.l=3} reduces to
\begin{equation}\label{pf:F.l=i1=3}
F(K)=3+\frac{3}{2}r_{i_2}+\delta_{j_1}r_{j_2}
-\frac{5}{2}r_{j_1}-\frac{3}{2}r_{j_2}\chi(j_1=2)-r_{j_1}r_{i_2}.
\end{equation}
It is routine to check $F(K)>0$ for when $j_1\ge 5$ or $j_1=3$.
If $j_1=4$, then $f(K)<0$ implies $i_2\ge 3$ by \cref{prop:f<0.Y0}, and \cref{pf:F.l=i1=3} reduces to the desired negative expression of $F(K)$. If $j_1=2$, then $f(K)<0$ by \cref{prop:f<0.Y0}. In this case, \cref{pf:F.l=i1=3} reduces to the desired expression of $F(K)$. It is elementary to deduce that $F(K)<0$ if and only if $5i_2-j_2-3\le 0$.
\item
If $i_1=2$, then $f(K)<0$ implies $j_1\le 3$ by \cref{prop:f<0.Y0}, and \cref{F.l=3} reduces to
\begin{equation}\label{F.22}
F(K)=4+4r_{i_2}+\delta_{j_1}r_{j_2}
-4r_{j_1}-2r_{j_2}\chi(j_1=2)-2r_{j_1}r_{i_2}.
\end{equation}
In this case, if $j_1=3$, then \cref{F.22} gives
\[
F(K)=\frac{5}{2}r_{j_2}-2>\frac{5}{2}-2>0.
\]
If $j_1=2$, then \cref{F.22} reduces to $F(K)=2(r_{j_2}-2)$, which is negative if and only if $j_2\ge 3$.
\end{itemize}
In summary, we obtain the desired characterization for $F(K)<0$.
\end{proof}

Following Step 2 of our strategy, we deal with compositions $K\in\mathcal S_2$.

\begin{lemma}\label{lem:24>=4.i2=23}
Let $K=I\!J\in\mathcal K_0$ be defined by \cref{def:K=IJ:Y0} with $l_K=2$.
If $f(K)<0$, then $K=4ST$ for some compositions $S$ and $T$ such that $(\abs{S}, \abs{T})=(b+1, \,b+2)$, and $s_1\le 3<4\le t_1$. Moreover, for the composition $M=S4T$, 
\[
F(K)+f(M)
=\frac{8}{3}+2r_{s_1}+\delta_{s_1}r_{s_2}
-\frac{2}{3}r_{t_1}-3r_{s_1}r_{t_1}
>0.
\]
\end{lemma}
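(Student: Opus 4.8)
The plan is to establish \cref{lem:24>=4.i2=23} in three movements: first pin down the structure of $K$, then identify the repair composition $M$, and finally verify the displayed inequality by reading off the characterization in \cref{prop:F.l=2}.

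\textbf{Structure of $K$.} Since $l_K=2$ and $f(K)<0$, \cref{prop:f<0.Y0} forces $K\in\mathcal B\cup\mathcal E$ among the five families, i.e.\ either $(i_1,j_1)=(3,2)$ with $J\vDash b+2$, or $i_1\ge 4$ with $J\vDash b+2$. But the refinement in \cref{prop:F.l=2} — which we may invoke because $K\in\mathcal S_2$ once we also know $F(K)<0$; in the present lemma $F(K)<0$ will emerge from the argument, so I would instead argue directly — shows that a negative $f(K)$ with $l_K=2$ only survives to the analysis where $i_1=4$. More carefully: the plan is to first observe $f(K)<0$ already rules out $i_1\le 2$ (the computations $f(K)=4+4r_{i_2}-4r_{j_1}>0$ from the proof of \cref{prop:f<0.Y0}), and for $i_1=3$ it forces $j_1=2$, which is the case $(i_1,j_1)=(3,2)$; and then \cref{prop:F.l=2} via \cref{F.232} shows $F(K)>0$ in that subcase, so that subcase contributes nothing needing repair. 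Hence for the compositions actually handled here, $i_1=4$, and writing $K=4ST$ with $\abs{4S}=b+7-l_K-? $ — one sets $I=4S$ so $\abs{S}=\abs{I}-4=(b+5)-4=b+1$ and $T=J$ with $\abs{T}=b+2$. Because $K\in\mathcal W$ has no part $1$, $s_1\ge 2$, and because $\abs{S}=b+1$ means $S$ is \emph{not} a valid suffix of size $b+2$ or $b+3$ (the factorization \cref{def:K=IJ:Y0} was unique), together with $i_1=4$, one gets $s_1\le 3$; meanwhile $t_1=j_1\ge 4$ follows from \cref{prop:F.l=2}(1)--(2) (the surviving cases have $j_1\ge 4$).

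\textbf{The repair composition and the inequality.} Define $M=S4T$, obtained by moving the leading part $4$ past the block $S$. Then $M\in\mathcal K_0$ with $l_M=l_K=2$ (its suffix of size $b+2$ is still $T$), and its leading part is $m_1=s_1\le 3$, so \cref{def:f.l=23} applies to compute $f(M)$. The plan is to compute $f(M)=2r_{s_1}+\delta_{s_1}r_{s_2}-2r_{s_1}r_{t_1}$ (the $\chi(1\in L_K)$ term vanishes since $l_M=2$), and to recall from \cref{pf:F.l=2}/\cref{prop:F.l=2} that in the surviving cases $F(K)=(8+3r_{i_2}r_{j_1}-2r_{j_1})/3$ with $(i_1,i_2)=(4,i_2)$; but the key simplification is that after fixing $i_1=4$ one may re-expand $F(K)$ via \cref{pf:F.34} as $F(K)=\tfrac83+\delta_{j_1}r_{j_2}-\tfrac23 r_{j_1}-r_{j_1}r_{i_2}$. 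Adding $f(M)$ and using $i_2=s_2$ (since the part in position $2$ of $K=4S\cdots$ is $s_1$, wait — here one must be careful: position $2$ of $K$ is $s_1$, not $s_2$; so in $F(K)$ the symbol $i_2$ equals $s_1$), we get
\[
F(K)+f(M)=\frac{8}{3}+2r_{s_1}+\delta_{s_1}r_{s_2}-\frac{2}{3}r_{t_1}-3r_{s_1}r_{t_1},
\]
which is exactly the claimed expression (noting $\delta_{j_1}r_{j_2}$ cancels appropriately once $j_1=t_1$ and matching the $-2r_{s_1}r_{t_1}$ from $f(M)$ against the $-r_{j_1}r_{i_2}=-r_{t_1}r_{s_1}$ from $F(K)$). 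Finally, positivity: using $1<r_x\le 2$, one bounds $F(K)+f(M)\ge \tfrac83+2r_{s_1}-\tfrac23\cdot 2-3r_{s_1}\cdot r_{t_1}$ and then, since $t_1\ge 4$ gives $r_{t_1}\le \tfrac43$, one gets $F(K)+f(M)\ge \tfrac43+2r_{s_1}-4r_{s_1}=\tfrac43-2r_{s_1}$ — which is \emph{not} obviously positive, so the real bound must go the other way: keep the $\delta_{s_1}r_{s_2}$ term when $s_1\in\{2,3\}$. The honest estimate is: if $s_1=2$, $\delta_{s_1}=4$ and the expression is $\tfrac83+4+4r_{s_2}-\tfrac23 r_{t_1}-6r_{t_1}>\tfrac83+4+4-\tfrac{20}{3}r_{t_1}$, and with $r_{t_1}\le\tfrac43$ this exceeds $\tfrac83+8-\tfrac{80}{9}>0$; if $s_1=3$, $\delta_{s_1}=\tfrac32$ and one argues similarly; if $s_1\ge 4$ then (as noted) $s_1\le 3$ fails so this case is vacuous.

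\textbf{Main obstacle.} The delicate point is the bookkeeping of which symbol ($s_1$ vs.\ $s_2$, $i_2$ vs.\ the moved part) sits in which slot after the cyclic-type shift $K=4ST\mapsto M=S4T$, and making sure the uniqueness of the factorization \cref{def:K=IJ:Y0} genuinely forces $s_1\le 3$ and $t_1\ge 4$ rather than being an extra hypothesis. I expect the sign verification itself to be routine case analysis on $s_1\in\{2,3\}$ and $t_1\ge 4$ via $1<r_x\le 2$; the care is all in the indexing and in confirming, against \cref{prop:F.l=2}, that the $(i_1,j_1)=(3,2)$ subcase has already been disposed of (so that every $K$ reaching this lemma really does have $i_1=4$, hence the factorization $K=4ST$ with $(\abs{S},\abs{T})=(b+1,b+2)$).
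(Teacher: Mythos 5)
Your proposal follows the same route as the paper: restrict to $i_1=4$ via \cref{prop:F.l=2} (you correctly diagnose that the hypothesis is effectively $K\in\mathcal S_2$, since $f(K)<0$ alone also admits $(i_1,j_1)=(3,2)$ and $i_1\ge 5$, which need no repair), compute $F(K)$ from \cref{pf:F.34} with $i_2=s_1$ and $\delta_{j_1}=0$, compute $f(M)$ from \cref{def:f.l=23}, and close with $t_1\ge 4\Rightarrow r_{t_1}\le 4/3$ together with the $\delta_{s_1}r_{s_2}$ term. The algebra and the final case analysis on $s_1\in\{2,3\}$ agree with the paper's estimate $\tfrac{16}{9}+\delta_{s_1}-2r_{s_1}>0$.

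One step of your argument is wrong as stated, and it is load-bearing. You derive $s_1\le 3$ from ``$\abs{S}=b+1$ means $S$ is not a valid suffix of size $b+2$ or $b+3$ (the factorization \cref{def:K=IJ:Y0} was unique)''. Uniqueness of the factorization has nothing to do with the size of the leading part of $S$; it only rules out extra admissible suffixes. The correct source is \cref{prop:F.l=2} itself: membership in $\mathcal S_2$ forces $(i_1,i_2)\in\{(4,2),(4,3)\}$, and since $i_2=s_1$ this gives $s_1\in\{2,3\}$ (exactly parallel to how you obtain $t_1=j_1\ge 4$). This matters because for $s_1\ge 4$ one has $\delta_{s_1}=0$ and your bound degenerates to $\tfrac{16}{9}-2r_{s_1}<0$, so the positivity genuinely depends on $s_1\le 3$ being established from the right place. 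With that citation repaired, your proof coincides with the paper's.
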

\begin{proof}
By \cref{prop:F.l=2}, the composition $K$ has the desired form $K=4ST$. Together with \cref{def:f.l=23}, we can calculate
\begin{align*}
F(K)+f(M)
&=\brk3{\frac{8}{3}-\frac{2}{3}r_{t_1}-r_{s_1}r_{t_1}}
+(2r_{s_1}+\delta_{s_1}r_{s_2}-2r_{s_1}r_{t_1})\\
&>\frac{8}{3}-\frac{2}{3}\cdotp\frac{4}{3}
+2r_{s_1}+\delta_{s_1}-3r_{s_1}\cdotp\frac{4}{3}
=\frac{16}{9}+\delta_{s_1}-2r_{s_1}\\
&\ge\frac{16}{9}+\min\brk3{4-2\cdotp 2, \
\frac{3}{2}-2\cdotp \frac{3}{2}}>0.
\end{align*}
This completes the proof.
\end{proof}

\cref{lem:332,lem:334,lem:322} deal with the three kinds of compositions $K\in\mathcal S_3$, respectively.

\begin{lemma}\label{lem:332}
Let $\mathcal M=\{K=3P2Q\in\mathcal W_{2b+7}\colon\abs{P}=\abs{Q}=b+1\}$. Then for any $K\in\mathcal M$, 
\[
F(K)+F\brk1{\Phi(K)}>0,
\]
where $\Phi$ is the involution on $\mathcal M$ defined by $\Phi(3P2Q)=3Q2P$.
\end{lemma}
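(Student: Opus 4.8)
The plan is to reduce everything to \cref{lem:F} and the uniform bounds $1<r_i\le 2$. For $K=3P2Q\in\mathcal M$, we have $l_K=2$ and $i_1=3$, so $l_K+i_1=5$ and $i_1\le 4$, which means \cref{lem:F} applies with the factor $(7-l_K-i_1)=2$. Writing $I=3P$ and $J=2Q$ (so $i_2=p_1$ and $j_2=q_1$ provided $P,Q$ are nonempty; the degenerate cases where $P$ or $Q$ has length one must be tracked separately, but there $i_2$ or $j_2$ is simply read off the composition $3$ or $2$), \cref{lem:F} gives
\[
F(3P2Q)=\brk3{2r_3+\tfrac32 r_{p_1}}+\brk3{2r_2+4r_{q_1}}
-2r_3r_2-2\cdot r_2 r_{p_1},
\]
where I have used $\delta_3=\tfrac32$, $\delta_2=4$, $j_1=2$ so the term $-r_{i_1}r_{j_2}\chi(1\in L_K)$ vanishes since $l_K=2$ means $1\notin L_K$. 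Similarly $F(\Phi(K))=F(3Q2P)$ is obtained by swapping $p_1\leftrightarrow q_1$ throughout: $F(3Q2P)=(2r_3+\tfrac32 r_{q_1})+(2r_2+4r_{p_1})-2r_3r_2-2r_2r_{q_1}$.

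Adding the two expressions, the constant terms double and the cross terms reorganize: I would collect the coefficient of $r_{p_1}$, which is $\tfrac32+4-2r_2=\tfrac32$ (using $r_2=2$), and likewise the coefficient of $r_{q_1}$ is $\tfrac32$, so
\[
F(K)+F\brk1{\Phi(K)}
=2\brk3{4r_3+4r_2-2r_3r_2}+\tfrac32\brk1{r_{p_1}+r_{q_1}}.
\]
Since $r_2=2$ and $r_3=\tfrac32$, the bracketed constant is $4\cdot\tfrac32+8-2\cdot\tfrac32\cdot2=6+8-6=8$, giving $F(K)+F(\Phi(K))=16+\tfrac32(r_{p_1}+r_{q_1})>0$ because each $r_i>1$. (One must double-check the numerics I just sketched against the sign conventions in \cref{lem:F}; the qualitative point is that the negative cross term $-2r_2r_{p_1}$ in $F(3P2Q)$ is overwhelmed by the positive $+4r_{p_1}$ coming from $\delta_{p_1}$ in $F(3Q2P)$ after the swap, and symmetrically. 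This cancellation of the dangerous $r_{p_1}$-linear term — its coefficient dropping from potentially negative to the harmless $+\tfrac32$ — is precisely why averaging $K$ with $\Phi(K)$ works, and is the one place where the specific values $r_2=2$, $\delta_2=4$ matter.)

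I would also handle the boundary cases carefully: if $P$ is empty then $K=32Q$ has $i_1=3$, $i_2=2$ and the $r_{p_1}$ slot is $r_2$; if $Q$ is empty then $K=3P2$; these still satisfy $l_K+i_1=5$, so \cref{lem:F} applies verbatim and the same arithmetic goes through with $p_1$ or $q_1$ specialized to $2$. Finally, $\Phi$ is manifestly an involution on $\mathcal M$ since it just swaps the two blocks of size $b+1$ flanking the parts $3$ and $2$, and it preserves membership in $\mathcal M$; it is fixed-point-free unless $P=Q$, in which case the bound above reads $F(K)+F(K)=2F(K)>0$, i.e.\ $F(K)>0$ directly. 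The main obstacle I anticipate is not conceptual but bookkeeping: making sure the arguments $h_2$ in $\delta_{h_1}r_{h_2}$ and in the $\chi(i_1\le 4)$-term of \cref{lem:F} are read off the correct composition after the $\varphi$-flip built into $F$, especially when $P$ or $Q$ is short, so that no phantom negative term is missed.
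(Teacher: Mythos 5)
There is a genuine gap, and it is located at the very first step: you misidentify $l_K$. For $K=3P2Q$ with $\abs{P}=\abs{Q}=b+1$, the suffix $Q$ has size $b+1$ and the suffix $2Q$ has size $b+3$, so $L_K=\{1,3\}$; a suffix of size $b+2$ would require a part equal to $1$, which is excluded in $\mathcal W_{2b+7}$. Hence the factorization \cref{def:K=IJ:Y0} gives $l_K=3$ with $I=3P$ and $J=2Q$, not $l_K=2$ as you assert. This error propagates into every term of your application of \cref{lem:F}: the coefficient of $r_{i_1}r_{j_1}$ should be $l_K=3$, not $2$; the factor $7-l_K-i_1$ should be $1$, not $2$; and since $l_K=3$ and $j_1=2$ we have $1\in L_K$, so the term $-r_{i_1}r_{j_2}=-r_3r_{q_1}$ that you discard is in fact present. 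The correct value is
\[
F(3P2Q)=\tfrac{1}{2}\bigl(5r_{q_1}-r_{p_1}-4\bigr),
\]
which is precisely \cref{prop:F.l=3}(1) with $i_2=p_1$ and $j_2=q_1$, and then $F(K)+F(\Phi(K))=2r_{p_1}+2r_{q_1}-4>0$. Your expression $1-\tfrac52 r_{p_1}+4r_{q_1}$ does not agree with this, and your subsequent arithmetic is also internally inconsistent: summing your own two expressions gives $2+\tfrac32(r_{p_1}+r_{q_1})$, not $16+\tfrac32(r_{p_1}+r_{q_1})$.

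The error matters beyond bookkeeping. With the correct formula, $F(K)$ can genuinely be negative (take $p_1=2$ and $q_1$ large, so $F(K)\to -\tfrac12$), which is the whole reason this lemma pairs $K$ with $\Phi(K)$; and the resulting bound $2r_{p_1}+2r_{q_1}-4$ is tight as $p_1,q_1\to\infty$. The large artificial slack in your version hides how delicate the cancellation is. Two minor points: $P$ and $Q$ are never empty here since $\abs{P}=\abs{Q}=b+1\ge 3$, so the degenerate cases you flag do not arise; and the fixed-point discussion for $\Phi$ is harmless but unnecessary, since the claimed inequality is stated for every $K$ individually.
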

\begin{proof}
Let $K=3P2Q\in\mathcal M$. Then $\Phi(K)=3Q2P$. By \cref{prop:F.l=3} and by symmetry, we obtain
\[
F(K)+F\brk1{\Phi(K)}
=\frac{5r_{q_1}-r_{p_1}-4}{2}+\frac{5r_{p_1}-r_{q_1}-4}{2}
=2r_{p_1}+2r_{q_1}-4>2+2-4=0.
\]
This completes the proof.
\end{proof}

\begin{lemma}\label{lem:334}
Let $K=I\!J\in\mathcal K_0$ be defined by \cref{def:K=IJ:Y0} with $(l_K, i_1, j_1)=(3, 3, 4)$ and $i_2\ge 3$. Then $K=3P4Q$ for some compositions $P$ and $Q$ such that $(\abs{P}, \abs{Q})=(b+1, \,b-1)$, and 
\[
F(K)+F(K')>0,\quad\text{where $K'=4P3Q$}.
\] 
\end{lemma}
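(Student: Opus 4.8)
The statement is a ``repair'' step of the type already carried out in \cref{lem:332}: we are given a composition $K=I\!J\in\mathcal S_3$ of the second kind in \cref{prop:F.l=3}, with $(l_K,i_1,j_1)=(3,3,4)$ and $i_2\ge 3$, for which $F(K)=(r_{i_2}-2)/6<0$, and we must pair it with another composition $K'\in\mathcal K^-$ so that the sum $F(K)+F(K')$ becomes positive. The first order of business is to read off the shape of $K$. Since $l_K=3$ we have $J\vDash b+3$ and $\abs{I}=b+4$; since $j_1=4$ and $\abs{J}=b+3\ge b+1$ we may write $J=4Q$ with $\abs{Q}=b-1$; since $i_1=3$ and $\abs{I}=b+4$ we may write $I=3P$ with $\abs{P}=b+1$. (Here $P$ is nonempty because $\abs{P}=b+1\ge 2$, and $p_1=i_2\ge 3$ by hypothesis; $Q$ is nonempty as $b\ge 1$ forces $\abs{Q}\ge 1$, but in fact we need $\abs{Q}\ge 2$, i.e. $b\ge 3$, for $Q\in\mathcal W$ — the small cases $b\in\{1,2\}$ should be checked separately or are vacuous because then $J\vDash b+3\le 5$ cannot have a first part $4$ with a legal remainder.) This gives the claimed factorization $K=3P4Q$.

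\textbf{Choice of partner and the computation.} The natural candidate, exactly as hinted in the statement, is $K'=4P3Q$: it has the same multiset of parts as $K$ up to moving the blocks around (it is obtained from $K$ by swapping the roles of the parts $3$ and $4$ that straddle the two ``pieces''). I would first verify $K'\in\mathcal K^-$ — concretely, $K'$ factorizes as $I'J'$ with $J'=3Q\vDash b+2$ (so $l_{K'}=2$) and $I'=4P$ with $i'_1=4\ge 4$; by \cref{prop:f<0.Y0} this lands $K'$ in the set $\mathcal E$, so indeed $f(K')<0$ and $K'\in\mathcal K^-$, and moreover $l_{K'}+i'_1=6\ge 5$ so \cref{lem:F} applies to $K'$. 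Now compute $F(K')$ from \cref{lem:F} (or directly from \cref{prop:F.l=2}, since $K'$ has $(l,i_1)=(2,4)$): with $i'_1=4$, $i'_2=p_1$, $j'_1=3$, $j'_2=q_1$, formula \cref{F.243} in the proof of \cref{prop:F.l=2} gives $F(K')=\tfrac53+\tfrac32(r_{q_1}-r_{p_1})$. Add this to $F(K)=(r_{p_1}-2)/6$ (using $i_2=p_1$):
\[
F(K)+F(K')=\frac{r_{p_1}-2}{6}+\frac{5}{3}+\frac{3}{2}\brk1{r_{q_1}-r_{p_1}}
=\frac{3}{2}+\frac{r_{p_1}}{6}-\frac{3r_{p_1}}{2}+\frac{3r_{q_1}}{2}
=\frac{3}{2}-\frac{4}{3}r_{p_1}+\frac{3}{2}r_{q_1}.
\]
Since $p_1=i_2\ge 3$ we have $r_{p_1}\le r_3=\tfrac32$, and always $r_{q_1}>1$, so this is at least $\tfrac32-\tfrac43\cdot\tfrac32+\tfrac32\cdot 1=\tfrac32-2+\tfrac32=1>0$. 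That establishes $F(K)+F(K')>0$.

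\textbf{Main obstacle.} None of the arithmetic is delicate; the real care is bookkeeping. Two points need attention. First, one must confirm that $K'=4P3Q$ really is a legal element of $\mathcal W_{2b+7}$ — i.e. that $P$ and $Q$ each have all parts $\ge 2$, which is inherited from $K\in\mathcal W$, and that neither the placement of the inserted parts $4$ and $3$ nor the concatenation creates a forbidden part; this is automatic. Second, and more importantly for the global argument, the map $K\mapsto K'$ here is the involution $\xi$ of Step 2 restricted to this third family of $\mathcal S_3$, and one must make sure (in \cref{lem:f(varphi)>=0}'s spirit, and in the eventual assembly) that this pairing is genuinely an involution and is disjoint from the pairings used for $\mathcal S_2$ (\cref{lem:24>=4.i2=23}), for $\mathcal M$ (\cref{lem:332}), and for the remaining family (\cref{lem:322}). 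The shape $3P4Q\leftrightarrow 4P3Q$ is clearly an involution on its own domain, and the domains are distinguished by $(l_K,i_1,j_1)$, so the disjointness is routine — but it is the one thing a careful reader will want checked, so I would state it explicitly here or defer it cleanly to the final assembly lemma for $Y_0$. In short: the hard part is not this lemma but ensuring it meshes with the other repair lemmas, and the proof itself is a two-line substitution once the factorization $K=3P4Q$ and the partner $K'=4P3Q$ are identified.
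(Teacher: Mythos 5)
Your proof is correct and follows essentially the same route as the paper: identify $K'=4P3Q$, note $f(K')<0$ via \cref{prop:f<0.Y0}, read off $F(K')$ from the $(l,i_1,j_1)=(2,4,3)$ case of \cref{prop:F.l=2} (i.e.\ \cref{F.243}), and add it to $F(K)=(r_{p_1}-2)/6$. The only blemish is an arithmetic slip in the simplification—the constant term is $\tfrac{4}{3}$ (from $-\tfrac{1}{3}+\tfrac{5}{3}$), not $\tfrac{3}{2}$—but since you also use the sharper bound $r_{p_1}\le\tfrac{3}{2}$, the corrected expression $\tfrac{4}{3}-\tfrac{4}{3}r_{p_1}+\tfrac{3}{2}r_{q_1}\ge\tfrac{5}{6}>0$ and the conclusion stands.
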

\begin{proof}
Since $(l_{K'}, i_1, j_1)=(2, 4, 3)$, we find $f(K')<0$ by \cref{prop:f<0.Y0}. By \cref{prop:F.l=3,F.243}, 
\[
F(K)+F(K')
=\frac{r_{p_1}-2}{6}+\brk3{\frac{5}{3}+\frac{3}{2}(r_{q_1}-r_{p_1})}
=\frac{4}{3}-\frac{4}{3}r_{p_1}+\frac{3}{2}r_{q_1}
>\frac{4}{3}-\frac{4}{3}\cdotp 2+\frac{3}{2}>0.
\]
This completes the proof.
\end{proof}

Recall that $\varphi$ is injective. When $\varphi^{-1}(H)=\{K\}$ is a singleton, we write $\varphi^{-1}(H)=K$.

\begin{lemma}\label{lem:322}
Let $K=I\!J\in\mathcal K_0$ be defined by \cref{def:K=IJ:Y0} with $(l_K, i_1, j_1)=(3, 2, 2)$ and $j_2\ge 3$. Then $K=2P2j_2Q\in\mathcal W_{2b+7}$ with $(\abs{P}, \abs{j_2Q})=(b+2, \,b+1)$. Let $R=22Qj_2P$. Then $F(K)+f(R)>0$. Moreover, we have the following:
\begin{itemize}
\item
if $j_2=3$, then the composition $K'=\varphi^{-1}(R)$ exists, and 
\[
F(K)+F(K')>0;
\]
\item
if $j_2=4$, then $\varphi^{-1}(R)=\emptyset$, and for the composition $K''=422Q\!P$,
\[
F(K)+f(R)+F(K'')\ge 0.
\]
\end{itemize}
\end{lemma}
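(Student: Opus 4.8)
The plan is to read off the shape of $K$ from the factorization of \cref{def:K=IJ:Y0}, and then dispatch the inequality $F(K)+f(R)>0$ together with the two subcases $j_2=3$ and $j_2=4$ in turn. The only delicate point in each case is to identify the $\varphi$-decomposition --- equivalently the value $l_H$ --- of whichever composition $H$ is in play; this is routine, since every part of a composition in $\mathcal W$ is $\ge2$, so consecutive suffix sizes, and consecutive prefix sizes, jump by at least $2$, and in particular a composition cannot have both a suffix (resp.\ prefix) of size $b+1$ and one of size $b+2$, nor of sizes $b+2$ and $b+3$. Once $l_H$ is known, $f(H)$ is evaluated from \cref{def:f.l=23} when the leading part of $H$ is at most $3$ and from \cref{def:f} directly when it is $4$ or more, and every $f$-value is bounded using $1<r_i\le2$.

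First I would note $|I|=b+7-l_K=b+4$ and $|J|=b+l_K=b+3$, so $i_1=j_1=2$ forces $I=2P$ and $J=2j_2Q$ with $|P|=b+2$, $|j_2Q|=b+1$, and every part of $P$, of $Q$, and $j_2$ itself is $\ge2$; hence $R=22Qj_2P\in\mathcal W_{2b+7}$. Since $P$ is a suffix of $R$ of size $b+2$ while the next suffix $j_2P$ has size $\ge b+5$, we get $L_R=\{2\}$, so $R\in\mathcal K_0$ with $l_R=2$, and \cref{def:f.l=23} (leading part $2$) gives $f(R)=12-4r_{p_1}\ge4$ since $r_{p_1}\le2$. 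By case (3) of \cref{prop:F.l=3}, $F(K)=2(r_{j_2}-2)>-2$; hence $F(K)+f(R)>0$, which already settles the first assertion and uses only $j_2\ge3$.

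For $j_2=3$, the prefix $22Q$ of $R=22Q3P$ has size $b+2$ and the following prefix $22Q3$ has size $b+5$, so $R$ has no prefix of size $b+1$ or $b+3$; hence $\varphi^{-1}(R)$ is the single composition $K'=3P22Q$, whose length-$(b+2)$ suffix $22Q$ shows $l_{K'}=2$. Then $F(K')=f(K')+f(\varphi(K'))=f(K')+f(R)$, while \cref{def:f.l=23} applied to $K'=(3P)(22Q)$ gives $f(K')=\tfrac32r_{p_1}-3$, so $F(K')=9-\tfrac52r_{p_1}\ge4$; combined with $F(K)=2(r_3-2)=-1$ this yields $F(K)+F(K')>0$. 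For $j_2=4$, the prefix $22Q$ of $R=22Q4P$ has size $b+1$ and the next prefix $22Q4$ has size $b+5$, so $R$ has no prefix of size $b+2$ or $b+3$ and thus $\varphi^{-1}(R)=\emptyset$. The composition $K''=422QP$ lies in $\mathcal W_{2b+7}\cap\mathcal K_0$ (its length-$(b+2)$ suffix $P$ forces $l_{K''}=2$), and since its leading part is $4$, \cref{def:f} gives $f(K'')=\tfrac83(1-r_{p_1})\ge-\tfrac83$. The extra observation that makes this subcase work is that $\varphi(K'')=P422Q$ has $22Q$ as a suffix of size $b+1$ and no suffix of size $b+2$ or $b+3$, so $\varphi(K'')\notin\mathcal K_0$ and hence $f(\varphi(K''))\ge0$ by the opening estimate of \cref{sec:Y0}; therefore $F(K'')\ge\tfrac83(1-r_{p_1})$. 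Combining $F(K)=2(r_4-2)=-\tfrac43$, $f(R)=12-4r_{p_1}$, and this bound for $F(K'')$ gives $F(K)+f(R)+F(K'')\ge\tfrac{40}{3}-\tfrac{20}{3}r_{p_1}\ge0$, with equality possible only when $p_1=2$.

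I expect the main obstacle to be purely organizational: correctly reading off $l_R$, $l_{K'}$, $l_{K''}$, and the $l$-value of $\varphi(K'')$ from the step-$\ge2$ property, and then substituting into \cref{def:f.l=23} or \cref{def:f}. The one genuinely non-mechanical step is recognizing that $\varphi(K'')$ escapes $\mathcal K_0$, which makes $f(\varphi(K''))\ge0$ automatic and is precisely what allows the tight inequality in the $j_2=4$ case to close.
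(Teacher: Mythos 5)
Your proof is correct, and all the numerical details check out: the identifications $l_R=l_{K'}=l_{K''}=2$ follow from the step-$\ge 2$ property of suffix and prefix sizes exactly as you argue, and the values $f(R)=12-4r_{p_1}$, $f(K')=\tfrac32 r_{p_1}-3$, $F(K)=2(r_{j_2}-2)$ agree with \cref{def:f.l=23} and \cref{prop:F.l=3}. For the first assertion and the case $j_2=3$ you follow essentially the paper's route (the paper computes $F(K)+f(R)=8+2r_{j_2}-4r_{p_1}$ in one line and reads $F(K')=9-\tfrac52 r_{p_1}$ directly off \cref{F.232}, while you reassemble $F(K')=f(K')+f(\varphi(K'))$; same numbers either way). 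In the case $j_2=4$ you genuinely diverge: the paper feeds $K''=422Q\!P$ into \cref{lem:24>=4.i2=23} to get $F(K'')+f(R)=\tfrac{44}{3}-\tfrac{20}{3}r_{p_1}$, and for that it explicitly restricts to $p_1\ge 4$ (the only case invoked later, since $\mathcal D_3$ requires $i_2=p_1\ge 4$), whereas you bound $F(K'')\ge f(K'')=\tfrac83(1-r_{p_1})$ by observing that $\varphi(K'')=P422Q$ has no suffix of size $b+2$ or $b+3$, hence lies outside $\mathcal K_0$ and satisfies $f(\varphi(K''))\ge 0$ by the opening estimate of \cref{sec:Y0}. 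This yields the same final expression $\tfrac{20}{3}(2-r_{p_1})\ge 0$ but is valid for every $p_1\ge 2$, so it proves the lemma as literally stated, without the implicit restriction $p_1\ge 4$ in the paper's argument; the trade-off is that you do not reuse \cref{lem:24>=4.i2=23}, recomputing the relevant quantities from \cref{def:f} instead.
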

\begin{proof}
Note that $\varphi^{-1}(R)=\emptyset$ if and only if $j_2\ge 4$. By \cref{prop:F.l=3,def:f}, 
\[
F(K)+f(R)=(2r_{j_2}-4)+(12-4r_{p_1})=8+2r_{j_2}-4r_{p_1}
>8+2-4\cdotp 2>0.
\]
When $j_2=3$, we have $K'=\varphi^{-1}(R)=3P22Q$ and $l_{K'}=2$. By \cref{prop:f<0.Y0}, we find $f(K')<0$. By \cref{prop:F.l=3,F.232}, 
\[
F(K)+F(K')=2(r_3-2)+\brk3{9-\frac{5}{2}r_{p_1}}=8-\frac{5}{2}r_{p_1}
\ge 8-\frac{5}{2}\cdotp 2>0.
\]
When $j_2=4$, we have $R=22Q4P$ with $(\abs{P}, \abs{Q})=(b+2, \,b-3)$. It is possible that $R$ coincides with the composition $M$ defined in \cref{lem:24>=4.i2=23}, i.e., $R=M$. By definition, this happens if and only if $p_1\ge 4$. Suppose that $p_1\ge 4$. Then by \cref{prop:F.l=3,lem:24>=4.i2=23}, 
\[
F(K)+\brk1{F(K'')+f(R)}=2(r_4-2)+\brk3{\frac{44}{3}-\frac{20}{3}r_{p_1}}
=\frac{20}{3}(2-r_{p_1})\ge 0.
\]
This completes the proof.
\end{proof}

Now we are in a position to give a proof for the $e$-positivity of $Y_0$.
\begin{lemma}\label{lem:e+:trinacria1.Y0}
The symmetric function $Y_0$ is $e$-positive.
\end{lemma}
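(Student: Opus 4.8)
The plan is to assemble the pieces from \cref{prop:f<0.Y0} through \cref{lem:322} into the three-step progressive repair scheme announced at the start of \cref{sec:Y0}. Recall that, because $\varphi$ is injective and $\varphi(K)$ has the same underlying partition as $K$, it suffices to exhibit, for every $K\in\mathcal K^-$, a bookkeeping that pairs $f(K)$ with $f(\varphi(K))$ (giving $F(K)$) and possibly with a few further nonnegative or already-accounted terms, so that the total is nonnegative and no term of the $e_I$-expansion of $Y_0$ is used more than once. First I would record the easy cases: by \cref{prop:f<0.Y0} the set $\mathcal K^-$ splits as $\mathcal A\sqcup\mathcal B\sqcup\mathcal C\sqcup\mathcal D\sqcup\mathcal E$, and by \cref{lem:f(varphi)>=0} we have $\varphi(\mathcal K^-)\subseteq\mathcal K^+$, so each $F(K)=f(K)+f(\varphi(K))$ is a legitimate regrouping. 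The compositions $K$ with $F(K)\ge0$ are handled immediately (Step 1, trivial branch). The remaining ones form $\mathcal S=\mathcal S_2\sqcup\mathcal S_3$, characterized explicitly in \cref{prop:F.l=2,prop:F.l=3}.

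Next I would dispatch $\mathcal S$ case by case, quoting the lemmas already proved. For $K\in\mathcal S_2$ (so $l_K=2$, $K=4ST$ as in \cref{prop:F.l=2}), \cref{lem:24>=4.i2=23} produces the composition $M=S4T\in\mathcal H$ with $F(K)+f(M)>0$; this is a Step-2 repair using one fresh element of $\mathcal H$. For $K\in\mathcal S_3$ there are three subcases. If $(i_1,j_1)=(3,2)$, i.e.\ $K\in\mathcal M$, \cref{lem:332} pairs $K$ with the involution partner $\Phi(K)=3Q2P$ and gives $F(K)+F(\Phi(K))>0$; this is a Step-2 repair with $\xi(K)=\Phi(K)\ne K$ and no element of $\mathcal H$ needed. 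If $(i_1,j_1)=(3,4)$ with $i_2\ge3$, \cref{lem:334} pairs $K=3P4Q$ with $K'=4P3Q\in\mathcal K^-$ and gives $F(K)+F(K')>0$, again a Step-2 repair. If $(i_1,j_1)=(2,2)$ with $j_2\ge3$, \cref{lem:322} handles $j_2=3$ by pairing $K$ with $K'=\varphi^{-1}(R)=3P22Q\in\mathcal K^-$ (so $F(K)+F(K')>0$), and handles $j_2=4$ by using both $R=22Q4P$ and the extra element $K''=422Q\!P$, with $F(K)+f(R)+F(K'')\ge0$; note here $R\notin\varphi(\mathcal K^-)$ so $f(R)$ is an unused term, and the possible coincidence $R=M$ is exactly the case $p_1\ge4$ treated in the lemma. (One should observe that $j_2\in\{3,4\}$ is exhausted because $K\in\mathcal W_{2b+7}$ with the stated prefix forces a bounded second part of $J$; this follows from the size constraints already in play.)

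The remaining obligation is Step 3: \emph{distinctness verification}. I would define the involution $\xi$ on $\mathcal K^-$ to be the identity on $\mathcal S_2$ and on the $(3,2)$-and-$(3,4)$ parts of $\mathcal S$ where an involution partner already lives in $\mathcal K^-$, set $\xi(K)=\Phi(K)$ on $\mathcal M$, set $\xi(3P4Q)=4P3Q$ and vice versa, and $\xi(K)=\varphi^{-1}(R)$ and back on the $j_2=3$ subcase of the $(2,2)$ family — checking in each instance that $\xi$ is genuinely an involution, i.e.\ that applying the construction twice returns $K$, which is immediate from the $P\leftrightarrow Q$ or $I\leftrightarrow J$ swap structure. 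Then I must check that the auxiliary compositions $M$ (from $\mathcal S_2$), $R$ and $K''$ (from the $(2,2)$, $j_2=4$ subcase), and $V$ (from the trivial Step-1 branch, if any appear there) are pairwise distinct and none equals any $K$ or $\xi(K)$ already consumed, so that every $e_K$-coefficient of $Y_0$ is charged at most once. The distinctness across families comes from reading off the positions of the size-$(b+l)$ suffixes and the small parts $2,3,4$: the families of \cref{prop:f<0.Y0} are already pairwise disjoint, their $\varphi$-images lie in disjoint regions of $\mathcal W$, and the auxiliary compositions $M$, $R$, $K''$ each carry a distinctive leading-part/suffix-size signature; the only near-collision, $R=M$, is precisely the one anticipated and neutralized in \cref{lem:322}.

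The main obstacle I anticipate is this last distinctness bookkeeping rather than any inequality — the inequalities are all $\le$ one-line consequences of $1<r_i\le2$ that have been discharged in the cited lemmas. Concretely, one must verify that the union of all $K$, all $\xi(K)$, and all auxiliary $M,R,K''$ (and any $U,V$ from Step 1) is a \emph{multiset with multiplicity one}, equivalently that the charging map from $\mathcal W$-coefficients of $Y_0$ to the nonnegative buckets is injective. I would organize this by suffix size $l\in\{1,2,3\}$ and leading part $i_1\in\{2,3,\ge4\}$, tabulating for each $(l,i_1)$ exactly which lemma consumes it, and confirm each cell is hit once. Once that table closes, summing all the displayed nonnegative quantities over a complete, non-overlapping list of buckets gives $Y_0=\sum_K f(K)e_K\prod_i(k_i-1)$ as a nonnegative combination of the $e_\lambda$, proving \cref{lem:e+:trinacria1.Y0}.
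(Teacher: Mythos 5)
Your plan reproduces the paper's own three-step progressive-repair scheme and cites the right lemmas, but it rests on one concretely false claim that leaves part of $\mathcal K^-$ unrepaired. In the $(i_1,j_1)=(2,2)$ branch of $\mathcal S_3$ you assert that ``$j_2\in\{3,4\}$ is exhausted'' by size constraints. It is not: here $K=2P2j_2Q$ with $J=2j_2Q\vDash b+3$, so $j_2$ can be as large as $b+1$, and for $b\ge 4$ the compositions with $j_2\ge 5$ form a nonempty family with $F(K)=2(r_{j_2}-2)<0$ by \cref{prop:F.l=3} that your case analysis never touches. (Similarly, $j_2=4$ with $p_1\le 3$ escapes the second bullet of \cref{lem:322}, whose proof of $F(K)+f(R)+F(K'')\ge 0$ assumes $p_1\ge 4$.) These leftovers are exactly the paper's set $\mathcal D_4$; they are handled by the \emph{first} inequality of \cref{lem:322}, namely $F(K)+f(R)>0$ with $R=22Qj_2P$, which holds for all $j_2\ge 3$. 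One must then check that $R\in\mathcal H$ (it is: $l_R=2$ and its leading part is $2$, so $f(R)\ge 0$ by \cref{prop:f<0.Y0}, and $R$ has no suffix of size $b+4$ or $b+5$) and that these $R$ never collide with the compositions $M=S4T$ of \cref{lem:24>=4.i2=23} — they cannot, since the part of $R$ immediately preceding its size-$(b+2)$ suffix is $j_2\ne 4$ when $j_2\ge 5$, and when $j_2=4$ with $p_1\le 3$ the suffix fails $t_1\ge 4$. Without this extra bucket the charging is incomplete.

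A second, smaller defect is organizational. You propose to give \emph{every} $K\in\mathcal S_2$ its own repair term $f(M)$ with $M=S4T$, while separately spending $f(R)$ on the $(2,2)$, $j_2=4$ compositions. When $K=2P24Q$ with $p_1\ge 4$, the composition $R=22Q4P$ \emph{is} the $M$ attached to $K''=422Q\!P\in\mathcal S_2$, so the same coefficient of $Y_0$ would be charged twice. You notice this collision, but the resolution must be structural rather than merely observed: such $K''$ have to be removed from the ``each element of $\mathcal S_2$ gets its own $M$'' bucket and bundled into the triple $F(K)+F(K'')+f(R)\ge 0$ of \cref{lem:322}. This is precisely why the paper splits $\mathcal E$ into $\mathcal E_1,\dots,\mathcal E_5$ and pairs $\mathcal D_3$ with $\mathcal E_2$ under the involution $\xi$ instead of treating $\mathcal S_2$ uniformly. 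With these two amendments — the $\mathcal D_4$ bucket and the $\mathcal D_3$--$\mathcal E_2$ bundling — your argument closes and coincides with the paper's proof; the remaining inequalities and the injectivity of $\varphi$ are indeed discharged by the lemmas you cite.
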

\begin{proof}
For the decomposition in \cref{prop:f<0.Y0}, we further decompose $\mathcal B=\mathcal B_1\sqcup \mathcal B_2$, where
\[
\mathcal B_1=\{K\in\mathcal B\colon j_2\ge 3\}
\quad\text{and}\quad
\mathcal B_2=\{K\in\mathcal B\colon j_2=2\};
\]
decompose $\mathcal C=\mathcal C_1\sqcup\mathcal C_2\sqcup\mathcal C_3$, where
\[
\mathcal C_1
=\{K\in\mathcal C\colon j_1\not\in\{2, 4\}\}, 
\quad
\mathcal C_2
=\{K\in\mathcal C\colon j_1=2\}, 
\quad\text{and}\quad
\mathcal C_3
=\{K\in\mathcal C\colon j_1=4\};
\]
decompose $\mathcal D=\mathcal D_1\sqcup\dots\sqcup\mathcal D_4$, where
\[
\mathcal D_1
=\{K\in\mathcal D\colon j_2=2\}, 
\quad
\mathcal D_2
=\{K\in\mathcal D\colon j_2=3\}, 
\quad
\mathcal D_3
=\{K\in\mathcal D\colon j_2=4,\ i_2\ge 4\}, 
\]
and 
$\mathcal D_4
=\mathcal D\backslash \mathcal D_1\backslash \mathcal D_2\backslash \mathcal D_3$; and decompose 
$\mathcal E=\mathcal E_1\sqcup\dots\sqcup\mathcal E_5$, where
\begin{align*}
\mathcal E_1
&=\{K\in\mathcal E\colon (l_K, i_1, j_1)=(2, 4, 3),\ i_2\ge 3\}, 
\\
\mathcal E_2
&=\{K\in\mathcal E\colon (l_K, i_1, i_2)=(2, 4, 2),\ j_1\ge 4,\ k_3=2\}, 
\\
\mathcal E_3
&=\{K\in\mathcal E\colon (l_K, i_1, i_2)=(2, 4, 2),\ j_1\ge 4,\ k_3\ge 3\},\\
\mathcal E_4
&=\{K\in\mathcal E\colon (l_K, i_1, i_2)=(2, 4, 3),\ j_1\in\{4, 5\}\}, 
\end{align*}
and $\mathcal E_5
=\mathcal E\backslash \mathcal E_1\backslash \dotsm\backslash \mathcal E_4$.
Let 
\[
\mathcal F_1=\mathcal A\cup\mathcal B_1\cup\mathcal C_1\cup\mathcal D_1\cup\mathcal E_5
\quad\text{and}\quad
\mathcal F_2=\mathcal D_4\cup\mathcal E_3\cup\mathcal E_4.
\]
We define the aforementioned involution $\xi$ on $\mathcal K^-$ as follows:
\begin{itemize}
\item
define $\xi(K)=K$ to be the identity map for $K\in\mathcal F_1\cup\mathcal F_2$, 
\item
define $\xi\colon\mathcal C_2\to\mathcal C_2$ by $\xi(3P2Q)=3Q2P$, 
where $(\abs{P}, \abs{Q})=(b+1, \,b+1)$, 
\item
define $\xi\colon\mathcal C_3\to\mathcal E_1$ by $\xi(3P4Q)=4P3Q$, 
where $(\abs{P}, \abs{Q})=(b+1, \,b-1)$ and $p_1\ge 3$, 
\item
define $\xi\colon\mathcal D_2\to\mathcal B_2$ by $\xi(2P23Q)=3P22Q$, 
where $(\abs{P}, \abs{Q})=(b+2, \,b-2)$, 
\item
define $\xi\colon\mathcal D_3\to\mathcal E_2$ by $\xi(2P24Q)=422QP$, 
where $(\abs{P}, \abs{Q})=(b+2, \,b-3)$ and $p_1\ge 4$.
\end{itemize}

Now we follow the steps in our strategy.
For Step 1, let $K\in\mathcal F_1$.
If $l_K=2$, then $F(K)\ge 0$ by \cref{prop:F.l=2}; 
if $l_K=3$, then $F(K)\ge 0$ by \cref{prop:F.l=3}.
For $K\in\mathcal F_2$, let
\[
U=\begin{dcases*}
22Qj_2P, & if $K=2P2j_2Q\in\mathcal D_4$, \\
S4T, & if $K=4ST\in\mathcal E_3\cup \mathcal E_4$.
\end{dcases*}
\]
We shall show that $F(K)+f(U)>0$ and $U\in\mathcal H$.
\begin{itemize}
\item
If $K\in\mathcal D_4$, then 
$F(K)+f(U)>0$ by \cref{lem:322}.
Since $l_U=2$, we find $U\in\mathcal K^+$ by \cref{prop:f<0.Y0}.
Since $\abs{22Q}\le b+1$ and $\abs{22Qj_2}=b+5$, 
we find $U\in\mathcal H$.
\item
If $K\in\mathcal E_3\cup \mathcal E_4$, then
$F(K)+f(U)>0$ by \cref{lem:24>=4.i2=23}.
Since $l_U=2$ and $t_1\ge 4$, we find $U\in\mathcal K^+$ by \cref{prop:f<0.Y0}.
Since $\abs{S}=b+1$ and $\abs{S4}=b+5$, 
we find $U\in\mathcal H$.
\end{itemize}
This verifies Step 1 of our strategy.

For $K\in\mathcal K^-\backslash \mathcal F_1\backslash \mathcal F_2$, let $K'=\xi(K)$.
If $K\in\mathcal C_2\cup\mathcal C_3\cup\mathcal D_2$, 
then $F(K)+F(K')>0$ by \cref{lem:332,lem:334,lem:322}.
For $K\in\mathcal D_3$, we can write $K=2P24Q$, 
where $(\abs{P}, \abs{Q})=(b+2, \,b-3)$ and $p_1\ge 4$.
Let $V=22Q4P$. Then $F(K)+F(K')+f(V)\ge 0$ by \cref{lem:322}.
Since $l_V=2$, we find $f(V)\ge 0$ by \cref{prop:f<0.Y0}.
Since $\abs{22Q}=b+1$ and $\abs{22Q4}=b+5$, we find $V\in\mathcal H$.
This verifies Step 2.

For Step 3, we collect the compositions $U$ and $V$ used above:
\begin{enumerate}
\item
For $K\in\mathcal D_4$, we used the compositions $U=22Qj_2P$ with $(\abs{P},\, \abs{j_2Q})=(b+2, \,b+1)$; 
by the definition of $\mathcal D_4$, we have either $j_2\ge 5$, or ``$j_2=4$ and $p_1\le 3$''.
\item
For $K\in\mathcal E_3\cup\mathcal E_4$, 
we used the compositions $U=S4T$ with $(\abs{S}, \abs{T})=(b+1, \,b+2)$; 
by the definitions of $\mathcal E_3$ and $\mathcal E_4$, 
we have either ``$s_1=2$ and $s_2\ge 3$'' or ``$s_1=3$ and $t_1\in\{4, 5\}$''.
\item
For $K\in\mathcal D_3$, 
we used the compositions $V=22Q4P$ with $(\abs{P}, \abs{Q})=(b+2, \,b-3)$ and $p_1\ge 4$.
\end{enumerate}
It is direct to verify that the compositions $U$ and $V$ above are pairwise distinct.
This checks Step 3 of our strategy, and completes the proof.
\end{proof}

Now we can conclude a proof of \cref{thm:e+.trinacria1}.

\begin{proof}[Proof of \cref{thm:e+.trinacria1}]
Let $G=T_{(b+2)b2}$. 
Recall from \cref{prop:X.3sun:b+2.b.2} that $X_G=Y_2e^2+Y_1e_1+Y_0$.
By \cref{lem:e+:trinacria1.Y2,lem:e+:trinacria1.Y1,lem:e+:trinacria1.Y0},
we obtain the $e$-positivity of $Y_2$, $Y_1$ and $Y_0$, respectively.
Hence $X_G$ is $e$-positive.
\end{proof}

\bibliographystyle{abbrvnat}
\bibliography{csf}

\end{document}